\documentclass[11pt,letterpaper]{article}
\usepackage[margin=1in]{geometry}
\usepackage[utf8]{inputenc}
\usepackage[giveninits=true,url=false,maxcitenames=10,bibstyle=numeric,maxnames=99,minnames=99,sorting=none]{biblatex}
\addbibresource{cite.bib}
\usepackage{float}
\usepackage{amsmath,amsthm,amssymb,mathtools,booktabs}
\usepackage[dvipsnames]{xcolor}
\usepackage[labelfont=normalfont,labelformat=simple]{subcaption}		

\usepackage{hyperref}
\newtheorem*{theorem*}{Theorem}
\newtheorem{theorem}{Theorem}[section]

\pagestyle{empty}

\usepackage{amsmath, amssymb, amsfonts} 

\usepackage{ifthen}

\usepackage{tikz}
\usetikzlibrary{calc}
\usetikzlibrary{positioning}
\usetikzlibrary{shapes}
\usetikzlibrary{patterns}
\usepackage{circuitikz}

\makeatletter
\edef\texforht{TT\noexpand\fi
  \@ifpackageloaded{tex4ht}
    {\noexpand\iftrue}
    {\noexpand\iffalse}}
\makeatother

\makeatletter
\newif\iftikz@node@phantom
\tikzset{
  phantom/.is if=tikz@node@phantom,
  text/.code=%
    \edef\tikz@temp{#1}%
    \ifx\tikz@temp\tikz@nonetext
      \tikz@node@phantomtrue
    \else
      \tikz@node@phantomfalse
      \let\tikz@textcolor\tikz@temp
    \fi
}
\usepackage{etoolbox}
\patchcmd\tikz@fig@continue{\tikz@node@transformations}{%
  \iftikz@node@phantom
    \setbox\pgfnodeparttextbox\hbox{}
  \fi\tikz@node@transformations}{}{}
\makeatother

\newcommand{\tikzAngleOfLine}{\tikz@AngleOfLine}
\def\tikz@AngleOfLine(#1)(#2)#3{%
  \pgfmathanglebetweenpoints{%
    \pgfpointanchor{#1}{center}}{%
    \pgfpointanchor{#2}{center}}
  \pgfmathsetmacro{#3}{\pgfmathresult}%
}

%
%
%

\tikzset{ 
    vertexNodePlain/.style = {fill=none, shape=circle, inner sep=0pt, minimum size=2pt, text=none},
    vertexNodePlain/.default=white,
    vertexPlain/labels/.style = {
        vertexNode/.style={vertexNodePlain=##1},
        vertexLabel/.style={gray}
    },
    vertexPlain/nolabels/.style = {
        vertexNode/.style={vertexNodePlain=##1},
        vertexLabel/.style={text=none}
    },
    vertexPlain/.style = vertexPlain/#1,
    vertexPlain/.default=labels
}
\tikzset{
    vertexNodeNormal/.style = {fill=none, shape=circle, inner sep=0pt, minimum size=4pt, text=none},
    vertexNodeNormal/.default = blue,
    vertexNormal/labels/.style = {
        vertexNode/.style={vertexNodeNormal=##1},
        vertexLabel/.style={blue}
    },
    vertexNormal/nolabels/.style = {
        vertexNode/.style={vertexNodeNormal=##1},
        vertexLabel/.style={text=none}
    },
    vertexNormal/.style = vertexNormal/#1,
    vertexNormal/.default=labels
}
\tikzset{
    vertexNodeBallShading/pdf/.style = {ball color=#1},
    vertexNodeBallShading/svg/.style = {fill=#1},
    vertexNodeBallShading/.code = {
        \if\texforht
            \tikzset{vertexNodeBallShading/svg=white}
        \else
            \tikzset{vertexNodeBallShading/pdf=white}
        \fi
    },
    vertexNodeBall/.style = {shape=circle, vertexNodeBallShading=#1, inner sep=2pt, outer sep=0pt, minimum size=3pt, font=\tiny},
    vertexNodeBall/.default = white,
    vertexBall/labels/.style = {
        vertexNode/.style={vertexNodeBall=##1, text=black},
        vertexLabel/.style={text=none}
    },
    vertexBall/nolabels/.style = {
        vertexNode/.style={vertexNodeBall=##1, text=none},
        vertexLabel/.style={text=none}
    },
    vertexBall/.style = vertexBall/#1,
    vertexBall/.default=labels
}
\tikzset{ 
    vertexStyle/.style={vertexNormal=#1},
    vertexStyle/.default = labels
}

\newcommand{\vertexLabelR}[4][]{
    \ifthenelse{ \equal{#1}{} }
        { \node[vertexNode] at (#2) {#4}; }
        { \node[vertexNode=#1] at (#2) {#4}; }
    \node[vertexLabel, #3] at (#2) {#4};
}
\newcommand{\vertexLabelA}[4][]{
    \ifthenelse{ \equal{#1}{} }
        { \node[vertexNode] at (#2) {#4}; }
        { \node[vertexNode=#1] at (#2) {#4}; }
    \node[vertexLabel] at (#3) {#4};
}

\newcommand{\edgeLabelColor}{blue!20!white}
\tikzset{
    edgeLineNone/.style = {draw=none},
    edgeLineNone/.default=black,
    edgeNone/labels/.style = {
        edge/.style = {edgeLineNone=##1},
        edgeLabel/.style = {fill=\edgeLabelColor,font=\small}
    },
    edgeNone/nolabels/.style = {
        edge/.style = {edgeLineNone=##1},
        edgeLabel/.style = {text=none}
    },
    edgeNone/.style = edgeNone/#1,
    edgeNone/.default = labels
}
\tikzset{
    edgeLinePlain/.style={line join=round, draw=#1},
    edgeLinePlain/.default=black,
    edgePlain/labels/.style = {
        edge/.style={edgeLinePlain=##1},
        edgeLabel/.style={fill=\edgeLabelColor,font=\small}
    },
    edgePlain/nolabels/.style = {
        edge/.style={edgeLinePlain=##1},
        edgeLabel/.style={text=none}
    },
    edgePlain/.style = edgePlain/#1,
    edgePlain/.default = labels
}
\tikzset{
    edgeLineDouble/.style = {very thin, double=#1, double distance=.8pt, line join=round},
    edgeLineDouble/.default=gray!90!white,
    edgeDouble/labels/.style = {
        edge/.style = {edgeLineDouble=##1},
        edgeLabel/.style = {fill=\edgeLabelColor,font=\small}
    },
    edgeDouble/nolabels/.style = {
        edge/.style = {edgeLineDouble=##1},
        edgeLabel/.style = {text=none}
    },
    edgeDouble/.style = edgeDouble/#1,
    edgeDouble/.default = labels
}
\tikzset{
    edgeStyle/.style = {edgePlain=#1},
    edgeStyle/.default = labels
}

%
\newcommand{\faceColorY}{yellow!60!white}   
\newcommand{\faceColorB}{blue!60!white}     
\newcommand{\faceColorC}{cyan!60}           
\newcommand{\faceColorR}{red!60!white}      
\newcommand{\faceColorG}{green!60!white}    
\newcommand{\faceColorO}{orange!50!yellow!70!white} 

\newcommand{\faceColor}{\faceColorY}
\newcommand{\faceColorSwap}{\faceColorC}


\tikzset{
    face/.style = {fill=#1},
    face/.default = \faceColor,
    faceY/.style = {face=\faceColorY},
    faceB/.style = {face=\faceColorB},
    faceC/.style = {face=\faceColorC},
    faceR/.style = {face=\faceColorR},
    faceG/.style = {face=\faceColorG},
    faceO/.style = {face=\faceColorO}
}
\tikzset{
    faceStyle/labels/.style = {
        faceLabel/.style = {}
    },
    faceStyle/nolabels/.style = {
        faceLabel/.style = {text=none}
    },
    faceStyle/.style = faceStyle/#1,
    faceStyle/.default = labels
}
\tikzset{ face/.style={fill=#1} }
\tikzset{ faceSwap/.code=
    \ifdefined\swapColors
        \tikzset{face=\faceColorSwap}
    \else
        \tikzset{face=\faceColor}
    \fi
}

\pagestyle{plain}

\title{Surfaces with given Automorphism Group}
\author{Reymond Akpanya and Tom Goertzen}
\date{\vspace{-5ex}}
\usepackage[colorinlistoftodos]{todonotes}
\setlength{\marginparwidth}{2cm}


\newtheorem{lemma}{Lemma}
\newtheorem{prop}{Proposition}

\newtheorem{definition}{Definition}
\newtheorem{remark}{Remark}
\newtheorem{conjecture}{Conjecture}

\DeclareMathOperator{\Aut}{Aut}

\DeclareMathOperator{\ord}{ord}
\begin{document}

\maketitle


\section{Abstract}

Frucht showed that, for any finite group $G$, there exists a cubic graph such that its automorphism group is isomorphic to $G$. For groups generated by two elements we simplify his construction to a graph with fewer nodes. In the general case, we address an oversight in Frucht’s construction. We prove the existence of cycle double covers of the resulting graphs, leading to simplicial surfaces with given automorphism group. For almost all finite non-abelian simple groups we give alternative constructions based on graphic regular representations. 
In the general cases $C_n,D_n,A_5$ for $n\geq 4$, we provide alternative constructions of simplicial spheres. Furthermore, we embed these surfaces into the Euclidean 3-Space with equilateral triangles such that the automorphism group of the surface and the symmetry group of the corresponding polyhedron in $\mathrm{O}(3)$ are isomorphic.
\section{Introduction}

Combinatorial structures such as graphs and simplicial complexes are ubiquitous in mathematical research. The identification and study of these fundamental structures provides a unifying view of phenomena from a wide range of diverse mathematical disciplines. In particular, cubic graphs have been the focus of many studies in graph theory such as the cycle double cover conjecture, see Section \ref{preliminiaries}. In this paper, we investigate the relationship between cubic graphs and simplicial surfaces, with a focus on their respective automorphism groups. Simplicial surfaces describe the incidence relations of triangulated surfaces and they can be linked to cubic graphs by observing the incidence between faces and edges only.

In particular, we show in Section \ref{simplicial_surface_construction} that a construction by Frucht in \cite{frucht} yielding a cubic graph with given automorphism group leads to the following result:

\begin{theorem*}
    Let $G$ be a finite group generated by a set $S$. There exists a simplicial surface $X_{G,S}$ such that $Aut(X_{G,S})\cong G$.
\end{theorem*}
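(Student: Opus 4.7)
The natural strategy is to pass between cubic graphs and simplicial surfaces via their face--edge incidence structure: every simplicial surface gives rise to a cubic graph, since each triangle meets three edges and each edge meets exactly two triangles, and conversely a cubic graph $\Gamma$ equipped with a \emph{cycle double cover} $\mathcal{C}$ (a family of cycles in which every edge lies in exactly two cycles) encodes a simplicial surface whose triangles and edges correspond to the vertices and edges of $\Gamma$, and whose vertices correspond to the cycles of $\mathcal{C}$. Accordingly, I would take the cubic graph $\Gamma_{G,S}$ produced by Frucht's construction, with the oversight flagged in the introduction repaired, so that $\Aut(\Gamma_{G,S})\cong G$, and then promote this graph to a simplicial surface by choosing an appropriate cycle double cover.

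The central step is to build a cycle double cover $\mathcal{C}$ of $\Gamma_{G,S}$ that is invariant under the full automorphism group $\Aut(\Gamma_{G,S})$. Frucht's graph is highly modular: it consists of a Cayley-type skeleton on $G$, with one arc per generator $s\in S$, decorated by rigid gadgets that kill unwanted symmetries. Two natural families of invariant cycles present themselves: short cycles living inside each gadget, and longer cycles tracing the orbits of the generators on $G$. The task is to combine such cycles so that every edge is covered exactly twice, with each family defined purely in terms of the intrinsic gadget or generator structure, so that $\Aut(\Gamma_{G,S})$-invariance is automatic.

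Given $\mathcal{C}$, I would define $X_{G,S}$ by declaring its triangular faces to be the vertices of $\Gamma_{G,S}$, its edges to be the edges of $\Gamma_{G,S}$, and its vertices to be the cycles in $\mathcal{C}$, with incidences inherited from the graph and the cover. The double-cover condition implies that each edge of $X_{G,S}$ lies on exactly two triangles, and the three edges at any vertex of $\Gamma_{G,S}$ belong to three distinct cycles of $\mathcal{C}$, which yields a well-defined triangle with three distinct vertices. A local check around each cycle shows that $X_{G,S}$ is a bona fide simplicial surface.

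Finally, I would verify $\Aut(X_{G,S})\cong G$ in two inclusions. The inclusion $G\hookrightarrow\Aut(X_{G,S})$ is immediate from the $\Aut(\Gamma_{G,S})$-invariance of $\mathcal{C}$, since the action on the graph then lifts to a simplicial action on $X_{G,S}$. The reverse inclusion $\Aut(X_{G,S})\hookrightarrow\Aut(\Gamma_{G,S})\cong G$ follows because any automorphism of the surface permutes faces and edges while preserving incidence, and so induces an automorphism of the face--edge incidence graph. The main obstacle, in my view, is the middle step: forcing the cycle double cover to be canonical enough that every graph automorphism preserves it. If Frucht's gadgets admit combinatorially distinguishable short cycles this can be handled by defining $\mathcal{C}$ intrinsically (for instance via shortest cycles through each edge together with designated generator-orbit cycles); otherwise a case analysis over the gadget types will be required.
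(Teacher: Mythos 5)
Your overall architecture is exactly the paper's: take the (corrected) Frucht graph $\Gamma_{G,S}$, equip it with a cycle double cover invariant under $\Aut(\Gamma_{G,S})\cong G$, read off the surface whose faces, edges and vertices are the nodes, edges and cycles, and then conclude by the two inclusions $G\hookrightarrow\Aut(X_{G,S})$ (from invariance of the cover) and $\Aut(X_{G,S})\hookrightarrow\Aut(\mathcal{F}(X_{G,S}))\cong G$ (functoriality of the face graph). That frame, and the final two-inclusion argument, are sound.

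The genuine gap is the step you yourself flag as the main obstacle: you never actually produce the invariant cycle double cover, only two candidate strategies (shortest cycles plus generator-orbit cycles, or a case analysis over gadgets) without committing to either or verifying that the result covers every edge exactly twice. This is not a cosmetic omission, since the whole theorem rests on it. The paper closes it with a short, explicit device: the edge set of $\Gamma_{G,S}$ is partitioned into the quadratic forms $Q_{i,j}$ and $R$, and the 3-edge colouring $c$ of \eqref{frucht_edge_colouring} assigns each of these blocks a single colour. The cycle double cover is then the set of all bicoloured alternating cycles (two colour classes form a disjoint union of even cycles, and each edge lies in exactly the two alternating families containing its colour). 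Invariance is automatic because the group acts by $\sigma_m(x_{i,g_k})=x_{i,g_m g_k}$, which fixes each $Q_{i,j}$ and $R$ setwise, hence fixes each colour class, hence permutes the alternating cycles among themselves. If you adopt the 3-edge colouring as your canonical structure instead of ad hoc shortest-cycle selections, your proof goes through verbatim; note also that the resulting surface is generally not vertex-faithful (distinct bicoloured cycles share pairs of edges), which is harmless here but would matter if you wanted a simplicial complex in the classical sense.
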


Furthermore, for cyclic groups and dihedral groups we show the following result in Section \ref{emb}:

\begin{theorem*}
    For $G=C_n$ with $n\geq 3$ or for $G=D_n$ with $n\geq 4$ there exists a simplicial surface $X_G$ with automorphism group isomorphic to $G$ and $X_G$ can be embedded into $\mathbb{R}^3$ with equilateral triangles.
\end{theorem*}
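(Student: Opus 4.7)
The plan is to construct $X_{C_n}$ and $X_{D_n}$ explicitly by decorating a rotationally symmetric simplicial sphere with gadgets. I would first take an equilateral simplicial sphere $B_n$ built from an $n$-antiprism-like triangle band capped at top and bottom (by pyramidal caps for $n \le 5$, or by a telescoping cascade of nested antiprism bands for larger $n$). The base $B_n$ naturally has an automorphism group containing $C_n$ and $D_n$ as subgroups, but usually larger (roughly of order $4n$), including reflection planes through the axis and a top/bottom flip.

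To single out $C_n$, I attach to each of the $n$ upper triangles of the antiprism band a chiral gadget --- for instance a regular tetrahedron bump followed by a second tetrahedron glued to a specific side face of the first, with the choice of face rotated in a fixed chiral sense as one moves around the $n$-fold axis. The chirality kills all reflections, and decorating only the top (leaving the bottom bare) breaks the top/bottom flip, giving $\Aut(X_{C_n}) \cong C_n$. For $D_n$, I instead use a reflection-symmetric gadget (a single centered tetrahedron on each upper triangle), which retains the vertical reflection planes but still breaks the top/bottom flip via the one-sided decoration, giving $\Aut(X_{D_n})\cong D_n$.

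The automorphism analysis then argues that any automorphism of $X_G$ must permute the set of decorated upper triangles (they are combinatorially distinguished from the undecorated lower ones by vertex degree), hence preserves the rotational axis. The induced action on the $n$ gadgets is constrained by each gadget's internal symmetry --- chiral (hence only rotations survive) for $C_n$, reflective (hence one reflection per gadget survives) for $D_n$. The equilateral embedding is then immediate: $B_n$ is equilateral by construction and regular tetrahedra attach equilaterally onto any equilateral triangle face without introducing self-intersection provided the bumps are small enough.

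The main obstacles are twofold. First, for small $n$ the base $B_n$ may have accidental combinatorial symmetries: already for $n = 3$ the antiprism with pyramidal caps is the octahedron, whose automorphism group has order $48 \gg 12$, so either a different base (say a gyroelongated bipyramid) or a more elaborate gadget is needed to guarantee that no extraneous automorphism slips through. Second, for large $n$ the telescoping equilateral cap for $B_n$ requires an explicit case-by-case construction, since a single $n$-pyramidal cap fails to be equilateral once $n \ge 6$; one must carefully verify that the nested antiprism caps close up and remain embedded in $\mathbb{R}^3$ without self-intersection.
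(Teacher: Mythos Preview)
Your strategy is essentially the same as the paper's: build a rotationally symmetric simplicial sphere from an $n$-pyramid and antiprism-type strips, then attach tetrahedral gadgets---a chiral pair per sector for $C_n$, a single reflection-symmetric one per sector for $D_n$---to cut the automorphism group down. The paper's automorphism argument is also the one you sketch: the gadget apices are the only degree-$3$ vertices and the bottom pyramid apex is the unique vertex of degree~$3n$ (resp.~$2n$), so any automorphism must respect the rotational structure, and the chirality or symmetry of the gadgets then forces $C_n$ or $D_n$.

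Where you diverge is in handling the equilateral realization for large $n$. You propose telescoping nested antiprism caps and worry about self-intersection; the paper sidesteps both issues. First, its Definition of \emph{embedding} only requires that adjacent vertices lie at Euclidean distance~$1$---it does not require the image to be injective or the resulting polyhedron to be non-self-intersecting. Second, instead of telescoping, the paper allows the rotation angle to be $\alpha = 2\pi l/n$ for any $l$ coprime to $n$ with $\cos\alpha \le \tfrac12$; for $n \ge 7$ one simply picks $l>1$ and gets a star-polygon-like (possibly self-intersecting) realization of the $n$-pyramid with equilateral faces. This makes the construction uniform in $n$ and eliminates your second obstacle entirely. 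Your telescoping idea would yield a genuinely embedded (non-self-intersecting) polyhedron, which is a stronger conclusion than the paper claims, but at the cost of a more delicate case-by-case construction. Your first obstacle (accidental symmetries for small $n$) is handled in the paper by the specific face structure, which guarantees a unique degree-$3n$ vertex even for $n=3$.
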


 As an example of a cubic graph consider the Petersen graph shown in Figure \ref{petersen}. A natural question that arises from examining such a graph is:
\begin{itemize}
    \item[Q1] \textit{How can we link a cubic graph to a simplicial surface, i.e.\ the combinatorics of triangulations of three dimensional polyhedra?}
\end{itemize}
This question was answered by Szekeres in \cite{szekeres} by interpreting a cubic graph as the graph describing the incidence structure between the faces and edges of a simplicial surface. In this case, the vertices of the surface correspond to the cycles in a so called cycle double cover of the cubic graph, i.e.\ a collection of cycles that passes every edge exactly twice.
Three such cycle double covers of the Petersen graph are given by
{\small \begin{align*}
    \{ (1,5,4,3,2), (1,6,9,7,2), (1,6,8,3,2,7,10,5), (4,9,6,8,10,5), (3,8,10,7,9,4) \}, \\
  \{ (1,5,4,3,2), (1,6,9,7,2), (2,7,10,5,4,9,6,8,3), (1,6,8,10,5), (3,8,10,7,9,4) \}, \\
  \{ (1,5,4,3,2), (1,6,9,7,2), (4,9,7,10,5), (3,8,6,9,4), (1,6,8,10,5), (2,7,10,8,3) \}.
\end{align*}}
Here, the third cycle double cover above corresponds to the surface illustrated in Figure \ref{petersen_surface}.
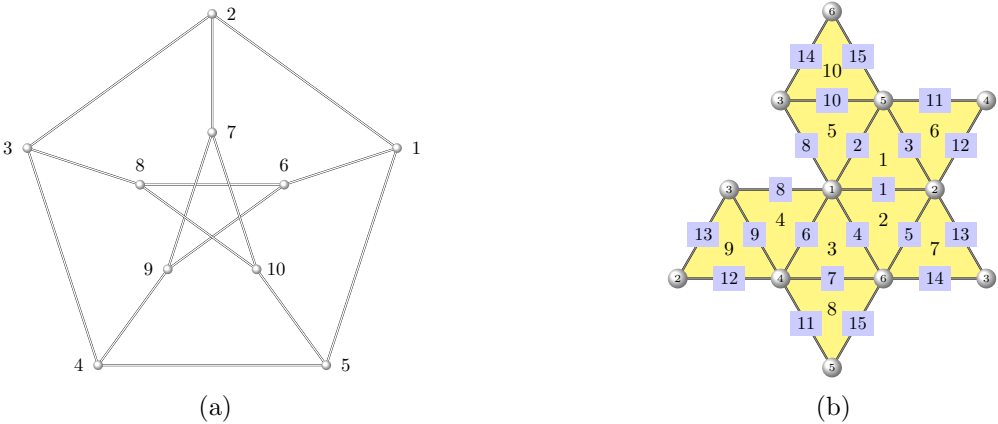
\begin{figure}[H]
\begin{minipage}{.49\textwidth}
    \centering
    \resizebox{!}{5cm}{\begin{tikzpicture}[vertexBall, edgeDouble=nolabels, faceStyle=nolabels, scale=2]

\coordinate (V1) at ( 1.902,0.618 );
\coordinate (V2) at ( 0.,2.);
\coordinate (V3) at ( -1.902,0.618 );
\coordinate (V4) at (-1.174,-1.618 );
\coordinate (V5) at ( 1.174,-1.618);
\coordinate (V6) at ( 0.7416,0.2408727272727273);
\coordinate (V7) at (0.,0.7795636363636363 );
\coordinate (V8) at (-0.7416,0.240872727272727);
\coordinate (V9) at ( -0.4572,-0.6306545454545456 );
\coordinate (V10) at (0.4572,-0.6306545454545456);
\draw[edge=] (V1) -- node[edgeLabel] {$1$} (V2);
\draw[edge=] (V1) -- node[edgeLabel] {$2$} (V5);
\draw[edge=] (V1) -- node[edgeLabel] {$3$} (V6);
\draw[edge=] (V2) -- node[edgeLabel] {$4$} (V3);
\draw[edge=] (V2) -- node[edgeLabel] {$5$} (V7);
\draw[edge=] (V3) -- node[edgeLabel] {$6$} (V4);
\draw[edge=] (V3) -- node[edgeLabel] {$7$} (V8);
\draw[edge=] (V4) -- node[edgeLabel] {$8$} (V5);
\draw[edge=] (V4) -- node[edgeLabel] {$9$} (V9);
\draw[edge=] (V5) -- node[edgeLabel] {$10$} (V10);
\draw[edge=] (V6) -- node[edgeLabel] {$11$} (V8);
\draw[edge=] (V6) -- node[edgeLabel] {$12$} (V9);
\draw[edge=] (V7) -- node[edgeLabel] {$13$} (V9);
\draw[edge=] (V7) -- node[edgeLabel] {$14$} (V10);
\draw[edge=] (V8) -- node[edgeLabel] {$15$} (V10);
\vertexLabelR[]{V1}{left}{$ $}
\vertexLabelR[]{V2}{left}{$ $}
\vertexLabelR[]{V3}{left}{$ $}
\vertexLabelR[]{V4}{left}{$ $}
\vertexLabelR[]{V5}{left}{$ $}
\vertexLabelR[]{V6}{left}{$ $}
\vertexLabelR[]{V7}{left}{$ $}
\vertexLabelR[]{V8}{left}{$ $}
\vertexLabelR[]{V9}{left}{$ $}
\vertexLabelR[]{V10}{left}{$ $}
\node at (1.902+0.2,0.618 ) {$1$};
\node at (0.+0.2,2. ) {$2$};
\node at ( -1.902-0.2,0.618) {$3$};
\node at (-1.174-0.2,-1.618) {$4$};
\node at (1.174+0.2,-1.618) {$5$};
\node at (0.7416,0.2408727272727273+0.2 ) {$6$};
\node at (0.+0.2,0.7795636363636363 ) {$7$};
\node at (-0.7416,0.2408727272727273 +0.2) {$8$};
\node at (-0.4572-0.2,-0.6306545454545456 ) {$9$};
\node at (0.4572+0.2,-0.6306545454545456 ) {$10$};
\end{tikzpicture}}
\subcaption{}
\label{petersen}
\end{minipage}
\begin{minipage}{.49\textwidth}
    \centering
    \resizebox{!}{5cm}{
    \begin{tikzpicture}[vertexBall, edgeDouble, faceStyle, scale=2]

\coordinate (V1_1) at (0., 0.);
\coordinate (V2_1) at (1., 0.);
\coordinate (V2_2) at (-1.5, -0.8660254037844388);
\coordinate (V3_1) at (-0.5, 0.8660254037844384);
\coordinate (V3_2) at (1.5, -0.8660254037844384);
\coordinate (V3_3) at (-1., 0.);
\coordinate (V4_1) at (1.5, 0.8660254037844388);
\coordinate (V4_2) at (-0.4999999999999998, -0.8660254037844388);
\coordinate (V5_1) at (0.4999999999999999, 0.8660254037844386);
\coordinate (V5_2) at (0., -1.732050807568877);
\coordinate (V6_1) at (0.5000000000000001, -0.8660254037844386);
\coordinate (V6_2) at (0., 1.732050807568877);

\fill[face]  (V2_1) -- (V5_1) -- (V1_1) -- cycle;
\node[faceLabel] at (barycentric cs:V2_1=1,V5_1=1,V1_1=1) {$1$};
\fill[face]  (V1_1) -- (V6_1) -- (V2_1) -- cycle;
\node[faceLabel] at (barycentric cs:V1_1=1,V6_1=1,V2_1=1) {$2$};
\fill[face]  (V1_1) -- (V4_2) -- (V6_1) -- cycle;
\node[faceLabel] at (barycentric cs:V1_1=1,V4_2=1,V6_1=1) {$3$};
\fill[face]  (V1_1) -- (V3_3) -- (V4_2) -- cycle;
\node[faceLabel] at (barycentric cs:V1_1=1,V3_3=1,V4_2=1) {$4$};
\fill[face]  (V5_1) -- (V3_1) -- (V1_1) -- cycle;
\node[faceLabel] at (barycentric cs:V5_1=1,V3_1=1,V1_1=1) {$5$};
\fill[face]  (V2_1) -- (V4_1) -- (V5_1) -- cycle;
\node[faceLabel] at (barycentric cs:V2_1=1,V4_1=1,V5_1=1) {$6$};
\fill[face]  (V6_1) -- (V3_2) -- (V2_1) -- cycle;
\node[faceLabel] at (barycentric cs:V6_1=1,V3_2=1,V2_1=1) {$7$};
\fill[face]  (V4_2) -- (V5_2) -- (V6_1) -- cycle;
\node[faceLabel] at (barycentric cs:V4_2=1,V5_2=1,V6_1=1) {$8$};
\fill[face]  (V3_3) -- (V2_2) -- (V4_2) -- cycle;
\node[faceLabel] at (barycentric cs:V3_3=1,V2_2=1,V4_2=1) {$9$};
\fill[face]  (V5_1) -- (V6_2) -- (V3_1) -- cycle;
\node[faceLabel] at (barycentric cs:V5_1=1,V6_2=1,V3_1=1) {$10$};

\draw[edge] (V2_1) -- node[edgeLabel] {$1$} (V1_1);
\draw[edge] (V1_1) -- node[edgeLabel] {$2$} (V5_1);
\draw[edge] (V5_1) -- node[edgeLabel] {$3$} (V2_1);
\draw[edge] (V6_1) -- node[edgeLabel] {$4$} (V1_1);
\draw[edge] (V2_1) -- node[edgeLabel] {$5$} (V6_1);
\draw[edge] (V4_2) -- node[edgeLabel] {$6$} (V1_1);
\draw[edge] (V6_1) -- node[edgeLabel] {$7$} (V4_2);
\draw[edge] (V1_1) -- node[edgeLabel] {$8$} (V3_1);
\draw[edge] (V3_3) -- node[edgeLabel] {$8$} (V1_1);
\draw[edge] (V4_2) -- node[edgeLabel] {$9$} (V3_3);
\draw[edge] (V3_1) -- node[edgeLabel] {$10$} (V5_1);
\draw[edge] (V5_1) -- node[edgeLabel] {$11$} (V4_1);
\draw[edge] (V5_2) -- node[edgeLabel] {$11$} (V4_2);
\draw[edge] (V4_1) -- node[edgeLabel] {$12$} (V2_1);
\draw[edge] (V4_2) -- node[edgeLabel] {$12$} (V2_2);
\draw[edge] (V2_1) -- node[edgeLabel] {$13$} (V3_2);
\draw[edge] (V2_2) -- node[edgeLabel] {$13$} (V3_3);
\draw[edge] (V3_2) -- node[edgeLabel] {$14$} (V6_1);
\draw[edge] (V3_1) -- node[edgeLabel] {$14$} (V6_2);
\draw[edge] (V6_1) -- node[edgeLabel] {$15$} (V5_2);
\draw[edge] (V6_2) -- node[edgeLabel] {$15$} (V5_1);

\vertexLabelR{V1_1}{left}{$1$}
\vertexLabelR{V2_1}{left}{$2$}
\vertexLabelR{V2_2}{left}{$2$}
\vertexLabelR{V3_1}{left}{$3$}
\vertexLabelR{V3_2}{left}{$3$}
\vertexLabelR{V3_3}{left}{$3$}
\vertexLabelR{V4_1}{left}{$4$}
\vertexLabelR{V4_2}{left}{$4$}
\vertexLabelR{V5_1}{left}{$5$}
\vertexLabelR{V5_2}{left}{$5$}
\vertexLabelR{V6_1}{left}{$6$}
\vertexLabelR{V6_2}{left}{$6$}

\end{tikzpicture}
    }
    \subcaption{}
\label{petersen_surface}
\end{minipage}
\caption{(a) Petersen graph (b) Surface with facegraph isomorphic to the Petersen graph}
\end{figure}
If a surface can be constructed from computing a cycle double cover of a given cubic graph, the following question arises:
\begin{itemize}
    \item[Q2]  \textit{What is the resulting automorphism group of the simplicial surface?} 
\end{itemize}
  For the surface in Figure \ref{petersen_surface}, the automorphism group is isomorphic to $A_5$. This automorphism group corresponds to a subgroup of the automorphism group of the Petersen graph which leaves the corresponding cycle double cover invariant. Note, that the automorphism group of a surface always gives rise to a subgroup of the automorphism group of the underlying cubic graph, see Section \ref{surfaces_and_graphs} for a proof. As a next, step we can try to compute an embedding of the simplicial surface with equilateral triangles, i.e.\ realize the surface as a polyhedron built from equilateral triangles in the Euclidean 3-space.
Computing embeddings by solving a system of equations determined by a given simplicial surface turns out to be a task of high complexity.
In \cite{icosahedron} the authors elaborate on the complexity of solving this system of equations by computing all embeddings of a combinatorial icosahedron with equilateral triangles and non-trivial symmetry groups. Figure \ref{icosahedronfigure} shows three different embeddings of the icosahedron constructed from equilateral triangles.
\begin{figure}[H]
    \centering
\includegraphics[height=4cm]{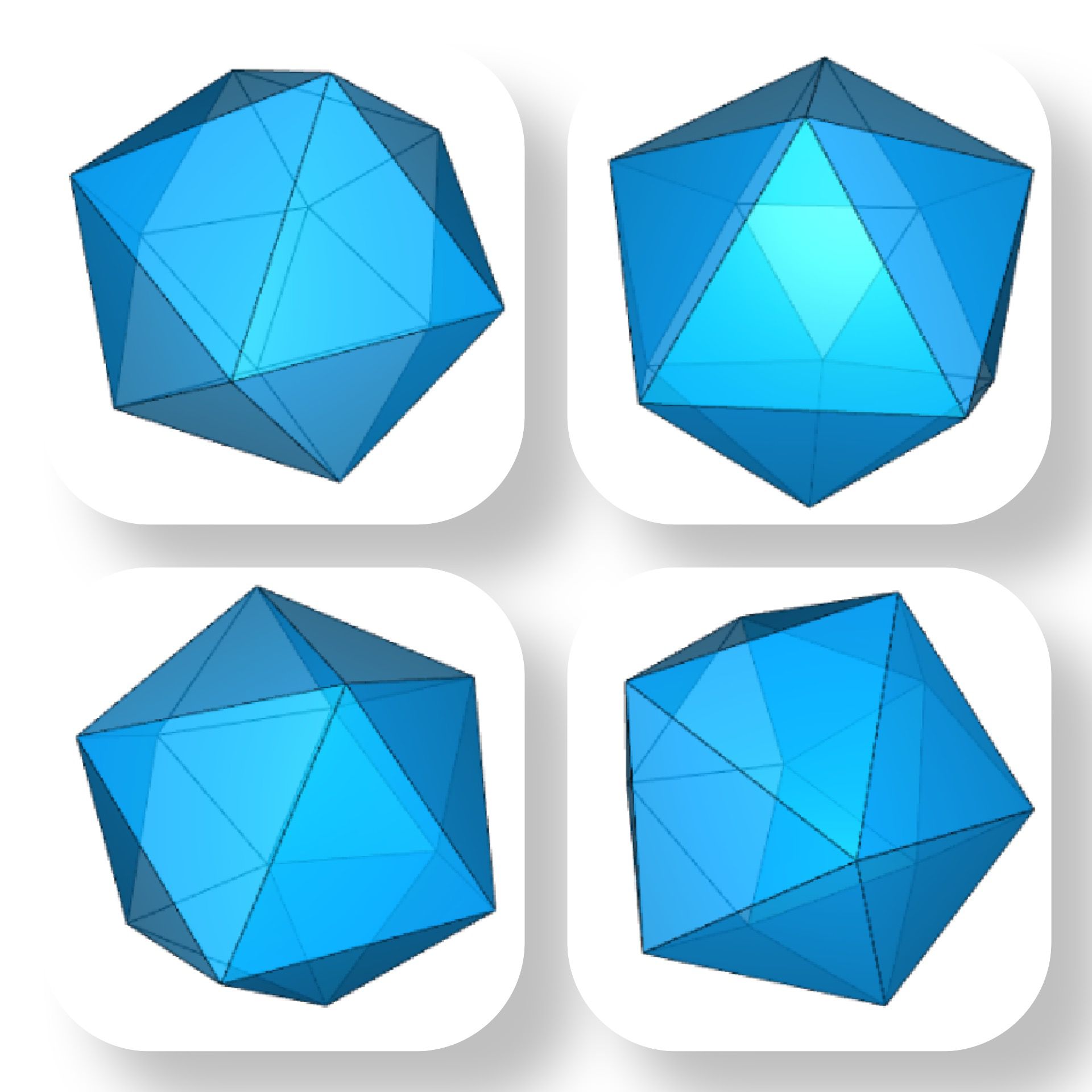}
\includegraphics[height=4cm]{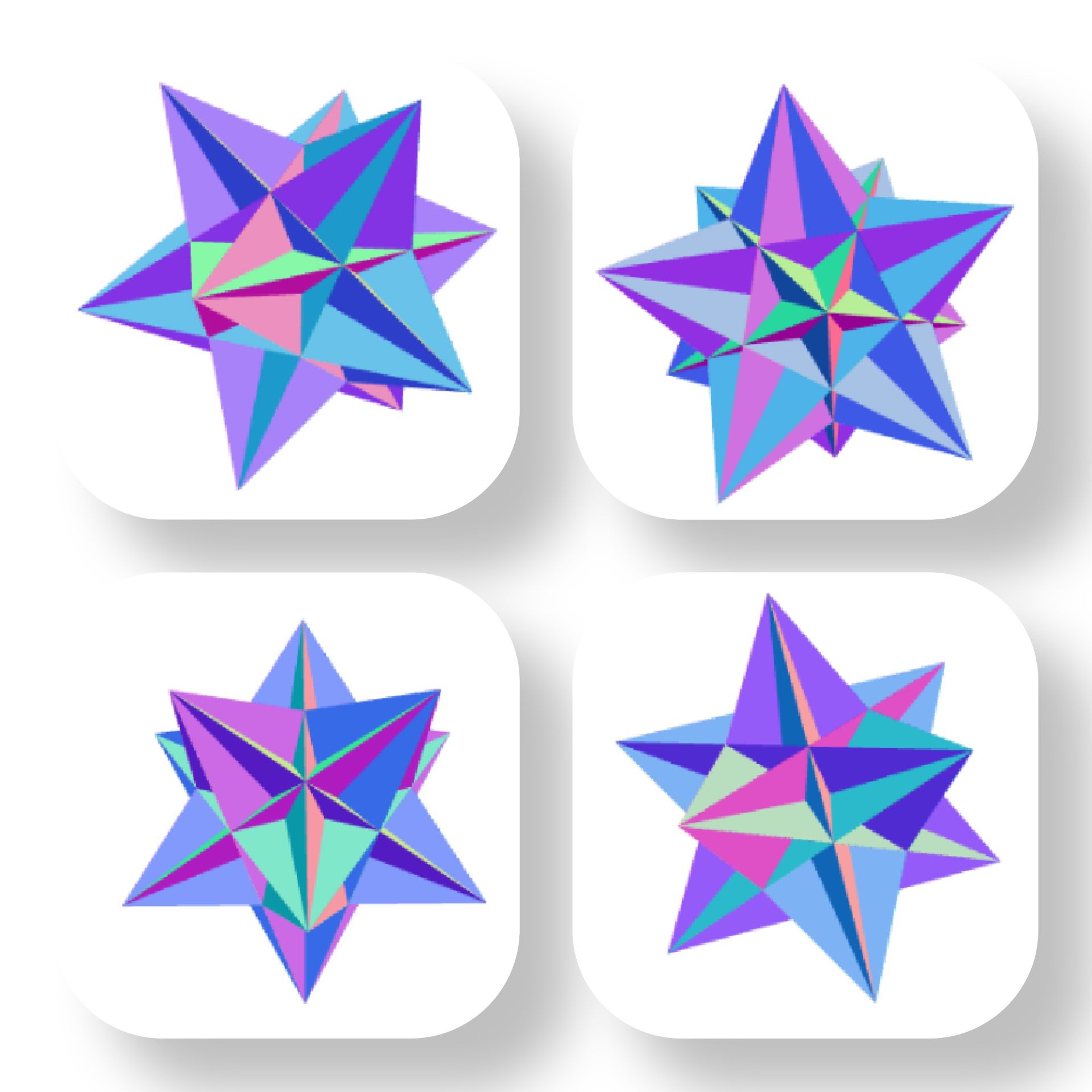}
\includegraphics[height=4cm]{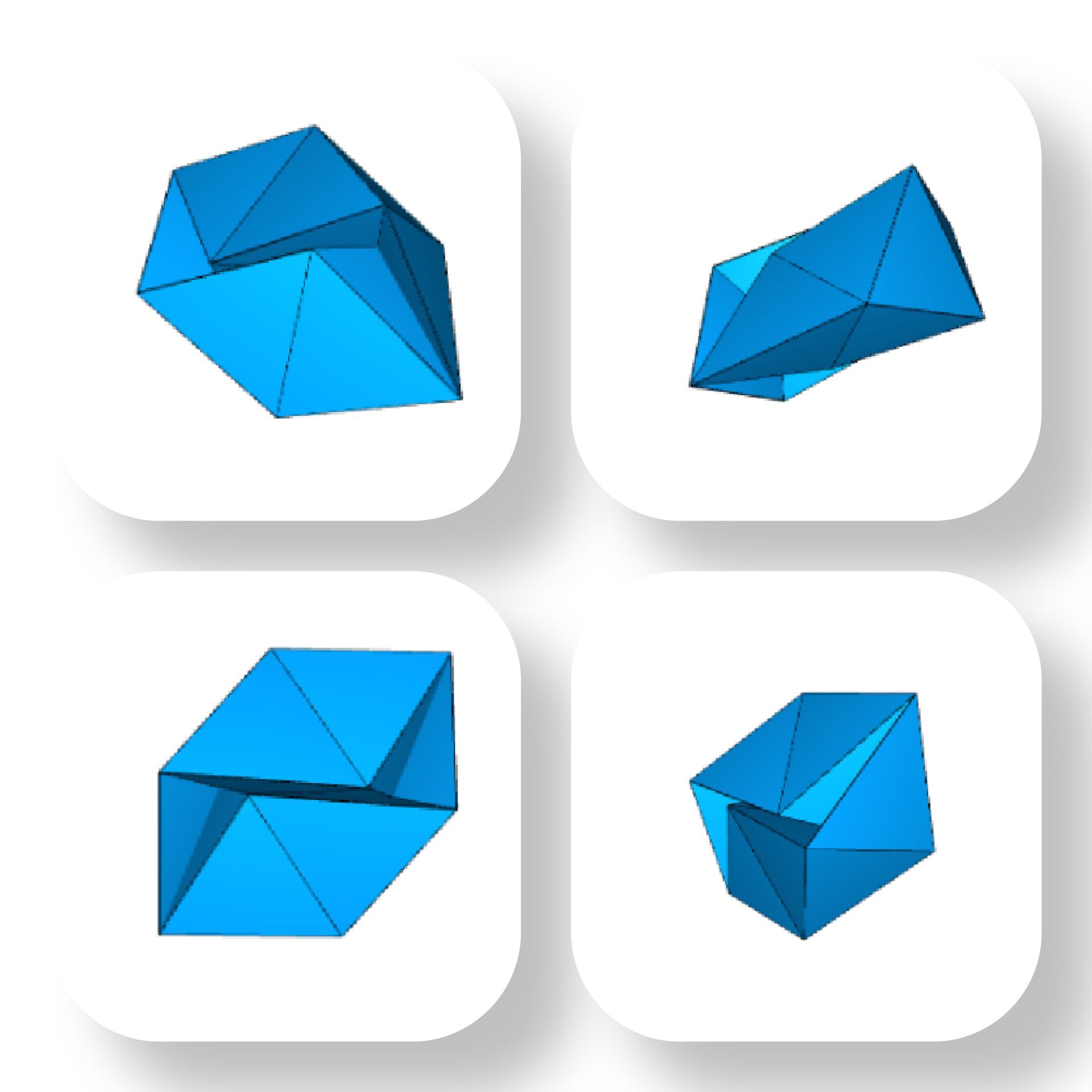}
    \caption{Different embeddings of the icosahedron: platonic solid (left), great icosahedron (middle) and a flexible icosahedron (right)}
    \label{icosahedronfigure}
\end{figure}
The symmetry group of an embedding of a surface is the subgroup in the Euclidean group $\mathrm{E}(3)$ leaving the embedding invariant. Such a symmetry group can be embedded into the group of orthogonal transformation $\mathrm{O}(3)$. Note that in general, the symmetry group of an embedded surface is a subgroup of the automorphism group of the underlying simplicial surface. Here, we aim to compute embeddings of a simplicial surface with a high number of symmetries as described in the following question: 
\begin{itemize}
    \item[Q3] \textit{Given a simplicial surface, can we compute an embedding into $\mathbb{R}^3$ such that the symmetry group of the surface is isomorphic to the automorphism group of the underlying simplicial surface?}
\end{itemize}
In this paper, we elaborate on the translation of cubic graphs with given automorphism group into a simplicial surfaces, analyze corresponding embeddings in some cases and therefore give answers to the questions Q1-Q3 for certain classes of cubic graphs and surfaces.

We show that there exists simplicial surface $X_G$ with $\Aut(X_G)\cong G$ by giving a $G$-invariant cycle double cover of a cubic graph based on Frucht's construction in \cite{frucht} or a cubic graph that forms a vertex transitive graph also known as generalized orbital graph. We define a 3-edge colouring to obtain this cycle double cover by applying the methods given in \cite{szekeres}.
Note, that cubic graphs do not neccesarily admit 3-edge colouring. For instance the Petersen graph, shown in Figure \ref{petersen}, is a well-known example of a vertex-transitive graph that does not admit a 3-edge colouring.
Furthermore, for $G=C_n,D_n,A_5$ we provide a simplicial surface $X_G$ with automorphism group isomorphic to $G$ and $X_G$ can be embedded with equilateral triangles. This is shown by exploiting the structure of the groups $C_n,D_n,A_5$ and their embeddings in $\mathrm{O}(3)$. For the cyclic and dihedral cases, we show in Section \ref{cyclic_embedding} that it suffices to consider group orbits of points in $\mathbb{R}^3$ in order to find embeddings with equilateral triangles.

In Section \ref{preliminiaries}, we introduce the theory of simplicial surfaces and their connections to cubic graphs. Moreover, we observe that a cubic graph has to be bridgeless in order to associate it to a surface. In the following three sections we present the graph constructions that are the focus of this paper. Section \ref{fruchts_graphs} deals with Frucht's cubic graph construction from \cite{frucht} yielding a cubic graph with given automorphism group. Here, we modify the construction for groups with $n>2$ generators, where Frucht missed a case. In Section \ref{cyclic_dihedral_graph} we present the graphs with cyclic automorphism group that arise from Frucht's construction and provide an alternative construction that yields cubic graphs with dihedral automorphism groups. We then introduce the construction of vertex-transitive cubic graphs in Section \ref{vertex-transitive}. In Section \ref{simplicial_surface_construction}, we show that we can associate a surface to each of the cubic graphs given in the previous sections. Finally in Section \ref{emb}, we compute infinite families of surfaces based on the cubic graphs presented in Section \ref{cyclic_dihedral_graph} such that the automorphism groups and symmetry groups are isomorphic of these surfaces. All the graph constructions discussed in this paper, along with the software for the visualization of surfaces and graphs, are implemented in the GAP4 package, SimplicialSurfaces \cite{simplicialsurfacegap}. We verify the automorphism groups of the given examples using the algorithms provided in \cite{MCKAY201494}, which are implemented in GAP4 \cite{GAP4}.

\section{Preliminaries} \label{preliminiaries}

In this section, we introduce the basic notion on simplicial surfaces, their relation to cubic graphs and their embeddings into $\mathbb{R}^3$.

\subsection{Simplicial Surfaces}
SSimplicial surfaces describe the incidence structure of triangulated surfaces. Compared to simplicial complex, where each element is uniquely described by its corresponding vertices, the definition of a simplicial surface below allows different faces and edges to have the same vertices by introducing an incidence relation between vertices, edges and faces. The following definitions are based on the work presented in \cite{simplicialsurfacesbook}.

\begin{definition} \label{simpl_surface}
A (closed) \emph{simplicial surface} $(X,<)$ is a countable set $X$ partitioned into non-empty sets $X_0$, $X_1$, and $X_2$
such that the relation $<$, called the \emph{incidence}, is a subset of the union  $X_0\! \times\! X_1\: \cup\: X_1\! \times\! X_2\: \cup \:X_0 \!\times\! X_2$, satisfying the following conditions.

\begin{enumerate}
\item
For each edge $e \in X_1$ there are exactly two vertices $V \in X_0$ with $V < e$.

\item
For each face $F \in X_2$ there are exactly three edges $e \in X_1$ with $e < F$ and three vertices $V \in X_0$ with $V < F$. Moreover, any of these three vertices is incident to exactly two of these three edges.

\item
For any edge $e \in X_1$ there are exactly two faces $F \in X_2$ with $e < F$. 

\item Umbrella condition: 
For any vertex $V \in X_0$, the number $n=\deg(V)$ of faces $F_i \in X_2$ with $V < F_i$ satisfies $3\leq \deg(V) <\infty$ and is called the \emph{degree} of the vertex $V$. The $F_i$ can be arranged in a sequence $(F_1, \ldots , F_n)$ such that $F_{i+1}$ and $F_i$ share a common edge $e_i$ with $V<e_i$ for $i=1, \ldots , n$ and we set $F_{n+1}=F_1$. This sequence can be viewed as a cycle $(F_1,\ldots ,F_n)$ called the \emph{umbrella} of $V$.
\end{enumerate}
We call the elements of $X_0$, $X_1$ and $X_2$, vertices, edges and faces, respectively.

\end{definition}

The above definition requires each edge to be incident to two faces. We can relax the definition by differentiating between closed surfaces, where each edge is incident to two faces and open surfaces, containing an edge which is only incident to one face. Furthermore, we can generalize the definition by allowing vertices with degree $2$.
From now on, when we speak of surfaces we mean simplicial surfaces in the sense above and omit the incidence relation, whenever it is clear from the context. The definition of simplicial surfaces above allows that two distinct edges share common vertices and  we reintroduce conventional simplicial complexes by introducing vertex-faithful simplicial surfaces as defined below.

\begin{definition}
A surface $X$ is called \emph{vertex-faithful} if its edges and faces are uniquely described by its incident vertices, i.e.\ the following map is injective
$$ X\to P(X_0)\coloneqq\{S\subseteq X_0 \},\,x\mapsto X_0(x)\coloneqq \{V\in X_0 \mid V<x \text{ or }V=x \}.$$    
\end{definition}

The image of the above map always yields a simplicial complex in the conventional way.

 We define properties such as orientability and the Euler characteristic in the usual sense.

\begin{definition}
For a simplicial surface $X$ we can compute its \emph{Euler characteristic} $\chi(X)$ as
$$ \chi(X)=|X_0|+|X_2|-|X_1|.$$
\end{definition}

For example, a simplicial sphere has Euler characteristic $2$ and a simplicial torus has Euler characteristic $0$.

\begin{definition}
    A \emph{homomorphism} between two simplicial surfaces $(X,<_X),(Y,<_Y)$ is a map $\pi:X\to Y$ satisfying the following two conditions.
\begin{enumerate}
    \item For $A,B\in X$ with $A<_X B$ we have that $\pi(A)<_Y\pi(B)$.
    \item For every face $F\in X_2$ the restriction of $\pi$ to the vertices and edges incident to $F$ is an isomorphism onto the set of vertices and edges that are incident to $\pi(F)$.
\end{enumerate}

Auto-, mono- and epimorphisms are defined in the usual way and we can define the category of simplicial surfaces.
\end{definition}

By observing that the degree of a vertex is invariant under an isomorphism of a simplicial surface, we can define invariants of isomorphic surfaces. One is the so-called \emph{vertex-counter} of a surface $X$ which counts the degrees of the vertices and is defined as a polynomial in the indeterminates $\{v_n\mid n\geq 3\}$, i.e.\
$$ \prod_{v\in X_0} v_{\deg_X(v)}.$$

\subsection{Simplicial Surfaces and Graphs} \label{surfaces_and_graphs}

There are several ways of linking a graph $\Gamma$ to a surface $X$. Next, we present some of these graphs based on partial incidence structures of a given simplicial surface, see also \cite{simplicialsurfacesbook}. In the rest of the paper, we refer to the vertices of a graph as nodes in order to distinguish between vertices of a surface and those of a graph.

\begin{definition}
    Let $(X,<)$ be a simplicial surface.
    \begin{enumerate} 
    
    \item The \emph{incidence graph}, denoted by $\mathcal{I}(X)$, of a surface $X$ has nodes $X$ and there exists an edge between two nodes $A,B\in X$, whenever we have $A<B$ in $X$.
    \item The \emph{vertex(-edge) graph} of $X$, denoted by $\mathcal{V}(X)$, has nodes $X_0$ and edges $X_1$ such that an edge $e\in X_1$ connects the two vertices in $X_0(e)$.
    \item The \emph{face(-edge) graph} of $X$, denoted by $\mathcal{F}(X)$, has nodes $X_2$ and edges $X_1$ such that an edge $e\in X_1$ connects the two faces in $X_2(e)=\{F \in X_2 \mid e<F \}$.
    \end{enumerate}
\end{definition}

Since each face of a (closed) surface $X$ has three edges, the face graph $\mathcal{F}(X)$ is cubic. 
Normally, mostly (2), is used to link a graph to a given surface, see for instance \cite{szekeres}. Here, we primarily make use of the graph in (3) as it gives us a strong connection between the theory of cubic graphs and simplicial surfaces.
Note that, the umbrella condition in Definition \ref{simpl_surface} enforces the face graph $\mathcal{F}(X)$ of a surface $X$ to be bridgeless, since each edge must lie on a cycle that is induced by the umbrella of a vertex $V\in X$. Furthermore, it is straightforward to show that a vertex-faithful surface yields a 3-connected graph. The following lemma demonstrates our key tool in establishing a connection between the theory of simplicial surfaces and cubic graphs.

\begin{lemma}
$\mathcal{F}(\cdot)$ is a functor between the category of (closed) simplicial surfaces and the category of bridgeless cubic graphs.
\end{lemma}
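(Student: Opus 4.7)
The plan is to verify three things in turn: (i) for every simplicial surface $X$ the face graph $\mathcal{F}(X)$ is a bridgeless cubic graph, (ii) every homomorphism $\pi\colon X\to Y$ of simplicial surfaces induces a graph homomorphism $\mathcal{F}(\pi)\colon \mathcal{F}(X)\to\mathcal{F}(Y)$, and (iii) this assignment preserves identities and composition. Steps (i) and (iii) are short; the technical content of the lemma really sits in (ii), and the one place that requires care is verifying that $\pi$ restricts to maps $X_1\to Y_1$ and $X_2\to Y_2$.

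For (i), cubicness is immediate from condition 2 of Definition \ref{simpl_surface}: every $F\in X_2$ is incident to exactly three edges, so every node of $\mathcal{F}(X)$ has degree three (with multiplicity coming from parallel edges allowed). For the bridgeless property I would argue, as the text already hints, by exhibiting a cycle through every edge. Given $e\in X_1$, pick an incident vertex $V\in X_0$ (which exists by condition 1). The umbrella of $V$ from condition 4 yields a sequence $(F_1,\ldots,F_n)$ of pairwise-distinct faces containing $V$, with each consecutive pair $F_i,F_{i+1}$ sharing an edge $e_i$ incident to $V$ (indices mod $n$). Condition 3 forces $e$ to be incident to exactly two faces of $X$, both containing $V$, so these two faces are consecutive in the umbrella; hence $e=e_i$ for some $i$, and the umbrella produces a closed walk in $\mathcal{F}(X)$ that uses the edge $e$ exactly once. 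Therefore $e$ lies on a cycle and is not a bridge.

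For (ii), I would define $\mathcal{F}(\pi)$ on nodes by $F\mapsto \pi(F)$ for $F\in X_2$ and on edges by $e\mapsto\pi(e)$ for $e\in X_1$. The critical check is that $\pi(X_2)\subseteq Y_2$ and $\pi(X_1)\subseteq Y_1$. For a face $F\in X_2$, condition 2 of the homomorphism definition asserts that $\pi$ restricted to the vertices and edges incident to $F$ is an isomorphism onto the vertices and edges incident to $\pi(F)$; since $F$ has three incident edges and three incident vertices while elements of $Y_0$ have none below and elements of $Y_1$ have only two vertices below, this forces $\pi(F)\in Y_2$. For an edge $e\in X_1$, condition 3 of Definition \ref{simpl_surface} gives a face $F\in X_2$ with $e<F$, and the same isomorphism sends the three edges incident to $F$ onto the three edges incident to $\pi(F)\in Y_2$, so $\pi(e)\in Y_1$. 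Incidence preservation in $\mathcal{F}$ then follows directly from condition 1 of the homomorphism: if $e$ joins $F_1,F_2$ in $\mathcal{F}(X)$, then $e<_X F_i$ gives $\pi(e)<_Y \pi(F_i)$, so $\pi(e)$ joins $\pi(F_1)$ and $\pi(F_2)$ in $\mathcal{F}(Y)$.

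Finally, (iii) is formal: $\mathcal{F}(\mathrm{id}_X)$ is the pair of identity maps on $X_1$ and $X_2$, and for composable $\pi,\sigma$ the map $\mathcal{F}(\sigma\circ\pi)$ is the restriction of $\sigma\circ\pi$ to $X_1\cup X_2$, which coincides with $\mathcal{F}(\sigma)\circ\mathcal{F}(\pi)$ by construction. The main obstacle is conceptual rather than combinatorial: one must make sure that the morphism axioms in Definition \ref{simpl_surface} are strong enough to guarantee type preservation $X_i\to Y_i$ for $i=1,2$, which is what I handle above using the counting forced by the simplicial-surface axioms.
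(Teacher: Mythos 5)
Your proof is correct and follows essentially the same route as the paper: the paper's own proof is just the observation that restricting $\pi$ to $X_1\cup X_2$ yields a graph homomorphism (with cubicness and bridgelessness via the umbrella condition handled in the surrounding discussion, exactly as in your step (i)). Your additional verification that the homomorphism axioms force $\pi(X_2)\subseteq Y_2$ and $\pi(X_1)\subseteq Y_1$ is a detail the paper silently assumes, and your argument for it is sound.
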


\begin{proof}
Let $X,Y$ be two simplicial surface and $\pi:X\to Y$ a homomorphism between them. By restricting $\pi$ to $X_1\cup X_2$ we get a map $\pi|_{X_1\cup X_2}:\mathcal{F}(X)\to \mathcal{F}(Y)$ which is a graph homomorphism since for $e\in X_1, f \in X_2$ with $e<f$ we have that $\pi(e)<\pi(f)$.
\end{proof}

Since an automorphism of a simplicial surface yields an automorphism of its face graph we obtain the following result.

\begin{lemma}
    For a simplicial surface $X$ we have that $\Aut(X)\hookrightarrow \Aut(\mathcal{F}(X))$. Moreover, we have $\Aut(X)\cong \Aut(\mathcal{F}(X))$ if and only if for all $\pi \in \Aut(\mathcal{F}(X))$ and all vertices $v\in X_0$, the isomorphism $\pi$ maps the umbrella $U=(F_1,\dots,F_n)$ at $v$ onto an umbrella of another vertex of $X$ via $\pi(U)=\pi((F_1,\dots,F_n))=(\pi(F_1),\dots,\pi(F_n))$. Thus, each automorphism of $\mathcal{F}(X)$ can uniquely be extended to an automorphism of $X$. 
\end{lemma}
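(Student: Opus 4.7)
The proof naturally splits into an injectivity statement and an iff-characterisation.

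For the embedding $\Aut(X)\hookrightarrow \Aut(\mathcal{F}(X))$, the preceding functoriality lemma already produces a homomorphism $\phi \mapsto \phi|_{X_1\cup X_2}$, so only injectivity is at stake. Suppose $\phi_1,\phi_2\in \Aut(X)$ agree on $X_1\cup X_2$, and let $v\in X_0$ be arbitrary. Pick any face $F\in X_2$ with $v<F$ and let $e_1,e_2$ be the two edges of $F$ incident to $v$ (they exist by condition (2) of Definition \ref{simpl_surface}). By the homomorphism condition (2), each $\phi_i$ restricts to an isomorphism of the vertex/edge substructure of $F$ onto that of $\phi_i(F)$. Since $\phi_1(F)=\phi_2(F)$ and $\phi_1(e_j)=\phi_2(e_j)$ for $j=1,2$, the image $\phi_i(v)$ is in both cases the unique vertex of $\phi_i(F)$ incident to both $\phi_i(e_1)$ and $\phi_i(e_2)$; hence $\phi_1(v)=\phi_2(v)$.

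For the forward direction of the iff, assume $\Aut(X)\cong \Aut(\mathcal{F}(X))$ via restriction, so every $\pi\in\Aut(\mathcal{F}(X))$ lifts to $\tilde{\pi}\in\Aut(X)$ with $\tilde{\pi}|_{X_1\cup X_2}=\pi$. Being a surface automorphism, $\tilde{\pi}$ sends the umbrella $(F_1,\dots,F_n)$ at $v$ to the umbrella at $\tilde{\pi}(v)$, since the edges $e_i$ with $v<e_i$ go to edges with $\tilde{\pi}(v)<\tilde{\pi}(e_i)$, preserving the cyclic gluing structure. Reading off the face entries yields $\pi(U_v)=U_{\tilde{\pi}(v)}$, which is an umbrella.

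For the backward direction, assume that for every $\pi\in\Aut(\mathcal{F}(X))$ and every $v\in X_0$ the tuple $\pi(U_v)$ is an umbrella of $X$. First note that an umbrella $(F_1,\dots,F_n)$ determines its centre vertex uniquely: the $n$ connecting edges $e_1,\dots,e_n$ are pairwise distinct (they exhaust the edges at $v$, which number $\deg(v)=n$), and two distinct edges share at most one vertex, so $v$ is the unique common endpoint of $e_1,\dots,e_n$. Hence we may define $\tilde{\pi}(v)$ to be the unique vertex whose umbrella is $\pi(U_v)$, and set $\tilde{\pi}=\pi$ on $X_1\cup X_2$. The map $\tilde{\pi}$ is bijective because the same construction applied to $\pi^{-1}$ yields a two-sided inverse. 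To verify that $\tilde{\pi}$ is a surface homomorphism, observe that $v<e$ implies $e$ appears in $U_v$, so $\pi(e)$ appears in $\pi(U_v)=U_{\tilde{\pi}(v)}$, giving $\tilde{\pi}(v)<\tilde{\pi}(e)$; similarly for $v<F$; the relation $e<F$ is preserved because $\pi$ is a graph automorphism of $\mathcal{F}(X)$. Finally, condition (2) of the homomorphism definition is inherited from the local structure: the two edges of $F$ incident to $v$ are precisely the two umbrella-edges of $v$ adjacent to the position of $F$ in $U_v$, and $\pi$ transports this incidence pattern exactly onto $\pi(F)$ at $\tilde{\pi}(v)$. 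Uniqueness of the extension follows from the injectivity proved in Part 1.

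The main subtlety, and the step I expect to take the most care, is the well-definedness argument at the start of the backward direction: one must rule out the abstract possibility that multiple vertices share the same umbrella, which is where the distinctness of the connecting edges $e_1,\dots,e_n$ and the two-vertex property of edges are combined. Everything else is a routine chase of incidences through the definition.
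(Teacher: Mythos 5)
The paper states this lemma without any proof (it follows the bare remark that a surface automorphism induces a face-graph automorphism), so your write-up supplies an argument the authors omitted. Your overall architecture is sound: injectivity of the restriction map via the observation that, inside a single face, a vertex is pinned down by the pair of face-edges it meets (which does follow correctly from condition (2) of Definition \ref{simpl_surface}); the forward implication by transporting umbrellas along a lift; and the backward implication by reconstructing $\tilde{\pi}$ on $X_0$ from the images of umbrellas.

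There is, however, one step whose justification is wrong in the generality this paper works in. In the well-definedness argument you assert that ``two distinct edges share at most one vertex.'' That is exactly the vertex-faithfulness condition, which the authors explicitly do \emph{not} assume: the paper stresses that distinct edges may have identical vertex sets, and the surfaces produced from Frucht's graphs via 3-edge colourings are stated to be non-vertex-faithful, so the lemma must hold without that hypothesis. The conclusion you want --- that an umbrella determines its centre --- is still true, but it needs a local argument instead: if the two edges $e,e'$ of a face $F$ incident to $v$ were both also incident to a common vertex $w\neq v$, then $e$ and $e'$ would each already carry their full complement of two vertices, namely $v$ and $w$, so the third vertex of $F$ could be incident to at most one of the three edges of $F$, contradicting condition (2). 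Hence consecutive connecting edges of the umbrella at $v$ have distinct second endpoints, and since $\deg(v)\geq 3$ there are at least two connecting edges, making $v$ the unique vertex common to all of them. With that substitution your proof goes through; I would also note (without demanding a fix) that in the non-vertex-faithful setting the face graph can be a multigraph, so strictly speaking one should check that $\pi$ carries the connecting edges, and not merely the face sequence, of $U_v$ onto those of the target umbrella.
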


Hence, the task of constructing a simplicial surface $X$ with given group $G$ as automorphism group can be reduced to finding a cubic graph $\Gamma$ with automorphism group isomorphic to $G$ and a surface $X$ with $\mathcal{F}(X)=\Gamma$ such that $\Aut(X)\cong \Aut(\mathcal{F}(X))$. In \cite{szekeres}, the author describes how to compute $\mathcal{F}^{-1}(\Gamma)$ for a cubic graph $\Gamma$, i.e.\, the simplicial surfaces with a given face graph. This can be achieved by computing a cycle double cover.

\begin{definition}
A \emph{cycle double cover} or \emph{polyhedral decomposition} (see \cite{szekeres}) of a cubic graph $\Gamma$ consists of a collection of cycles $(C_i)_{i\in I}$ such that for each node in $\Gamma$ there are exactly three cycles passing this node. This is equivalent to saying that there are exactly two cycles traversing each edge.  
\end{definition}

So a simplicial surface can be recovered from its face graph with a cycle double cover, where a cycle $C_i$ corresponds to the umbrella of a vertex of the  surface. It follows that a simplicial surface $X$ is vertex-faithful if and only if no two cycles of its corresponding cycle double cover share two common edges.
It is still an open problem, whether all cubic bridgeless graphs admit a cycle double cover.

\begin{conjecture}[Cycle double cover conjecture] \label{cdc_conjecture}
In \cite{szekeres}, the author conjectures that any cubic graph containing no bridges has a cycle double cover or equivalently is a face graph of a simplicial surface. Moreover, it is conjectured that any bridgeless graph admits a cycle double cover, see \cite{JAEGER19851}.
\end{conjecture}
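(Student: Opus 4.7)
The statement here is the famous Cycle Double Cover conjecture, which has been open since the 1970s; there is no known proof, so what follows is a plan for the standard partial reductions rather than a genuine proof sketch. First, I would dispose of the easy case: if a bridgeless cubic graph $\Gamma$ admits a proper $3$-edge-colouring $c:E(\Gamma)\to\{1,2,3\}$, then for each pair $\{i,j\}\subset\{1,2,3\}$ the subgraph spanned by edges of colours $i$ and $j$ is $2$-regular and therefore a disjoint union of cycles; the three such pairs together contribute exactly three cycles through every node, giving a CDC. The obstruction is therefore the class of \emph{snarks}, i.e.\ bridgeless cubic graphs of chromatic index $4$, of which the Petersen graph in Figure \ref{petersen} is the smallest example.

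Second, I would reduce to a manageable subclass of snarks. A standard observation is that a $2$-edge-cut allows the two sides to be covered separately and their cycles concatenated along the cut, and a $3$-edge-cut allows $\Gamma$ to be split into two smaller cubic graphs whose covers can be stitched together provided the three edges of the cut are traversed compatibly. Iterating these reductions brings the conjecture down to cyclically $4$-edge-connected snarks. Third, one can translate the problem into the language of nowhere-zero flows: any graph admitting a nowhere-zero $4$-flow carries a CDC (the flow values yield a pair of even subgraphs whose symmetric difference is the whole edge set), so the statement would follow from Tutte's $5$-flow conjecture. Alternatively, one may pursue the strong embedding conjecture, in which the sought cycles are the face boundaries of a closed $2$-cell embedding into some orientable surface, and use the topology of that surface to organise the cover.

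The main obstacle is exactly that none of these reductions actually settle the general case: there is no known combinatorial operation that converts an arbitrary cyclically $4$-edge-connected snark into a strictly smaller graph in a way compatible with producing a CDC, and the flow conjectures involved are themselves open. Consequently, I would not attempt a new proof but state this only as a conjecture. For the purposes of the present paper it suffices to exhibit explicit CDCs for the specific cubic graphs arising from Frucht's construction, as is done in Section \ref{simplicial_surface_construction} by producing $G$-invariant $3$-edge-colourings of those graphs and taking the three bi-chromatic $2$-factors as the cycles of the cover; this bypasses the general conjecture entirely for the families treated in this paper.
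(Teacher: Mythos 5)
You have correctly recognised that this statement is the open Cycle Double Cover Conjecture, for which the paper offers no proof and which it only records as a conjecture; your summary of the standard partial reductions (the $3$-edge-colourable case, reduction to cyclically $4$-edge-connected snarks, nowhere-zero flows) is accurate and your conclusion matches the paper's stance exactly. In particular, you correctly identify that the paper sidesteps the conjecture entirely by exhibiting explicit, group-invariant cycle double covers (mostly via Tait colourings and alternating bi-chromatic cycles) for each specific family of cubic graphs it constructs, which is all that is needed for its results.
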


For the graphs presented in the following sections, we always give a cycle double cover. One of the main tools for finding cycle double covers is the following observation for 3-edge colourable cubic graphs.

\begin{remark}
    In \cite{szekeres}, the author shows that a 3-edge colouring, also called \emph{Tait colouring}, of a cubic graph leads to a cycle double cover. Any cycle of the double cover is obtained by choosing two colour classes and alternating between them. The resulting surface, has a \emph{Gr\"unbaum colouring} corresponding to this 3-edge colouring.
\end{remark}

  To show Conjecture \ref{cdc_conjecture} it suffices to consider cubic graphs that do not admit a 3-edge colouring, see \cite{JAEGER19851}. Such graphs are known as Snarks in the literature. The Petersen graph and the graph shown in Figure \ref{a_5_graph} both are examples of such graphs. Note that the Petersen graph can be obtained from the graph in Figure \ref{a_5_graph} by contracting three cycles into one node. On the other hand, one can subdivide each face into three faces of the surface given in Figure \ref{petersen_surface} to obtain a surface with face graph shown in Figure \ref{a_5_graph}.

\begin{figure}[H]
\begin{minipage}{\textwidth}
    \centering
    \resizebox{!}{7cm}{\begin{tikzpicture}[vertexBall, edgeDouble=nolabels, faceStyle=nolabels, scale=1]

\coordinate (V1) at (4.045 , 2.935);
\coordinate (V2) at (-0.1748420622902369 , -0.7357691168895439);
\coordinate (V3) at (-1.729960457502944 , -0.5616977212668609);
\coordinate (V4) at (-1.369909617149586 , -0.4453090771813964);
\coordinate (V5) at (-0.5744810618107785 , 0.4933941411439507);
\coordinate (V6) at (5. , 0.);
\coordinate (V7) at (1.545 , 4.755);
\coordinate (V8) at (0.646606599730662 , -0.3943890453940057);
\coordinate (V9) at (0. , -1.439383005716221);
\coordinate (V10) at (0. , -1.818480065000847);
\coordinate (V11) at (-0.646606599730662 , -0.3943890453940057);
\coordinate (V12) at (-1.545 , 4.755);
\coordinate (V13) at (1.069166739496281 , 1.470937753767285);
\coordinate (V14) at (-4.045 , -2.935);
\coordinate (V15) at (0.7546549248225898 , -0.0617625526108966);
\coordinate (V16) at (-0.2915906744286138 , 0.6985265737504954);
\coordinate (V17) at (-1.545 , -4.755);
\coordinate (V18) at (0.8466668331343571 , 1.165000580039507);
\coordinate (V19) at (-0.8466668331343571 , 1.165000580039507);
\coordinate (V20) at (4.045 , -2.935);
\coordinate (V21) at (0.2915906744286138 , 0.6985265737504954);
\coordinate (V22) at (-0.7546549248225898 , -0.0617625526108966);
\coordinate (V23) at (1.545 , -4.755);
\coordinate (V24) at (-1.069166739496281 , 1.470937753767285);
\coordinate (V25) at (-5. , 0.);
\coordinate (V26) at (0.1748420622902369 , -0.7357691168895439);
\coordinate (V27) at (1.369909617149586 , -0.4453090771813964);
\coordinate (V28) at (1.729960457502944 , -0.5616977212668609);
\coordinate (V29) at (0.5744810618107785 , 0.4933941411439507);
\coordinate (V30) at (-4.045 , 2.935);
\draw[edge] (V1) -- node[edgeLabel] {$1$} (V6);
\draw[edge] (V1) -- node[edgeLabel] {$2$} (V7);
\draw[edge] (V1) -- node[edgeLabel] {$3$} (V13);
\draw[edge] (V2) -- node[edgeLabel] {$4$} (V5);
\draw[edge] (V2) -- node[edgeLabel] {$5$} (V9);
\draw[edge] (V2) -- node[edgeLabel] {$6$} (V26);
\draw[edge] (V3) -- node[edgeLabel] {$7$} (V4);
\draw[edge] (V3) -- node[edgeLabel] {$8$} (V14);
\draw[edge] (V3) -- node[edgeLabel] {$9$} (V25);
\draw[edge] (V4) -- node[edgeLabel] {$10$} (V11);
\draw[edge] (V4) -- node[edgeLabel] {$11$} (V22);
\draw[edge] (V5) -- node[edgeLabel] {$12$} (V16);
\draw[edge] (V5) -- node[edgeLabel] {$13$} (V19);
\draw[edge] (V6) -- node[edgeLabel] {$14$} (V20);
\draw[edge] (V6) -- node[edgeLabel] {$15$} (V28);
\draw[edge] (V7) -- node[edgeLabel] {$16$} (V12);
\draw[edge] (V7) -- node[edgeLabel] {$17$} (V13);
\draw[edge] (V8) -- node[edgeLabel] {$18$} (V11);
\draw[edge] (V8) -- node[edgeLabel] {$19$} (V15);
\draw[edge] (V8) -- node[edgeLabel] {$20$} (V27);
\draw[edge] (V9) -- node[edgeLabel] {$21$} (V10);
\draw[edge] (V9) -- node[edgeLabel] {$22$} (V26);
\draw[edge] (V10) -- node[edgeLabel] {$23$} (V17);
\draw[edge] (V10) -- node[edgeLabel] {$24$} (V23);
\draw[edge] (V11) -- node[edgeLabel] {$25$} (V22);
\draw[edge] (V12) -- node[edgeLabel] {$26$} (V24);
\draw[edge] (V12) -- node[edgeLabel] {$27$} (V30);
\draw[edge] (V13) -- node[edgeLabel] {$28$} (V18);
\draw[edge] (V14) -- node[edgeLabel] {$29$} (V17);
\draw[edge] (V14) -- node[edgeLabel] {$30$} (V25);
\draw[edge] (V15) -- node[edgeLabel] {$31$} (V16);
\draw[edge] (V15) -- node[edgeLabel] {$32$} (V27);
\draw[edge] (V16) -- node[edgeLabel] {$33$} (V19);
\draw[edge] (V17) -- node[edgeLabel] {$34$} (V23);
\draw[edge] (V18) -- node[edgeLabel] {$35$} (V21);
\draw[edge] (V18) -- node[edgeLabel] {$36$} (V29);
\draw[edge] (V19) -- node[edgeLabel] {$37$} (V24);
\draw[edge] (V20) -- node[edgeLabel] {$38$} (V23);
\draw[edge] (V20) -- node[edgeLabel] {$39$} (V28);
\draw[edge] (V21) -- node[edgeLabel] {$40$} (V22);
\draw[edge] (V21) -- node[edgeLabel] {$41$} (V29);
\draw[edge] (V24) -- node[edgeLabel] {$42$} (V30);
\draw[edge] (V25) -- node[edgeLabel] {$43$} (V30);
\draw[edge] (V26) -- node[edgeLabel] {$44$} (V29);
\draw[edge] (V27) -- node[edgeLabel] {$45$} (V28);
\vertexLabelR{V1}{left}{$ $}
\vertexLabelR{V2}{left}{$ $}
\vertexLabelR{V3}{left}{$ $}
\vertexLabelR{V4}{left}{$ $}
\vertexLabelR{V5}{left}{$ $}
\vertexLabelR{V6}{left}{$ $}
\vertexLabelR{V7}{left}{$ $}
\vertexLabelR{V8}{left}{$ $}
\vertexLabelR{V9}{left}{$ $}
\vertexLabelR{V10}{left}{$ $}
\vertexLabelR{V11}{left}{$ $}
\vertexLabelR{V12}{left}{$ $}
\vertexLabelR{V13}{left}{$ $}
\vertexLabelR{V14}{left}{$ $}
\vertexLabelR{V15}{left}{$ $}
\vertexLabelR{V16}{left}{$ $}
\vertexLabelR{V17}{left}{$ $}
\vertexLabelR{V18}{left}{$ $}
\vertexLabelR{V19}{left}{$ $}
\vertexLabelR{V20}{left}{$ $}
\vertexLabelR{V21}{left}{$ $}
\vertexLabelR{V22}{left}{$ $}
\vertexLabelR{V23}{left}{$ $}
\vertexLabelR{V24}{left}{$ $}
\vertexLabelR{V25}{left}{$ $}
\vertexLabelR{V26}{left}{$ $}
\vertexLabelR{V27}{left}{$ $}
\vertexLabelR{V28}{left}{$ $}
\vertexLabelR{V29}{left}{$ $}
\vertexLabelR{V30}{left}{$ $}

\end{tikzpicture}}
\end{minipage}
\caption{Facegraph of a surface with automorphism group isomorphic to $A_5$}
\label{a_5_graph}
\end{figure}
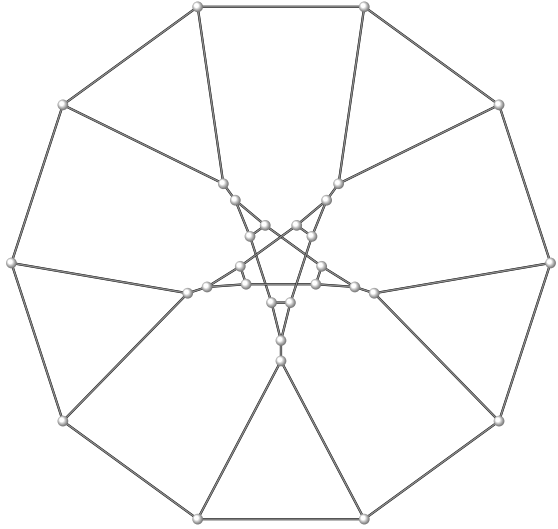

\subsection{Embeddings of Simplicial Surfaces}

In order to translate a simplicial surface into a polyhedron with equilateral triangles, one has to solve the \emph{embedding problem}. Solving the embedding problem is equivalent to finding an embedding of a simplicial surface as defined below.

\begin{definition}\label{embedding}
Let $X$ be a simplicial surface. A map $\phi : X_0 \mapsto \mathbb{R}^3$ such that all neighbouring vertices $v_1$ and $v_2$ satisfy
\[
\|\phi(v_1) -\phi(v_2)\|^2 = 1
\]
in the Euclidean norm is called an \emph{embedding} of $X$ with equilateral triangles.
\end{definition}

For 3-edge coloured simplicial surfaces, we can identify each colour with an edge length and ask for embeddings with a given congruence class of triangles.

\begin{remark}
  Assume that $X$ is a simplicial surface with edges coloured in red (r), green (g) and blue (b). Furthermore, let $\ell_r, \ell_g, \ell_b$ be non-negative real numbers satisfying the triangle inequality, i.e.\ there exists a triangle with those edge lengths. We seek to embed $X$ into the Euclidean 3-space, whereby the embedding is constructed out of congruent triangles with edge lengths $(\ell_r, \ell_b, \ell_g)$.  
\end{remark}

The question of computing such an embedding introduces a
system of quadratic equations.
In general, it remains an unsolved problem whether there exists an embedding of a given simplicial surface, constructed from triangles of a certain congruence type.
For instance, if $X$ is a simplicial surface, whose vertex graph $\mathcal{V}(X)$ contains a clique of size $n\geq 5$, then an embedding constructed from equilateral triangles does not exist. This can be seen as follows: Assume that a simplicial surface $X$ such that $\mathcal{V}(X)$ contains a clique of size $n\geq 5$ has an embedding constructed from equilateral triangles. Let $v_1,\ldots,v_5$ be 3-dimensional coordinates corresponding to vertices of the described clique. Enforcing the coordinates $v_1,v_2,v_3$ to satisfy the desired pairwise distance, gives rise to a equilateral triangle. Without loss of generality we can assume that this triangle with edge lengths $1$ is given by 
$$(v_1,v_2,v_3)=\left((0,0,0)^t,(1,0,0)^t,(\frac{1}{2},\frac{\sqrt{3}}{2},0)^t\right).$$
Clearly, $v_4$ and $v_5$ have to be on the intersection of the unit spheres with centres at the points $v_1,v_2,v_3$. As the intersection equals $$\{(\frac{1}{2},\frac{1}{2\sqrt{3}},\sqrt{\frac{2}{3}})^t,(\frac{1}{2},\frac{1}{2\sqrt{3}},-\sqrt{\frac{2}{3}})^t \}$$ and since $v_4\neq v_5$, we can assign $v_4$ to the first and $v_5$ to the second coordinate in the set. This leads to $\| v_4 -v_5 \|=2 \sqrt{\frac{2}{3}}$ which contradicts the existence of such an embedding.

For example, the minimal triangulation of the torus $T$ consisting of 7 vertices, 21 edges and 14 triangles and incidence relations depicted as in Figure \ref{foldingTorus} does not have an embedding consisting of equilateral triangle, since the graph $\mathcal{V}(T)$ is isomorphic to the complete graph on 7 vertices. Although this simplicial surface cannot be embedded into the Euclidean space as a polyhedron constructed from equilateral triangles, its combinatorial structure can be visualized in a folding plan, see Figure \ref{foldingTorus}.

\begin{figure}[H]
    \centering

\scalebox{0.75}{\begin{tikzpicture}[vertexBall, edgeDouble, faceStyle, scale=2]

\coordinate (V1_1) at (0., 0.);
\coordinate (V2_1) at (1., 0.);
\coordinate (V3_1) at (0.4999999999999999, 0.8660254037844386);
\coordinate (V4_1) at (-0.5, 0.8660254037844384);
\coordinate (V4_2) at (1.5, -0.8660254037844384);
\coordinate (V5_1) at (-0.9999999999999996, 0.);
\coordinate (V5_2) at (1.5, 0.8660254037844388);
\coordinate (V6_1) at (0.5000000000000001, -0.8660254037844386);
\coordinate (V6_2) at (0.9999999999999996, 1.732050807568877);
\coordinate (V6_3) at (-1.5, 0.8660254037844382);
\coordinate (V7_1) at (-0.4999999999999997, -0.8660254037844383);
\coordinate (V7_2) at (2., 0.);
\coordinate (V7_3) at (0., 1.732050807568877);

\fill[face]  (V2_1) -- (V3_1) -- (V1_1) -- cycle;
\node[faceLabel] at (barycentric cs:V2_1=1,V3_1=1,V1_1=1) {$1$};
\fill[face]  (V3_1) -- (V4_1) -- (V1_1) -- cycle;
\node[faceLabel] at (barycentric cs:V3_1=1,V4_1=1,V1_1=1) {$2$};
\fill[face]  (V6_1) -- (V4_2) -- (V2_1) -- cycle;
\node[faceLabel] at (barycentric cs:V6_1=1,V4_2=1,V2_1=1) {$3$};
\fill[face]  (V2_1) -- (V5_2) -- (V3_1) -- cycle;
\node[faceLabel] at (barycentric cs:V2_1=1,V5_2=1,V3_1=1) {$4$};
\fill[face]  (V5_2) -- (V6_2) -- (V3_1) -- cycle;
\node[faceLabel] at (barycentric cs:V5_2=1,V6_2=1,V3_1=1) {$5$};
\fill[face]  (V3_1) -- (V7_3) -- (V4_1) -- cycle;
\node[faceLabel] at (barycentric cs:V3_1=1,V7_3=1,V4_1=1) {$6$};
\fill[face]  (V6_2) -- (V7_3) -- (V3_1) -- cycle;
\node[faceLabel] at (barycentric cs:V6_2=1,V7_3=1,V3_1=1) {$7$};
\fill[face]  (V4_2) -- (V7_2) -- (V2_1) -- cycle;
\node[faceLabel] at (barycentric cs:V4_2=1,V7_2=1,V2_1=1) {$8$};
\fill[face]  (V2_1) -- (V7_2) -- (V5_2) -- cycle;
\node[faceLabel] at (barycentric cs:V2_1=1,V7_2=1,V5_2=1) {$9$};
\fill[face]  (V1_1) -- (V6_1) -- (V2_1) -- cycle;
\node[faceLabel] at (barycentric cs:V1_1=1,V6_1=1,V2_1=1) {$10$};
\fill[face]  (V5_1) -- (V7_1) -- (V1_1) -- cycle;
\node[faceLabel] at (barycentric cs:V5_1=1,V7_1=1,V1_1=1) {$11$};
\fill[face]  (V4_1) -- (V5_1) -- (V1_1) -- cycle;
\node[faceLabel] at (barycentric cs:V4_1=1,V5_1=1,V1_1=1) {$12$};
\fill[face]  (V4_1) -- (V6_3) -- (V5_1) -- cycle;
\node[faceLabel] at (barycentric cs:V4_1=1,V6_3=1,V5_1=1) {$13$};
\fill[face]  (V1_1) -- (V7_1) -- (V6_1) -- cycle;
\node[faceLabel] at (barycentric cs:V1_1=1,V7_1=1,V6_1=1) {$14$};

\draw[edge] (V2_1) -- node[edgeLabel] {$1$} (V1_1);
\draw[edge] (V1_1) -- node[edgeLabel] {$2$} (V3_1);
\draw[edge] (V1_1) -- node[edgeLabel] {$3$} (V4_1);
\draw[edge] (V1_1) -- node[edgeLabel] {$4$} (V5_1);
\draw[edge] (V6_1) -- node[edgeLabel] {$5$} (V1_1);
\draw[edge] (V1_1) -- node[edgeLabel] {$6$} (V7_1);
\draw[edge] (V3_1) -- node[edgeLabel] {$7$} (V2_1);
\draw[edge] (V2_1) -- node[edgeLabel] {$8$} (V4_2);
\draw[edge] (V5_2) -- node[edgeLabel] {$9$} (V2_1);
\draw[edge] (V2_1) -- node[edgeLabel] {$10$} (V6_1);
\draw[edge] (V7_2) -- node[edgeLabel] {$11$} (V2_1);
\draw[edge] (V4_1) -- node[edgeLabel] {$12$} (V3_1);
\draw[edge] (V3_1) -- node[edgeLabel] {$13$} (V5_2);
\draw[edge] (V3_1) -- node[edgeLabel] {$14$} (V6_2);
\draw[edge] (V7_3) -- node[edgeLabel] {$15$} (V3_1);
\draw[edge] (V5_1) -- node[edgeLabel] {$16$} (V4_1);
\draw[edge] (V4_2) -- node[edgeLabel] {$17$} (V6_1);
\draw[edge] (V6_3) -- node[edgeLabel] {$17$} (V4_1);
\draw[edge] (V7_2) -- node[edgeLabel] {$18$} (V4_2);
\draw[edge] (V4_1) -- node[edgeLabel] {$18$} (V7_3);
\draw[edge] (V6_2) -- node[edgeLabel] {$19$} (V5_2);
\draw[edge] (V5_1) -- node[edgeLabel] {$19$} (V6_3);
\draw[edge] (V7_1) -- node[edgeLabel] {$20$} (V5_1);
\draw[edge] (V5_2) -- node[edgeLabel] {$20$} (V7_2);
\draw[edge] (V6_1) -- node[edgeLabel] {$21$} (V7_1);
\draw[edge] (V7_3) -- node[edgeLabel] {$21$} (V6_2);

\vertexLabelR{V1_1}{left}{$1$}
\vertexLabelR{V2_1}{left}{$2$}
\vertexLabelR{V3_1}{left}{$3$}
\vertexLabelR{V4_1}{left}{$4$}
\vertexLabelR{V4_2}{left}{$4$}
\vertexLabelR{V5_1}{left}{$5$}
\vertexLabelR{V5_2}{left}{$5$}
\vertexLabelR{V6_1}{left}{$6$}
\vertexLabelR{V6_2}{left}{$6$}
\vertexLabelR{V6_3}{left}{$6$}
\vertexLabelR{V7_1}{left}{$7$}
\vertexLabelR{V7_2}{left}{$7$}
\vertexLabelR{V7_3}{left}{$7$}

\end{tikzpicture}}
    \caption{A minimal triangulation of the torus}
    \label{foldingTorus}
\end{figure}
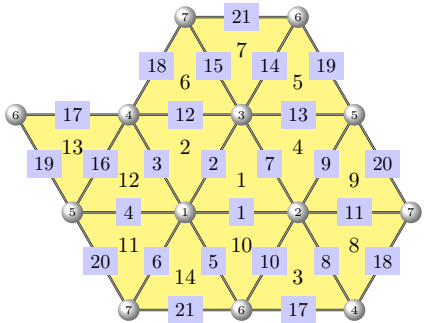
The question whether, a spherical simplicial surface has an embedding constructed from equilateral triangles remains unanswered.
But the existence of such an embedding has been studied for simplicial surfaces that form a combinatorial torus. 
For example, in \cite{tetrahedra_torus} the authors prove that by iteratively gluing together the faces of regular tetrahedra it is impossible to construct a torus built from equilateral triangles.

For a simplicial surface we  denote the symmetry group of the rigid body in $\mathbb{R}^3$ that arises from an embedding $\phi$ of $X$ by $\Aut(\phi(X))\leq \mathrm{O}(3)$. Note, that for a simplicial surface $X$ with an embedding $\phi$ the following holds:
\[
 \Aut(\phi(X))\hookrightarrow \Aut(X)\hookrightarrow \Aut(\mathcal{F}(X)).
\]

\section{Frucht's cubic graphs} \label{fruchts_graphs}

In this section, we recall Frucht's cubic graph construction introduced in \cite{frucht}. For a given group $G$ with generators $S=\{g_1,\dots,g_n \},$ Frucht constructs a cubic graph with the property that the automorphism group of the obtained graph is isomorphic to $G.$ In his construction he distinguishes between cyclic and non-cyclic groups. Here, we give a slightly simplified construction for a group $G$ generated by two elements. Furthermore, we present a modified construction for a group $G$ generated by more than three elements, as in this case, Frucht's construction in general does not yield a graph with the given group as automorphism group. Moreover, we show that our constructed graph has a three-edge colouring and hence yields a cycle double cover, as described in \cite{szekeres}. 
For a general group $G$ generated by $n$ elements $g_1,\dots,g_n\in G$, the cubic graph constructed by Frucht has $2(n+2)|G|$ nodes. For $n=2$, we can modify the graph by contracting 3-cycles such that the resulting graph has $2(2+1)|G|=6|G|$ nodes and still has the same automorphism group.

Frucht's construction is based on Cayley graphs, which are directed edge coloured graphs.
\begin{definition}
 Let $G$ be a finite group with generators $S=\{g_1,\dots,g_n\}$, such that $\langle S \rangle=G$. The nodes of the Cayley graph $\mathcal{C}_{G,S}$ are given by the elements of $G$ and its edges by $\{(g,gs) \mid g\in G,s\in S \}$. The $n$-edge colouring is given as follows:
    $\{(g,g\cdot s) \mid g\in G,s\in S \}\to \{1,\dots,n \}, (g,g\cdot g_i)\mapsto i$.   
\end{definition}

In Figure \ref{frucht4}, we see a corresponding uncoloured and undirected version of a Cayley graph with 2 generators.
The automorphism group of a Cayley graph corresponds to the left-action of the group $G$ on itself.
\begin{lemma}
  The automorphism group of a Cayley graph $\mathcal{C}_{G,S}$ respecting its edge colouring is isomorphic to $G$ and is given by the maps $\pi_g:G\to G, h\mapsto g\cdot h,$ for all $g\in G$.
\end{lemma}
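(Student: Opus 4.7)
The plan is to exhibit $G$ as a subgroup of the colour-preserving automorphism group via left multiplication, and then argue surjectivity by a rigidity principle: once the image of the identity is fixed, colour preservation propagates the automorphism to every element, because each colour class encodes right multiplication by one specific generator.

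First I would check that each $\pi_g$ is indeed a colour-preserving automorphism of $\mathcal{C}_{G,S}$. The map $\pi_g$ is a bijection on nodes with inverse $\pi_{g^{-1}}$, and the directed edge $(h,hg_i)$ of colour $i$ is sent to $(gh,(gh)g_i)$, which is again a directed edge of colour $i$ by the very definition of the Cayley graph. A routine check then shows that $g\mapsto\pi_g$ is an injective group homomorphism into the colour-preserving automorphism group, since $\pi_g\circ\pi_{g'}=\pi_{gg'}$ by associativity and $g=\pi_g(e)$ allows us to recover $g$ from $\pi_g$.

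The less immediate direction is surjectivity. Let $\pi$ be an arbitrary colour-preserving automorphism and set $g:=\pi(e)$. For any $h\in G$ and any generator $g_i\in S$, the edge $(h,hg_i)$ has colour $i$, so its image must be a colour-$i$ edge with tail $\pi(h)$; by construction the unique such edge is $(\pi(h),\pi(h)g_i)$, which forces $\pi(hg_i)=\pi(h)g_i$. Writing an arbitrary element as a word $h=g_{i_1}\cdots g_{i_k}$ in the generators — which is possible because $\langle S\rangle=G$ — and iterating the relation above starting from $\pi(e)=g$ yields $\pi(h)=gh$, i.e.\ $\pi=\pi_g$.

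There is no real obstacle in this proof, only one place to be careful: the inductive step in the last paragraph genuinely requires $S$ to generate $G$. If it did not, the Cayley graph would split into several connected components, each of which a colour-preserving automorphism could permute independently; the rigidity would then fail and extra automorphisms beyond the $\pi_g$ would appear. With $\langle S\rangle=G$, however, the argument closes cleanly.
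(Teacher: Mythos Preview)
Your proof is correct. The paper does not actually supply a proof of this lemma, treating it as a standard fact about Cayley graphs, so your argument provides more detail than the paper itself. One minor point worth tightening: when you write an arbitrary element as a positive word $g_{i_1}\cdots g_{i_k}$ in the generators, the condition $\langle S\rangle=G$ a priori only guarantees a word in $S\cup S^{-1}$; this is harmless here because $G$ is finite (so each $g_i^{-1}$ is a positive power of $g_i$), or alternatively one can run the same edge argument on the unique \emph{incoming} colour-$i$ edge at $\pi(h)$ to obtain $\pi(hg_i^{-1})=\pi(h)g_i^{-1}$ directly.
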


For a finite group $G$ generated by $n$ elements, Frucht defines a cubic graph based on this constructions by spliting the nodes corresponding to a group element $g\in G$ into several nodes $x_{i,g},i=1,\dots,2n+4$ as shown in Figure \ref{frucht1}.

\begin{figure}[H]
\begin{minipage}{.49\textwidth}
    \centering
    \resizebox{!}{5cm}{	\begin{tikzpicture}[vertexBall, edgeDouble=nolabels, faceStyle=nolabels, scale=0.6]

	\coordinate (V4) at (0 , 0);
	
	\coordinate (V9) at (10 , 3);
	\coordinate (V10) at (10 , -2);
	\coordinate (V11) at (10, -4);
	\coordinate (V12) at (10 , -6);

	\draw[edge=black] (V4) -- node[edgeLabel] {$1$} (V9);
	\draw[edge=black] (V4) -- node[edgeLabel] {$2$} (V10);
	\draw[edge=black] (V4) -- node[edgeLabel] {$3$} (V11);
	\draw[edge=black] (V4) -- node[edgeLabel] {$4$} (V12);
    \draw[dotted] (V10) -- node[edgeLabel] {$5$} (V11);


	\vertexLabelR[]{V4}{left}{$ $}
	\vertexLabelR[]{V9}{left}{$ $}
	\vertexLabelR[]{V10}{left}{$ $}
	\vertexLabelR[]{V11}{left}{$ $}
	\vertexLabelR[]{V12}{left}{$ $}

	\def\x{0.5}
	
	\node at (0-\x , 0) {$g$};
	
	\node at (10+3*\x , 3) {$g\cdot g_1$};
	\node at (10+3*\x , -2) {$g\cdot g_1^{-1}$};
	\node at (10+3*\x, -4) {$g\cdot g_n$};
	\node at (10+3*\x , -6) {$g\cdot g_n^{-1}$};

	\end{tikzpicture}}
    \subcaption{}
    \label{frucht4}
\end{minipage}
\begin{minipage}{.49\textwidth}
    \centering
    \resizebox{!}{5cm}{	\begin{tikzpicture}[vertexBall, edgeDouble=nolabels, faceStyle=nolabels, scale=0.5]

	\coordinate (V1) at (3 , 3);
	\coordinate (V2) at (3 , 0);
	\coordinate (V3) at (8 , 0);
	\coordinate (V4) at (0 , 0);
	\coordinate (V5) at (8 , 3);
	\coordinate (V6) at (8 , -2);
	\coordinate (V7) at (8 , -4);
	\coordinate (V8) at (8 , -6);
	\coordinate (V9) at (10 , 3);
	\coordinate (V10) at (10 , -2);
	\coordinate (V11) at (10, -4);
	\coordinate (V12) at (10 , -6);

	\draw[edge=red] (V1) -- node[edgeLabel] {$1$} (V4);
	\draw[edge=blue] (V2) -- node[edgeLabel] {$2$} (V1);
	\draw[edge=green] (V2) -- node[edgeLabel] {$3$} (V4);

	\draw[edge=blue] (V4) -- node[edgeLabel] {$4$} (V8);
	\draw[edge=green] (V8) -- node[edgeLabel] {$5$} (V7);
	\draw[edge=red] (V2) -- node[edgeLabel] {$6$} (V3);

	\draw[edge=blue] (V5) -- node[edgeLabel] {$4$} (V3);
	\draw[edge=green] (V1) -- node[edgeLabel] {$5$} (V5);
	\draw[edge=red] (V5) -- node[edgeLabel] {$6$} (V9);

	\draw[edge=blue, dotted] (V6) -- node[edgeLabel] {$4$} (V7);
	\draw[edge=green] (V3) -- node[edgeLabel] {$5$} (V6);
	\draw[edge=red] (V7) -- node[edgeLabel] {$6$} (V11);
	\draw[edge=red] (V8) -- node[edgeLabel] {$6$} (V12);
	\draw[edge=red] (V6) -- node[edgeLabel] {$6$} (V10);

	\vertexLabelR[]{V1}{left}{$ $}
	\vertexLabelR[]{V2}{left}{$ $}
	\vertexLabelR[]{V3}{left}{$ $}
	\vertexLabelR[]{V4}{left}{$ $}
	\vertexLabelR[]{V5}{left}{$ $}
	\vertexLabelR[]{V6}{left}{$ $}
	\vertexLabelR[]{V7}{left}{$ $}
	\vertexLabelR[]{V8}{left}{$ $}
	\vertexLabelR[]{V9}{left}{$ $}
	\vertexLabelR[]{V10}{left}{$ $}
	\vertexLabelR[]{V11}{left}{$ $}
	\vertexLabelR[]{V12}{left}{$ $}

	\def\x{0.5}
	\node at (3 , 3+\x) {$x_{1,g}$};
	\node at (3 , 0-\x) {$x_{2,g}$};
	\node at (8-2*\x , 0+\x) {$x_{3,g}$};
	\node at (0-2*\x , 0) {$x_{4,g}$};
	\node at (8 , 3+\x) {$x_{5,g}$};
	\node at (8-2*\x , -2) {$x_{6,g}$};
	\node at (8-3*\x , -4+\x) {$x_{2n+3,g}$};
	\node at (8 , -6-\x) {$x_{2n+4,g}$};
	\node at (10+3*\x , 3) {$x_{6,g\cdot g_1}$};
	\node at (10+3*\x , -2) {$x_{5,g\cdot g_1^{-1}}$};
	\node at (10+4*\x, -4) {$x_{2n+4,g\cdot g_n}$};
	\node at (10+4*\x , -6) {$x_{2n+3,g\cdot g_n^{-1}}$};

	\end{tikzpicture}}
    \subcaption{}
\label{frucht1}

\end{minipage}
\caption{(a) Undirected Cayley graph (b) Part of Frucht's original cubic graph construction}
\end{figure}
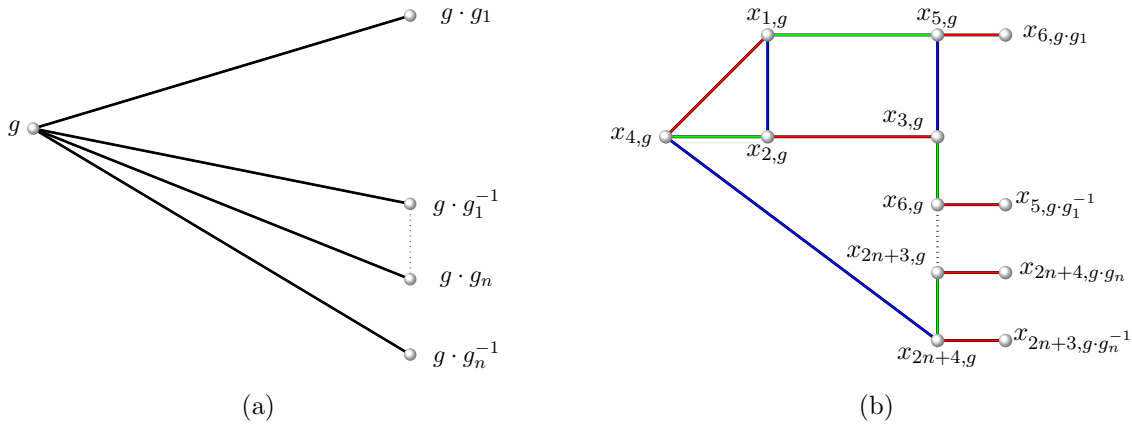

To formalise this, we make use of quadratic forms to define cubic graphs.
\begin{remark}
A cubic graph can be defined by a quadratic form which is related to the adjacency matrix of the graph in the following way: Let $x=(x_1,\dots,x_n)$ denote formal variables corresponding to the nodes of a graph $\Gamma$ with $n$ nodes. Let $A=(a_{ij})$ denote the adjacency matrix of $\Gamma$, and define $$ Q=\frac{1}{2}xAx^t .$$ Each monomial of $Q$ corresponds to an edge in $\Gamma$ and we can obtain $A$ from $Q$ via $a_{ij}=Q(e),$ with $e=e_i+e_j$, where $e_i$ denotes the vector with a $1$ at the $i$th position and $0$'s everywhere else.
The importance of this point of view is the observation that the automorphism group of the graph $\Gamma$ is isomorphic to the group of permutations of the variables that preserve the quadratic from $Q$.
\end{remark}

Let $h=|G|$ and $S=\{g_1,\dots,g_n\}$ such that $\langle S \rangle=G$. We denote the other elements of $G$ by ${g_{n+1},\dots,g_h}$ and define a graph via a quadratic form with nodes corresponding to the indeterminates $x_{i,g_j}$, with $i=1,\dots,2n+4,j=1,\dots,h$. For $i\neq j$ we define the quadratic form $$Q_{ij}=\sum_{k=1}^h x_{i,g_k}x_{j,g_k}.$$ Based on $Q_{ij}$, Frucht defines the quadratic forms $Q$ and $R$ that yield a cubic graph:
\begin{align*}
Q & ={\color{blue}{Q_{1,2}}{\color{black}{+}{\color{red}{Q_{1,4}}{\color{black}{+}{\color{ForestGreen}{Q_{1,5}}{\color{black}{+}{\color{red}{Q_{2,3}}{\color{black}{+}{\color{ForestGreen}{Q_{2,4}}{\color{black}{+}{\color{blue}{Q_{3,5}}{\color{black}{+}{\color{ForestGreen}{Q_{3,6}}{\color{black}{+}{\color{blue}{Q_{4,2n+4}}{\color{black}{+}{\color{ForestGreen}{\sum_{i=2}^{n}Q_{2i+3,2i+4}}{\color{black}{+}{\color{blue}{\sum_{i=2}^{n}Q_{2i+2,2i+3}}{\color{black}{+}{\color{red}{R},}}}}}}}}}}}}}}}}}}}}}\\
R & =\sum_{k=1}^{h}\sum_{j=1}^{n}x_{2j+3,g_{k}}x_{2j+4,g_{j}\cdot g_{k}}.
\end{align*}

For two generators $S=\{g_1,g_2 \}$, Frucht proves in \cite{frucht} that the automorphism group of the resulting cubic graph is isomorphic to $G$. However, for three generators this is not true in general. For instance, consider $G=A_5$ with generators $S=\{ g_1\coloneqq(1,5)(2,4), g_2\coloneqq(1,2,4,3,5), g_3\coloneqq(2,5,3)\}$. The automorphism group of the resulting cubic graph is isomorphic to $C_2\times A_5$ and thus does not let to a cubic graph with given automorphism group.
Note that the order of the generators matters in the construction by Frucht, i.e.\ in general reordering generators yields a non-isomorphic graph.

We modify Frucht's construction slightly to correct his proof and, in the case $|S|=2$, simplify the graph as illustrated in Figure \ref{frucht3} and Figure \ref{frucht2}. Both of these constructions are based on contracting 3-cycles.

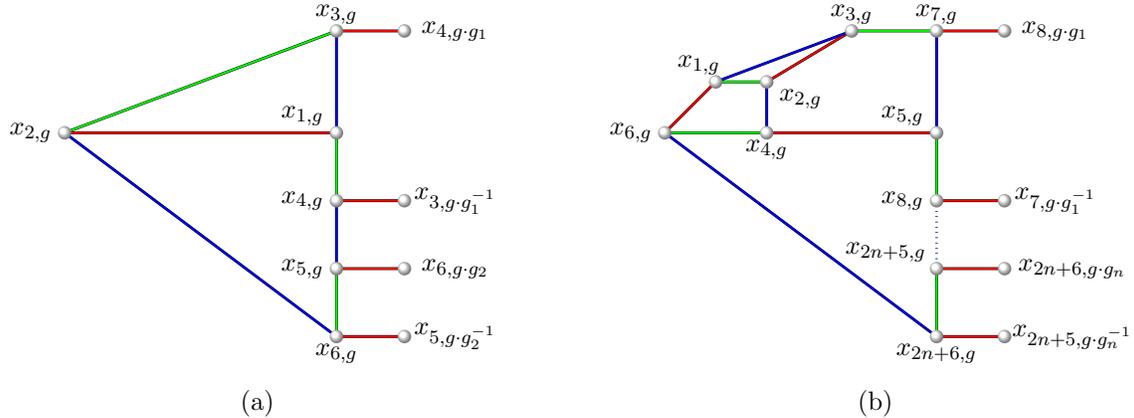
\begin{figure}[H]
\begin{minipage}{.49\textwidth} 
    \centering
    \resizebox{!}{5cm}{	\begin{tikzpicture}[vertexBall, edgeDouble=nolabels, faceStyle=nolabels, scale=0.5]

	\coordinate (V3) at (8 , 0);
	\coordinate (V4) at (0 , 0);
	\coordinate (V5) at (8 , 3);
	\coordinate (V6) at (8 , -2);
	\coordinate (V7) at (8 , -4);
	\coordinate (V8) at (8 , -6);
	\coordinate (V9) at (10 , 3);
	\coordinate (V10) at (10 , -2);
	\coordinate (V11) at (10, -4);
	\coordinate (V12) at (10 , -6);

	\draw[edge=blue] (V4) -- node[edgeLabel] {$4$} (V8);
	\draw[edge=green] (V8) -- node[edgeLabel] {$5$} (V7);
	\draw[edge=red] (V4) -- node[edgeLabel] {$6$} (V3);

	\draw[edge=blue] (V5) -- node[edgeLabel] {$4$} (V3);
	\draw[edge=green] (V4) -- node[edgeLabel] {$5$} (V5);
	\draw[edge=red] (V5) -- node[edgeLabel] {$6$} (V9);

	\draw[edge=blue] (V6) -- node[edgeLabel] {$4$} (V7);
	\draw[edge=green] (V3) -- node[edgeLabel] {$5$} (V6);
	\draw[edge=red] (V7) -- node[edgeLabel] {$6$} (V11);
	\draw[edge=red] (V8) -- node[edgeLabel] {$6$} (V12);
	\draw[edge=red] (V6) -- node[edgeLabel] {$6$} (V10);

	\vertexLabelR[]{V3}{left}{$ $}
	\vertexLabelR[]{V4}{left}{$ $}
	\vertexLabelR[]{V5}{left}{$ $}
	\vertexLabelR[]{V6}{left}{$ $}
	\vertexLabelR[]{V7}{left}{$ $}
	\vertexLabelR[]{V8}{left}{$ $}
	\vertexLabelR[]{V9}{left}{$ $}
	\vertexLabelR[]{V10}{left}{$ $}
	\vertexLabelR[]{V11}{left}{$ $}
	\vertexLabelR[]{V12}{left}{$ $}

	\def\x{0.5}
	\node at (8-2*\x , 0+\x) {$x_{1,g}$};
	\node at (0-2*\x , 0) {$x_{2,g}$};
	\node at (8 , 3+\x) {$x_{3,g}$};
	\node at (8-2*\x , -2) {$x_{4,g}$};
	\node at (8-2*\x , -4) {$x_{5,g}$};
	\node at (8 , -6-\x) {$x_{6,g}$};
	\node at (10+3*\x , 3) {$x_{4,g\cdot g_1}$};
	\node at (10+3*\x , -2) {$x_{3,g\cdot g_1^{-1}}$};
	\node at (10+3*\x, -4) {$x_{6,g\cdot g_2}$};
	\node at (10+3*\x , -6) {$x_{5,g\cdot g_2^{-1}}$};

	\end{tikzpicture}}
    \subcaption{}
\label{frucht2}
\end{minipage}
\begin{minipage}{.49\textwidth} 
    \centering
    \resizebox{!}{5cm}{	\begin{tikzpicture}[vertexBall, edgeDouble=nolabels, faceStyle=nolabels, scale=0.5]

	\coordinate (V2) at (3 , 0);
	\coordinate (V3) at (8 , 0);
	\coordinate (V4) at (0 , 0);
	\coordinate (V5) at (8 , 3);
	\coordinate (V6) at (8 , -2);
	\coordinate (V7) at (8 , -4);
	\coordinate (V8) at (8 , -6);
	\coordinate (V9) at (10 , 3);
	\coordinate (V10) at (10 , -2);
	\coordinate (V11) at (10, -4);
	\coordinate (V12) at (10 , -6);

	\coordinate (V13) at (3 , 1.5);
	\coordinate (V14) at (1.5 , 1.5);
	\coordinate (V15) at (5.5 , 3);

	\draw[edge=red] (V13) -- node[edgeLabel] {$1$} (V15);
	\draw[edge=blue] (V14) -- node[edgeLabel] {$2$} (V15);
	\draw[edge=green] (V13) -- node[edgeLabel] {$3$} (V14);	

	\draw[edge=red] (V14) -- node[edgeLabel] {$1$} (V4);
	\draw[edge=blue] (V2) -- node[edgeLabel] {$2$} (V13);
	\draw[edge=green] (V2) -- node[edgeLabel] {$3$} (V4);

	\draw[edge=blue] (V4) -- node[edgeLabel] {$4$} (V8);
	\draw[edge=green] (V8) -- node[edgeLabel] {$5$} (V7);
	\draw[edge=red] (V2) -- node[edgeLabel] {$6$} (V3);

	\draw[edge=blue] (V5) -- node[edgeLabel] {$4$} (V3);
	\draw[edge=green] (V15) -- node[edgeLabel] {$5$} (V5);
	\draw[edge=red] (V5) -- node[edgeLabel] {$6$} (V9);

	\draw[edge=blue, dotted] (V6) -- node[edgeLabel] {$4$} (V7);
	\draw[edge=green] (V3) -- node[edgeLabel] {$5$} (V6);
	\draw[edge=red] (V7) -- node[edgeLabel] {$6$} (V11);
	\draw[edge=red] (V8) -- node[edgeLabel] {$6$} (V12);
	\draw[edge=red] (V6) -- node[edgeLabel] {$6$} (V10);

	
	\vertexLabelR[]{V2}{left}{$ $}
	\vertexLabelR[]{V3}{left}{$ $}
	\vertexLabelR[]{V4}{left}{$ $}
	\vertexLabelR[]{V5}{left}{$ $}
	\vertexLabelR[]{V6}{left}{$ $}
	\vertexLabelR[]{V7}{left}{$ $}
	\vertexLabelR[]{V8}{left}{$ $}
	\vertexLabelR[]{V9}{left}{$ $}
	\vertexLabelR[]{V10}{left}{$ $}
	\vertexLabelR[]{V11}{left}{$ $}
	\vertexLabelR[]{V12}{left}{$ $}
	\vertexLabelR[]{V13}{left}{$ $}
	\vertexLabelR[]{V14}{left}{$ $}
	\vertexLabelR[]{V15}{left}{$ $}

	\def\x{0.5}
	\node at (1.5-\x , 1.5+\x) {$x_{1,g}$};
	\node at (3+2*\x , 1.5-\x) {$x_{2,g}$};
	\node at (5.5 , 3+\x) {$x_{3,g}$};

	\node at (3 , 0-\x) {$x_{4,g}$};
	\node at (8-2*\x , 0+\x) {$x_{5,g}$};
	\node at (0-2*\x , 0) {$x_{6,g}$};
	\node at (8 , 3+\x) {$x_{7,g}$};
	\node at (8-2*\x , -2) {$x_{8,g}$};
	\node at (8-3*\x , -4+\x) {$x_{2n+5,g}$};
	\node at (8 , -6-\x) {$x_{2n+6,g}$};
	\node at (10+3*\x , 3) {$x_{8,g\cdot g_1}$};
	\node at (10+3*\x , -2) {$x_{7,g\cdot g_1^{-1}}$};
	\node at (10+4*\x, -4) {$x_{2n+6,g\cdot g_n}$};
	\node at (10+4*\x , -6) {$x_{2n+5,g\cdot g_n^{-1}}$};

	\end{tikzpicture}}
    \subcaption{}
\label{frucht3}
\end{minipage}
\caption{(a) Simplified construction for two generators (b) Modified construction for more than two generators}
\end{figure}

In his proof, Frucht uses the notion of a cycle triplet of a node $v$ in a cubic graph $\Gamma$.

\begin{definition}
 Let $e_1,e_2,e_3$ be the edges incident to $v$ in $\Gamma$ and $c_{i,j}$ the length of a minimal cycle passing through the edges $e_i,e_j$. Then the cycle triplet of $v$ is defined by as the multi-set $\{c_{1,3},c_{2,3},c_{1,2}\}$.   
\end{definition} 

Next, Frucht shows that for a given finite group $G$ with generators $S$ the constructed graph $\Gamma_{G,S}$ has an automorphism group isomorphic to $G$ by using the following.

\begin{lemma}
For a given automorphism $\pi :\Gamma_{G,S}\to \Gamma_{G,S}$, nodes have to be mapped to nodes with the same cycle triplet, since cycles are mapped to cycles with same length by graph automorphisms.    
\end{lemma}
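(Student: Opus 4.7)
The statement is essentially a combinatorial consequence of the fact that graph automorphisms preserve lengths of cycles, so the plan is short. The main idea is to show that a graph automorphism sends the multiset of pairwise minimal cycle lengths at a node $v$ to the analogous multiset at $\pi(v)$, and then use $\pi^{-1}$ to obtain the reverse inequality.

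First I would set up notation: let $v\in \Gamma_{G,S}$ with incident edges $e_1,e_2,e_3$, and let $c_{i,j}$ denote the length of a minimal cycle in $\Gamma_{G,S}$ passing through both $e_i$ and $e_j$. Under the automorphism $\pi$, the node $\pi(v)$ is incident to the three edges $\pi(e_1),\pi(e_2),\pi(e_3)$, and I would denote the analogous minimal cycle lengths at $\pi(v)$ by $c'_{i,j}$. Since $\pi$ is an automorphism of $\Gamma_{G,S}$, it induces a bijection on the set of cycles that preserves length: if $C=(u_1,u_2,\ldots,u_\ell,u_1)$ is a cycle, then $(\pi(u_1),\pi(u_2),\ldots,\pi(u_\ell),\pi(u_1))$ is again a cycle of length $\ell$, because adjacency and non-adjacency are preserved.

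Next I would combine these observations. Fix indices $i\neq j$ and let $C_{i,j}$ be a minimal cycle through $e_i$ and $e_j$, of length $c_{i,j}$. Then $\pi(C_{i,j})$ is a cycle passing through $\pi(e_i)$ and $\pi(e_j)$ and has length $c_{i,j}$, so $c'_{i,j} \leq c_{i,j}$. Applying the same argument to the automorphism $\pi^{-1}$ at the node $\pi(v)$ yields $c_{i,j} \leq c'_{i,j}$, so equality holds for every pair $\{i,j\}$. Since the cycle triplet is the multiset $\{c_{1,2},c_{1,3},c_{2,3}\}$ and the labelling of the three incident edges is arbitrary, this shows that the multiset at $v$ agrees with the multiset at $\pi(v)$, which is exactly the claim.

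I do not expect any real obstacle here: the only subtlety worth mentioning explicitly is that the cycle triplet is defined as an unordered multiset, so one does not need to track which incident edge $\pi$ sends $e_i$ to — only the collection of pairwise minimum cycle lengths is relevant, and this is invariant under the permutation of the three incident edges induced by $\pi$.
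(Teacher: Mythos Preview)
Your argument is correct and is exactly the natural expansion of the paper's reasoning. In the paper the lemma is stated with its justification embedded in the sentence itself (``since cycles are mapped to cycles with same length by graph automorphisms'') and no separate proof is given; your write-up simply makes that one-line observation precise by using $\pi^{-1}$ to turn the obvious inequality $c'_{i,j}\le c_{i,j}$ into an equality and by noting that the cycle triplet is an unordered multiset.
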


For $n=2$ resp.\ $n>2$ we can define the cubic graph obtained from the Cayley graph by substituting each node with a component of the form as shown in Figure \ref{frucht2} resp.\ Figure \ref{frucht3}.
We set $h=|G|$ and for $n=2$ resp.\ $n>2$ we define a graph with nodes corresponding to the variables $x_{i,g_j}$,with $j=1,\dots,h$ and $i=1,\dots,6$ resp.\ $i=1,\dots,2n+6$.

For $n=2$, we define a cubic graph $\Gamma_{G,S}$ by the quadratic forms
\begin{align*}
&Q_{simp}=\color{red}{Q_{2,1}}\color{black}{+}\color{ForestGreen}{Q_{2,3}}\color{black}{+} \color{blue}{Q_{1,3}} \color{black}{+}\color{ForestGreen}{Q_{1,4}}\color{black}{+}\color{blue}{Q_{2,6}}\color{black}{+} \color{ForestGreen}{Q_{5,6}} \color{black}{+}\color{blue}{Q_{4,5}} \color{black}{+}\color{red}{R_{simp}},\\ &R_{simp}=\sum_{k=1}^{h}x_{3,g_{k}}x_{4,g_{k}\cdot g_{1}}+x_{5,g_{k}}x_{6,g_{k}\cdot g_{2}}.    
\end{align*}

For $n>2$, the graph $\Gamma_{G,S}$ is given by the following quadratic forms
\begin{align*}
Q_{mod}= & {\color{red}{Q_{1,6}}{\color{black}{+}{\color{ForestGreen}{Q_{1,2}}{\color{black}{+}{\color{blue}{Q_{1,3}}{\color{black}{+}{\color{red}{Q_{2,3}}{\color{black}{+}{\color{ForestGreen}{Q_{3,7}}{\color{black}{+}{\color{blue}{Q_{2,4}}{\color{black}{+}{\color{red}{Q_{4,5}}{\color{black}{+}{\color{ForestGreen}{Q_{4,6}}}}}}}}}}}}}}}}}\\
 & {\color{black}{+}}{\color{blue}{Q_{5,7}}{\color{black}{+}{\color{ForestGreen}{Q_{5,8}}{\color{black}{+}{\color{blue}{Q_{6,2n+6}}{\color{black}{+}{\color{ForestGreen}{\sum_{i=2}^{n}Q_{2i+5,2i+6}}{\color{black}{+}{\color{blue}{\sum_{i=2}^{n}Q_{2i+4,2i+5}}{\color{black}{+}{\color{red}{R_{mod}}}}}}}}}}}}}\\
R_{mod}= & \sum_{k=1}^h \sum_{j=1}^{n} x_{2j+5,g_k}x_{2j+6,g_k\cdot g_j}.
\end{align*}

 In Figure \ref{frucht2}, both node $x_{1,g}$ and node $x_{2,g}$ have the same cycle triplet $\{3,5,6 \}$. However, node $x_{4,g}$ is incident to two nodes that lie on a three-cycle, i.e.\ $x_{1,g}$ and $x_{3,g\cdot g_1^{-1}}$, but node $x_{6,g}$ is only incident to a single node that lies on a three-cycle, i.e.\ node $x_{2,g}$. Thus, for an automorphism $\pi$ and a node $x_{i,g }$ we have that $\pi(x_{i,g })=x_{i,g'}$. Similarly, in Figure \ref{frucht3},  components are mapped onto component, since $x_{6,g}$ is the only node that has cycle triplet $\{4,6,2+2n \}$ and the rest follows, from the incidence structure of the underlying cubic graph. 
 The remaining part of the proof is analogous to the proof in given by Frucht in \cite{frucht} and is based on showing that the automorphism group of the cubic graphs are isomorphic to the automorphism group of the corresponding edge-coloured Cayley graphs. Altogether, we have the following Theorem due to Frucht.

\begin{theorem}\label{Frucht_cubic_theorem}
Let $G=\langle g_1,\dots,g_n \rangle$ be a finite group. Then there exists a cubic graph $\Gamma$ with $\Aut(\Gamma)\cong G$. Moreover, if $n=2$ or $n>2$ the graph $\Gamma$ has $6|G|$ or $(2n+6)|G|$ nodes, respectively.
\end{theorem}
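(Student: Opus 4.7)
The plan is to treat the two regimes separately: for $n=2$ we use the graph $\Gamma_{G,S}$ defined by the quadratic form $Q_{simp}$, and for $n>2$ the graph defined by $Q_{mod}$. The node count follows immediately from the indexing, since each group element $g\in G$ contributes exactly $6$ variables $x_{i,g}$ in the first case and exactly $2n+6$ variables in the second. A term-by-term inspection of the respective quadratic form shows that every indeterminate appears in exactly three monomials, so each node has degree three and $\Gamma_{G,S}$ is cubic.

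Next, I would exhibit the embedding $G\hookrightarrow \Aut(\Gamma_{G,S})$. For each $h\in G$, define $\pi_h(x_{i,g})=x_{i,hg}$. The ``local'' summands $Q_{i,j}=\sum_k x_{i,g_k}x_{j,g_k}$ are obviously invariant under left translation, and the ``Cayley'' summands $R_{simp}$, $R_{mod}$ consist of products of the shape $x_{2j+5,g_k}x_{2j+6,g_k g_j}$ which are also preserved since $(hg_k)g_j=h(g_k g_j)$. Hence $h\mapsto\pi_h$ is an injective homomorphism $G\to\Aut(\Gamma_{G,S})$.

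The main obstacle, and the step where Frucht's original argument has to be repaired, is the reverse inclusion $\Aut(\Gamma_{G,S})\hookrightarrow G$. The strategy is to show that each automorphism $\pi$ maps the ``component'' of a group element onto another component, i.e.\ that there is a well-defined $i$-preserving index $i\mapsto i$ on the subscripts of the $x_{i,g}$, and an induced permutation on the second subscript. This is achieved by distinguishing nodes via their cycle triplets together with finer local invariants (such as the number of neighbours that themselves lie on a $3$-cycle). In the case $n=2$ the component displayed in Figure \ref{frucht2} has been designed so that each position $i\in\{1,\dots,6\}$ is rigidly identified; in the case $n>2$ the additional triangle attached in Figure \ref{frucht3} supplies the unique node $x_{6,g}$ with cycle triplet $\{4,6,2n+2\}$, from which the identification of all remaining positions propagates through the incidence structure. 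Once each layer $\{x_{i,g}\mid g\in G\}$ is preserved setwise, the induced action of $\pi$ on the layer encoding the Cayley edges must respect the $n$ edge colours given by $R_{simp}$ or $R_{mod}$, and therefore corresponds to a colour-preserving automorphism of $\mathcal{C}_{G,S}$.

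Finally, invoking the lemma that identifies such colour-preserving automorphisms of the Cayley graph with the left-regular action of $G$, we conclude $\pi=\pi_h$ for a unique $h\in G$. Combining the two inclusions gives $\Aut(\Gamma_{G,S})\cong G$, and the node counts established at the beginning complete the theorem. The delicate point throughout is entirely in the rigidity argument for the modified component when $n>2$; once the cycle-triplet-plus-neighbourhood invariant singles out the positions $x_{i,g}$ uniquely, Frucht's original reduction to Cayley graph automorphisms goes through unchanged.
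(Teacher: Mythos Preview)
Your proposal is correct and follows essentially the same route as the paper: define $\Gamma_{G,S}$ via $Q_{simp}$ or $Q_{mod}$, exhibit the left-regular embedding $h\mapsto\pi_h$, then use cycle triplets together with the finer ``number of neighbours lying on a $3$-cycle'' invariant (for $n=2$) or the uniqueness of the triplet $\{4,6,2n+2\}$ at $x_{6,g}$ (for $n>2$) to force every automorphism to preserve the first index and hence descend to a colour-preserving automorphism of $\mathcal{C}_{G,S}$. The paper sketches exactly these rigidity steps and then, as you do, defers the remainder to Frucht's original reduction to the Cayley graph lemma.
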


Frucht's original cubic graphs can be 3-edge coloured with a function $c:E(\Gamma)\to \{r,g,b \}$ as follows, where we identify the quadratic form $Q$ with the edges of $\Gamma$:
\begin{align}\label{frucht_edge_colouring}
\begin{split}
& c\left(R\cup Q_{1,4}\cup Q_{2,3}\right)=r,\;
   c\left(Q_{1,5}\cup Q_{2,4}\cup Q_{3,6}\cup\bigcup_{i=2}^{n}Q_{2i+3,2i+4}\right)=g,\\
  & c\left(Q_{1,2}\cup Q_{3,5}\cup Q_{4,2n+4}\cup\bigcup_{i=2}^{n}Q_{2i+2,2i+3}\right)=b.
\end{split}
\end{align}

We can obtain 3-edge colouring for the simplified and modified graphs by contracting and adding 3-cycles, respectively. For our simplified graph, i.e.\ in the case for $n=2$ generators, this yields a 3-edge colouring $c_{simp}$ for $Q_{simp}$ as follows:
\begin{align*}
  c_{simp}\left(R_{simp}\cup Q_{2,1}\right)=r,\;
  c_{simp}\left(Q_{2,3}\cup Q_{1,4} \cup Q_{5,6}\right)=g,\;
  c_{simp}\left(Q_{1,3}\cup Q_{2,5}\cup Q_{4,5}\right)=b.
\end{align*}

It is straightforward to check that the quadratic forms $Q$ and $Q_{simp}$ define three-edge coloured cubic graphs with edge colouring $c$ and $c_{simp}$, respectively.

In Figure \ref{q_8_graph} an example of Frucht's original construction for $Q_8$ with two generators is shown.

\begin{figure}[H]
\begin{minipage}{.49\textwidth}
    \centering
    \resizebox{!}{7cm}{\input{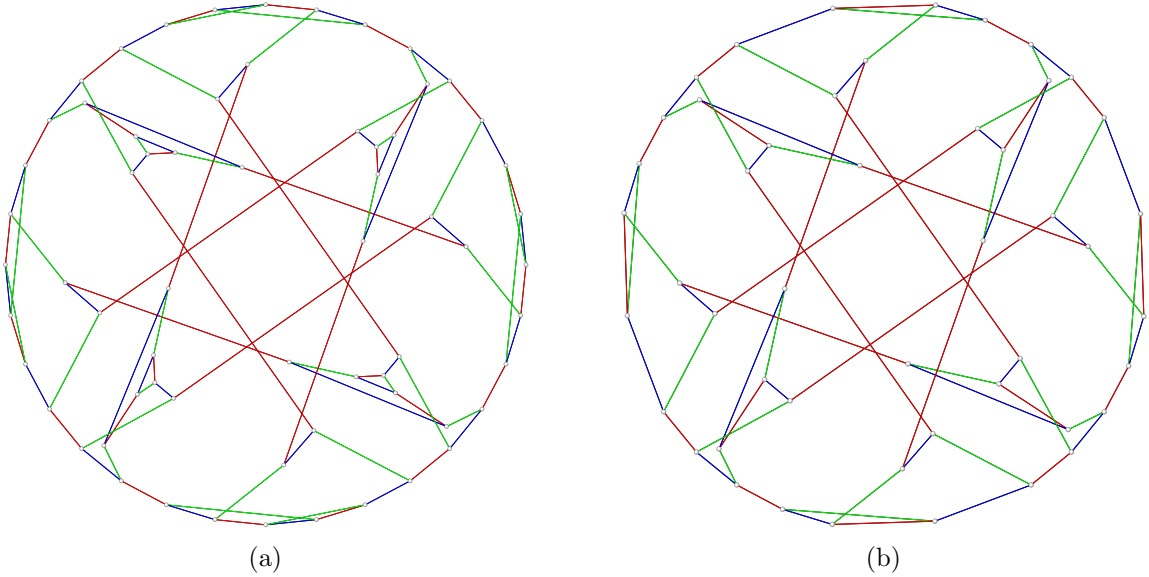}}
    \subcaption{}
    \label{q_8_graph}
\end{minipage}
\begin{minipage}{.49\textwidth}
    \centering
    \resizebox{!}{7cm}{\begin{tikzpicture}[vertexBall, edgeDouble=nolabels, faceStyle=nolabels, scale=2]

\coordinate (V3) at (4.9 , -0.975);
\coordinate (V5) at (4.615 , -1.91);
\coordinate (V6) at (3.85099478079332 , 0.3466597077244259);
\coordinate (V7) at (3.187202505219207 , 0.9244164926931107);
\coordinate (V8) at (4.155 , 2.775);
\coordinate (V11) at (0.975 , 4.9);
\coordinate (V13) at (1.91 , 4.615);
\coordinate (V14) at (-0.3466597077244259 , 3.85099478079332);
\coordinate (V15) at (-0.9244164926931107 , 3.187202505219207);
\coordinate (V16) at (-2.775 , 4.155);
\coordinate (V19) at (-3.475550104384133 , 3.110120041753654);
\coordinate (V21) at (-0.4542181628392484 , 1.870562630480167);
\coordinate (V22) at (-4.155 , 2.775);
\coordinate (V23) at (-3.535 , 3.535);
\coordinate (V24) at (-2.568410229645094 , 1.768387265135699);
\coordinate (V27) at (3.475550104384133 , -3.110120041753654);
\coordinate (V29) at (0.4542181628392484 , -1.870562630480167);
\coordinate (V30) at (4.155 , -2.775);
\coordinate (V31) at (3.535 , -3.535);
\coordinate (V32) at (2.568410229645094 , -1.768387265135699);
\coordinate (V35) at (-0.975 , -4.9);
\coordinate (V37) at (-1.91 , -4.615);
\coordinate (V38) at (0.3466597077244259 , -3.85099478079332);
\coordinate (V39) at (0.9244164926931107 , -3.187202505219207);
\coordinate (V40) at (2.775 , -4.155);
\coordinate (V43) at (-4.9 , 0.975);
\coordinate (V45) at (-4.615 , 1.91);
\coordinate (V46) at (-3.85099478079332 , -0.3466597077244259);
\coordinate (V47) at (-3.187202505219207 , -0.9244164926931107);
\coordinate (V48) at (-4.155 , -2.775);
\coordinate (V51) at (-3.110120041753654 , -3.475550104384133);
\coordinate (V53) at (-1.870562630480167 , -0.4542181628392484);
\coordinate (V54) at (-2.775 , -4.155);
\coordinate (V55) at (-3.535 , -3.535);
\coordinate (V56) at (-1.768387265135699 , -2.568410229645094);
\coordinate (V59) at (3.110120041753654 , 3.475550104384133);
\coordinate (V61) at (1.870562630480167 , 0.4542181628392484);
\coordinate (V62) at (2.775 , 4.155);
\coordinate (V63) at (3.535 , 3.535);
\coordinate (V64) at (1.768387265135699 , 2.568410229645094);
\coordinate (V65) at (4.838333333333333 , 0.9616666666666667);
\coordinate (V66) at (-0.9616666666666667 , 4.838333333333333);
\coordinate (V67) at (-2.166059498956159 , 2.249689979123174);
\coordinate (V68) at (2.166059498956159 , -2.249689979123174);
\coordinate (V69) at (0.9616666666666667 , -4.838333333333333);
\coordinate (V70) at (-4.838333333333333 , -0.9616666666666667);
\coordinate (V71) at (-2.249689979123174 , -2.166059498956159);
\coordinate (V72) at (2.249689979123174 , 2.166059498956159);
\draw[edge=green] (V5) -- node[edgeLabel] {$3$} (V65);
\draw[edge=red] (V3) -- node[edgeLabel] {$4$} (V65);
\draw[edge=blue] (V3) -- node[edgeLabel] {$6$} (V5);
\draw[edge=green] (V3) -- node[edgeLabel] {$7$} (V6);
\draw[edge=blue] (V8) -- node[edgeLabel] {$8$} (V65);
\draw[edge=red] (V5) -- node[edgeLabel] {$9$} (V30);
\draw[edge=blue] (V6) -- node[edgeLabel] {$10$} (V7);
\draw[edge=red] (V6) -- node[edgeLabel] {$11$} (V21);
\draw[edge=green] (V7) -- node[edgeLabel] {$12$} (V8);
\draw[edge=red] (V7) -- node[edgeLabel] {$13$} (V56);
\draw[edge=red] (V8) -- node[edgeLabel] {$14$} (V63);
\draw[edge=green] (V13) -- node[edgeLabel] {$17$} (V66);
\draw[edge=red] (V11) -- node[edgeLabel] {$18$} (V66);
\draw[edge=blue] (V11) -- node[edgeLabel] {$20$} (V13);
\draw[edge=green] (V11) -- node[edgeLabel] {$21$} (V14);
\draw[edge=blue] (V16) -- node[edgeLabel] {$22$} (V66);
\draw[edge=red] (V13) -- node[edgeLabel] {$23$} (V62);
\draw[edge=blue] (V14) -- node[edgeLabel] {$24$} (V15);
\draw[edge=red] (V14) -- node[edgeLabel] {$25$} (V53);
\draw[edge=green] (V15) -- node[edgeLabel] {$26$} (V16);
\draw[edge=red] (V15) -- node[edgeLabel] {$27$} (V32);
\draw[edge=red] (V16) -- node[edgeLabel] {$28$} (V23);
\draw[edge=green] (V21) -- node[edgeLabel] {$31$} (V67);
\draw[edge=red] (V19) -- node[edgeLabel] {$32$} (V67);
\draw[edge=blue] (V19) -- node[edgeLabel] {$34$} (V21);
\draw[edge=green] (V19) -- node[edgeLabel] {$35$} (V22);
\draw[edge=blue] (V24) -- node[edgeLabel] {$36$} (V67);
\draw[edge=blue] (V22) -- node[edgeLabel] {$37$} (V23);
\draw[edge=red] (V22) -- node[edgeLabel] {$38$} (V45);
\draw[edge=green] (V23) -- node[edgeLabel] {$39$} (V24);
\draw[edge=red] (V24) -- node[edgeLabel] {$40$} (V39);
\draw[edge=green] (V29) -- node[edgeLabel] {$43$} (V68);
\draw[edge=red] (V27) -- node[edgeLabel] {$44$} (V68);
\draw[edge=blue] (V27) -- node[edgeLabel] {$46$} (V29);
\draw[edge=green] (V27) -- node[edgeLabel] {$47$} (V30);
\draw[edge=blue] (V32) -- node[edgeLabel] {$48$} (V68);
\draw[edge=red] (V29) -- node[edgeLabel] {$49$} (V46);
\draw[edge=blue] (V30) -- node[edgeLabel] {$50$} (V31);
\draw[edge=green] (V31) -- node[edgeLabel] {$51$} (V32);
\draw[edge=red] (V31) -- node[edgeLabel] {$52$} (V40);
\draw[edge=green] (V37) -- node[edgeLabel] {$55$} (V69);
\draw[edge=red] (V35) -- node[edgeLabel] {$56$} (V69);
\draw[edge=blue] (V35) -- node[edgeLabel] {$58$} (V37);
\draw[edge=green] (V35) -- node[edgeLabel] {$59$} (V38);
\draw[edge=blue] (V40) -- node[edgeLabel] {$60$} (V69);
\draw[edge=red] (V37) -- node[edgeLabel] {$61$} (V54);
\draw[edge=blue] (V38) -- node[edgeLabel] {$62$} (V39);
\draw[edge=red] (V38) -- node[edgeLabel] {$63$} (V61);
\draw[edge=green] (V39) -- node[edgeLabel] {$64$} (V40);
\draw[edge=green] (V45) -- node[edgeLabel] {$67$} (V70);
\draw[edge=red] (V43) -- node[edgeLabel] {$68$} (V70);
\draw[edge=blue] (V43) -- node[edgeLabel] {$70$} (V45);
\draw[edge=green] (V43) -- node[edgeLabel] {$71$} (V46);
\draw[edge=blue] (V48) -- node[edgeLabel] {$72$} (V70);
\draw[edge=blue] (V46) -- node[edgeLabel] {$73$} (V47);
\draw[edge=green] (V47) -- node[edgeLabel] {$74$} (V48);
\draw[edge=red] (V47) -- node[edgeLabel] {$75$} (V64);
\draw[edge=red] (V48) -- node[edgeLabel] {$76$} (V55);
\draw[edge=green] (V53) -- node[edgeLabel] {$79$} (V71);
\draw[edge=red] (V51) -- node[edgeLabel] {$80$} (V71);
\draw[edge=blue] (V51) -- node[edgeLabel] {$82$} (V53);
\draw[edge=green] (V51) -- node[edgeLabel] {$83$} (V54);
\draw[edge=blue] (V56) -- node[edgeLabel] {$84$} (V71);
\draw[edge=blue] (V54) -- node[edgeLabel] {$85$} (V55);
\draw[edge=green] (V55) -- node[edgeLabel] {$86$} (V56);
\draw[edge=green] (V61) -- node[edgeLabel] {$89$} (V72);
\draw[edge=red] (V59) -- node[edgeLabel] {$90$} (V72);
\draw[edge=blue] (V59) -- node[edgeLabel] {$92$} (V61);
\draw[edge=green] (V59) -- node[edgeLabel] {$93$} (V62);
\draw[edge=blue] (V64) -- node[edgeLabel] {$94$} (V72);
\draw[edge=blue] (V62) -- node[edgeLabel] {$95$} (V63);
\draw[edge=green] (V63) -- node[edgeLabel] {$96$} (V64);
\vertexLabelR[]{V3}{left}{$$}
\vertexLabelR[]{V5}{left}{$$}
\vertexLabelR[]{V6}{left}{$$}
\vertexLabelR[]{V7}{left}{$$}
\vertexLabelR[]{V8}{left}{$$}
\vertexLabelR[]{V11}{left}{$$}
\vertexLabelR[]{V13}{left}{$$}
\vertexLabelR[]{V14}{left}{$$}
\vertexLabelR[]{V15}{left}{$$}
\vertexLabelR[]{V16}{left}{$$}
\vertexLabelR[]{V19}{left}{$$}
\vertexLabelR[]{V21}{left}{$$}
\vertexLabelR[]{V22}{left}{$$}
\vertexLabelR[]{V23}{left}{$$}
\vertexLabelR[]{V24}{left}{$$}
\vertexLabelR[]{V27}{left}{$$}
\vertexLabelR[]{V29}{left}{$$}
\vertexLabelR[]{V30}{left}{$$}
\vertexLabelR[]{V31}{left}{$$}
\vertexLabelR[]{V32}{left}{$$}
\vertexLabelR[]{V35}{left}{$$}
\vertexLabelR[]{V37}{left}{$$}
\vertexLabelR[]{V38}{left}{$$}
\vertexLabelR[]{V39}{left}{$$}
\vertexLabelR[]{V40}{left}{$$}
\vertexLabelR[]{V43}{left}{$$}
\vertexLabelR[]{V45}{left}{$$}
\vertexLabelR[]{V46}{left}{$$}
\vertexLabelR[]{V47}{left}{$$}
\vertexLabelR[]{V48}{left}{$$}
\vertexLabelR[]{V51}{left}{$$}
\vertexLabelR[]{V53}{left}{$$}
\vertexLabelR[]{V54}{left}{$$}
\vertexLabelR[]{V55}{left}{$$}
\vertexLabelR[]{V56}{left}{$$}
\vertexLabelR[]{V59}{left}{$$}
\vertexLabelR[]{V61}{left}{$$}
\vertexLabelR[]{V62}{left}{$$}
\vertexLabelR[]{V63}{left}{$$}
\vertexLabelR[]{V64}{left}{$$}
\vertexLabelR[]{V65}{left}{$$}
\vertexLabelR[]{V66}{left}{$$}
\vertexLabelR[]{V67}{left}{$$}
\vertexLabelR[]{V68}{left}{$$}
\vertexLabelR[]{V69}{left}{$$}
\vertexLabelR[]{V70}{left}{$$}
\vertexLabelR[]{V71}{left}{$$}
\vertexLabelR[]{V72}{left}{$$}
\end{tikzpicture}}
    \subcaption{}
    \label{Q8_red}
\end{minipage}
\caption{Cubic graphs with automorphism groups isomorphic to $Q_8$: (a)  Frucht's original construction on 64 nodes (b) Our modified construction on 48 nodes}
\end{figure}

The graph shown in Figure \ref{q_8_graph} has $8$ vertices of degree $3$ contracting these vertices corresponds to the simplification of Frucht's graph construction for groups generated by two elements. In Figure \ref{Q8_red}, we see the resulting cubic graph on $48$ nodes corresponding to our simplified construction with automorphism group isomorphic to $Q_8$. Iterating the process of contracting vertices of degree $3$ leads to a cubic graph with larger automorphism group on $32$ nodes.

\section{Cubic graphs with cyclic or dihedral automorphism group}\label{cyclic_dihedral_graph}
For a cyclic group $C_n$ with $n>2$ Frucht defines a graph $\Gamma_{C_n}$ with $6n$ vertices and cyclic automorphism group of order $n$. Let  $a_i,b_i,c_i,d_i,e_i,f_i$ for $i=1,\dots,n$ be indeterminates corresponding to the nodes of the graph $\Gamma_{C_n}$, where the edges are given by the quadratic form below:
$$Q_{C_n}=\sum_{i=1}^n (a_i b_i + a_i e_i + a_i f_i + b_i f_i + c_i d_i + c_i f_i + c_i e_i) +\sum_{j=1}^{n-1} (b_{j+1} e_j  + d_j d_{j+1}) + b_1 e_n  + d_1 d_n.$$

In Figure \ref{graph_c4} a planar embedding of this graph construction is shown for $n=4$.

\begin{theorem}[Frucht,\cite{frucht}]
For $n>2$, the quadratic form $Q_{C_n}$ defines a cubic graph $\Gamma_{C_n}$ with automorphism group isomorphic to $C_n$.
\end{theorem}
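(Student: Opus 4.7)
The plan is to establish both inclusions in $\Aut(\Gamma_{C_n}) \cong C_n$. For the forward inclusion $C_n \hookrightarrow \Aut(\Gamma_{C_n})$, I verify directly that the indexed cyclic shift $\tau: x_i \mapsto x_{i+1 \bmod n}$ applied to each symbol $x \in \{a,b,c,d,e,f\}$ preserves every summand of $Q_{C_n}$. The seven in-block terms $a_ib_i + a_ie_i + a_if_i + b_if_i + c_id_i + c_if_i + c_ie_i$ are carried identically across indices, while the wrap-around summands $b_1 e_n$ and $d_1 d_n$ close the cross-block families $\{b_{j+1}e_j\}$ and $\{d_jd_{j+1}\}$ cyclically under $\tau$.

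For the opposite inclusion I use a local analysis of the six-vertex block at each index, treating $n \geq 4$ first. A direct enumeration shows that the only $3$-cycles of $\Gamma_{C_n}$ are the $n$ triangles $T_i := \{a_i,b_i,f_i\}$, so any $\sigma \in \Aut(\Gamma_{C_n})$ induces a permutation $\pi \in S_n$ with $\sigma(T_i) = T_{\pi(i)}$. Within each $T_i$, both $a_i$ and $f_i$ lie on the $4$-cycle $a_ie_ic_if_i$, whereas a check of length-$2$ paths between any two distinct neighbours of $b_i$ shows that $b_i$ lies on no $4$-cycle; hence $\sigma(b_i)=b_{\pi(i)}$. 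To separate $a_i$ from $f_i$ I use the external neighbours: the external neighbour $e_i$ of $a_i$ is adjacent to the already-canonical triangle vertex $b_{i+1}$, while the external neighbour $c_i$ of $f_i$ is adjacent to no $b_j$. This forces $\sigma(a_i)=a_{\pi(i)}$, $\sigma(f_i)=f_{\pi(i)}$, and consequently $\sigma(e_i)=e_{\pi(i)}$, $\sigma(c_i)=c_{\pi(i)}$, $\sigma(d_i)=d_{\pi(i)}$.

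To pin down $\pi$, preservation of the $n$-cycle $d_1 d_2 \cdots d_n d_1$ forces $\pi(i{+}1)\equiv \pi(i)\pm 1 \pmod n$, so $\pi$ lies in the dihedral group $D_n$; preservation of the cross-edge chirality $e_i \sim b_{i+1}$ (together with $e_i \not\sim b_{i-1}$, which requires $n>2$) rules out all reflections, leaving $\pi \in C_n$. The case $n=3$ only adds the extra triangle $D=\{d_1,d_2,d_3\}$, which is distinguished from every $T_i$ by the fact that its three external neighbours $c_1,c_2,c_3$ are pairwise non-adjacent, whereas the external neighbours $\{e_i,e_{i-1},c_i\}$ of $T_i$ contain the edge $e_i c_i$. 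Thus $\sigma(D)=D$, $\sigma$ still permutes the $T_i$'s, and the preceding argument applies unchanged.

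The main obstacle is the within-triangle step: a raw cycle-triplet count yields the common multiset $\{3,4,5\}$ for both $a_i$ and $f_i$, so separating them genuinely requires the secondary, non-local observation that $e_i$ meets the canonical $b$-set while $c_i$ does not. This bootstrap — first pinning down the $b$-vertices by the $4$-cycle test and then using them to orient the remaining labels — is the delicate part of the proof, whereas the global consequences (the dihedral restriction from the $d$-cycle and the loss of reflections from the $e$-$b$ chirality) are essentially immediate once the local labels are fixed.
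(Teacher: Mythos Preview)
Your proof is correct. The paper does not supply its own proof of this theorem --- it simply attributes the result to Frucht --- so there is no detailed argument to compare against. That said, your approach is very much in the spirit of the cycle-triplet method that the paper describes as Frucht's technique and that the paper itself uses in its proof of the $D_n$ analogue: identify the short cycles (here the triangles $T_i$ and the $4$-cycle $a_ie_ic_if_i$) to pin down the local labels, then use the global $d$-cycle together with the oriented cross-edges $e_i\sim b_{i+1}$ to reduce from $D_n$ to $C_n$.

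The one place where your argument goes slightly beyond a naive cycle-triplet count is precisely where you flag it: the vertices $a_i$ and $f_i$ share the triplet $\{3,4,5\}$, so you need the secondary observation that the external neighbour $e_i$ of $a_i$ meets the already-identified $b$-class while $c_i$ does not. This bootstrap is the right fix, and it is handled cleanly. Your treatment of $n=3$ (distinguishing the extra triangle $D=\{d_1,d_2,d_3\}$ from the $T_i$ via the adjacency pattern of external neighbours) is also correct; I verified that the external neighbours $\{e_i,e_{i-1},c_i\}$ of $T_i$ contain exactly the edge $e_ic_i$ while $\{c_1,c_2,c_3\}$ is independent. No gaps.
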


The dihedral groups $D_n$ are covered by Frucht's more general construction. However for $n\geq 4$, we define a cubic graph $\Gamma_{D_n}$ with $3n$ instead of $8n$ nodes and a dihedral automorphism group.
Let  $a_i,b_i,c_i,d_i$ for $i=1,\dots,n$ be indeterminates corresponding to the nodes of the graph $\Gamma_{D_n}$, where the edges are given by the following quadratic form:
$$Q_{D_n}=\sum_{i=1}^n ( a_i b_i  + a_i c_i + c_i d_i + b_i c_i) +\sum_{j=1}^{n-1} (a_j b_{j+1} + d_j d_{j+1}) + a_n b_1 + d_1 d_n.$$
\begin{theorem}
For $n\geq 4$, the automorphism group of the cubic graph $\Gamma_{D_n},$ defined by the above quadratic form, is isomorphic to $D_n$.
\end{theorem}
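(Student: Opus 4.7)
The approach is to exhibit an injective homomorphism $D_n\hookrightarrow\Aut(\Gamma_{D_n})$ and then, using structural invariants of the graph, to show that every automorphism lies in the image. For the first part, I take $\rho$ to be the cyclic shift $x_i\mapsto x_{i+1}$ for $x\in\{a,b,c,d\}$ (indices mod $n$), and $\sigma$ the involution $a_i\mapsto b_{-i}$, $b_i\mapsto a_{-i}$, $c_i\mapsto c_{-i}$, $d_i\mapsto d_{-i}$. A direct check on each summand of $Q_{D_n}$ shows edge preservation; for instance, the coupling edge $a_j b_{j+1}$ is sent by $\sigma$ to $b_{-j} a_{-j-1}=a_{-j-1}b_{-j}$, which has the form $a_k b_{k+1}$ for $k=-j-1$. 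The relations $\rho^n=\sigma^2=e$ and $\sigma\rho\sigma=\rho^{-1}$ are straightforward, so $\langle\rho,\sigma\rangle\cong D_n$ acts faithfully on $\Gamma_{D_n}$.

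For the converse, I would first argue that for $n\geq 4$ the only $3$-cycles of $\Gamma_{D_n}$ are the triangles $T_i=\{a_i,b_i,c_i\}$: the $n$-cycle $d_1\ldots d_n$ is chordless once $n\geq 4$; no triangle can mix $d$-vertices with others because the only edges from $d_i$ into $\{a,b,c\}$-vertices are the spokes $c_i d_i$ and $c_i$ is not adjacent to $d_{i\pm 1}$; and the remaining checks on the neighbourhoods of $a_i$ and $b_i$ produce no further triangles. As a consequence, any automorphism $\pi$ preserves the set $\{d_1,\dots,d_n\}$ of vertices lying in no $3$-cycle, permutes the triangles $T_i$, and within each triangle fixes the ``spoke vertex'' $c_i$, characterised as the unique vertex of $T_i$ whose exterior neighbour is not contained in any $3$-cycle. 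Hence there exists $\tau\in\mathrm{Sym}(\{1,\dots,n\})$ with $\pi(c_i)=c_{\tau(i)}$ and $\pi(d_i)=d_{\tau(i)}$; since $d_1 d_2 \ldots d_n$ is an $n$-cycle, $\tau$ must lie in $\Aut(C_n)\cong D_n$.

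It then remains to determine how $\pi$ acts on the pairs $\{a_i,b_i\}$. Introducing $\epsilon_i\in\{0,1\}$ by ``$\pi$ swaps $a_i$ and $b_i$ inside $T_{\tau(i)}$ iff $\epsilon_i=1$'', one checks that the within-triangle edges of $T_i$ are preserved for either value of $\epsilon_i$, so the constraints arise solely from the coupling edges $a_j b_{j+1}$. A case split on $(\epsilon_j,\epsilon_{j+1})\in\{0,1\}^2$ and on whether $\tau$ is a rotation $i\mapsto i+k$ or a reflection $i\mapsto -i+k$ shows that $\pi(a_j b_{j+1})$ is an edge only in the uniform cases $\epsilon_i\equiv 0$ with $\tau$ a rotation, or $\epsilon_i\equiv 1$ with $\tau$ a reflection; both patterns correspond exactly to elements of $\langle\rho,\sigma\rangle$. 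Hence $\pi\in\langle\rho,\sigma\rangle\cong D_n$. The main obstacle is this final bookkeeping step, reconciling the swap pattern with the type of $\tau$; the hypothesis $n\geq 4$ is indispensable at the $3$-cycle step, since for $n=3$ the additional triangle $\{d_1,d_2,d_3\}$ would enlarge the symmetry group.
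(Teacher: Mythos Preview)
Your proof is correct and follows essentially the same strategy as the paper's: identify the triangles $T_i=\{a_i,b_i,c_i\}$ as the only $3$-cycles (using $n\geq 4$), deduce that any automorphism acts dihedrally on the index set via the $d$-cycle, and then pin down the residual $a_i\leftrightarrow b_i$ ambiguity using the coupling edges $a_jb_{j+1}$. The paper proceeds in a slightly different order---it first reads off the dihedral action on the $d_i$ from the preserved $n$-cycle $(d_1,\dots,d_n)$, then propagates to $c_i$ via the spokes $c_id_i$, and finally asserts (without the explicit $\epsilon$-bookkeeping) that the incidence structure forces the correct images of $a_i,b_i$---whereas you first isolate $c_i$ as the unique spoke vertex of $T_i$ and then recover the action on $d_i$. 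Your version is more detailed: you explicitly verify $D_n\hookrightarrow\Aut(\Gamma_{D_n})$, you justify why the $T_i$ are the only triangles, and your $(\epsilon_j,\epsilon_{j+1})$ case split makes precise what the paper summarises as ``by examining the incidence structure''. Substantively, though, the two arguments coincide.
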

\begin{proof}
Let $\pi$ be an automorphism of $\Gamma_{D_n}.$
Note, that $\Gamma_{D_n}$ has exactly $n$ cycles of length 3, 
namely
\[
(a_1,b_1,c_1),\ldots,(a_n,b_n,c_n).
\]
We know, that the image of a cycle under $\pi$ has to be a cycle of the same length.
Therefore $\pi $ permutes the cycles of length 3 and there exists an $1\leq i \leq n$ with $\pi((a_1,b_1,c_1))=(a_i,b_i,c_i).$ 
Moreover, the vertices $d_1,\ldots,d_n$ must be permuted in such a way that $(\pi(d_1),\ldots,\pi(d_n))$ corresponds to the cycle $(d_1,\ldots,d_n).$ So, we conclude that $\pi(d_{1+j})=d_{i+j}$ or $\pi(d_{1+j})=d_{i-j}$, where $j \in \{1,\ldots,n\}$ and we read the subscripts modulo $n.$
If $\pi(d_{1+j})=d_{i+j}$, then $\pi(c_{1+j})$ is equal to $c_{i+j}$.
By examining the incidence structure of $\Gamma_{D_n}$, this results in $a_{i+j}=a_{i+j}$ and $b_{i+j}=b_{i+j}.$ So, $\pi$ can be written as the $i$-th power of the permutation $\omega=(a_1,\ldots,a_n)\ldots(f_1,\ldots,f_n).$ If $\pi(d_{1+j})=d_{i-j},$ we  deduce that $\pi$ maps $a_{i+j}$, $b_{i+j}$ resp.\ $c_{i+j}$ onto $b_{i-j}$, $a_{i-j}$ resp.\ $c_{i+j}$ by the same argument. 
Thus, $\pi$ is given by the product of $\omega^i$ and the involution $$s=\prod_{j=0}^{\lfloor\frac{n}{2}\rfloor} (a_{1-j},b_{1+j})(d_{1-j},d_{1+j})(c_{1-j},c_{1+j}) .$$ 
So, it follows that $\Aut(\Gamma_{D_n})=\langle \omega,s \rangle \cong D_n.$
\end{proof}

Next, we compute straight-line embeddings of the cubic graphs $\Gamma_{C_n}$ for $n\geq 3$ and $\Gamma_{D_n}$ for $n\geq 4$.  Let $n\geq 3$ be fixed. For simplicity we denote the set of vertices of the graph $\Gamma_{C_n}$ resp.\ $\Gamma_{D_n}$ by $V$ resp.\ $V'.$

We make use of the following remark to construct the corresponding embeddings of the above graphs.
\begin{remark}
Let $n\geq 3$  be a natural number. The permutation $\pi =(a_1,\ldots,a_n)\ldots(f_1,\ldots,f_n)$ gives rise to an automorphism of $\Gamma_{C_n}$ with $\pi^n=id.$ Thus, we conclude that 
$\Aut(\Gamma)=\langle \pi\rangle$ and that the automorphism group of the graph satisfies
$$\Aut(\Gamma_{C_n}) \cong \langle
    \underbrace{\left(\begin{array}{cc}
         \cos(\alpha)& -\sin(\alpha) \\
         \sin(\alpha)& \cos(\alpha)
    \end{array}\right)}_{M_\alpha\coloneqq}
\rangle \leq GL_2(\mathbb{R}),$$
where $\alpha=\frac{2\pi}{n}.$
\end{remark}

By assigning 2D-coordinates to the vertices in $V$, as described in the following remark, a desired embedding is constructed.
\begin{remark}
    Let $n\geq 3$ be a natural number and $v,w_1,w_2$ be defined by
    $v\coloneqq(1,0)^t,w_1\coloneqq(\cos(\alpha)-1,\sin(\alpha))^t,w_2\coloneqq(\sin(\alpha),1-\cos(\alpha))^t$. Furthermore, let $\phi: V\to \mathbb{R}^2$ be a function defined by
\begin{align*}
 &       \phi(a_1)=v+\frac{1}{2}w_1, \phi(b_1)=v+\frac{1}{4}w_1,\phi(c_1)=v+\frac{1}{2}w_1+2w_2,\\  
&\phi(d_1)=v+\frac{1}{2}w_1+3w_2, \phi(e_1)=v+\frac{3}{4}w_1,\phi(f_1)=v+\frac{1}{2}w_1+w_2,
    \end{align*}
    and 
    \begin{align*}
        &\phi (a_{i+1})={(M_\alpha)}^{i}\phi(a_1),
        \phi (b_{i+1})={(M_\alpha)}^{i}\phi(b_1),
        \phi (c_{i+1})={(M_\alpha)}^{i}\phi(c_1),\\&
       \phi (d_{i+1})={(M_\alpha)}^{i}\phi(d_1),
        \phi (e_{i+1})={(M_\alpha)}^{i}\phi(e_1),
        \phi (f_{i+1})={(M_\alpha)}^{i}\phi(f_1)
    \end{align*}
    where $\alpha=\frac{2\pi}{n}$ and $i \in \{1,\ldots,n-1\}.$
Then $\phi$ gives rise to a planar straight line embedding of $G.$
\end{remark}

Figure \ref{graph_c4} and \ref{graph_c7} show the constructed planar embedding for $n=4$ and $n=7.$
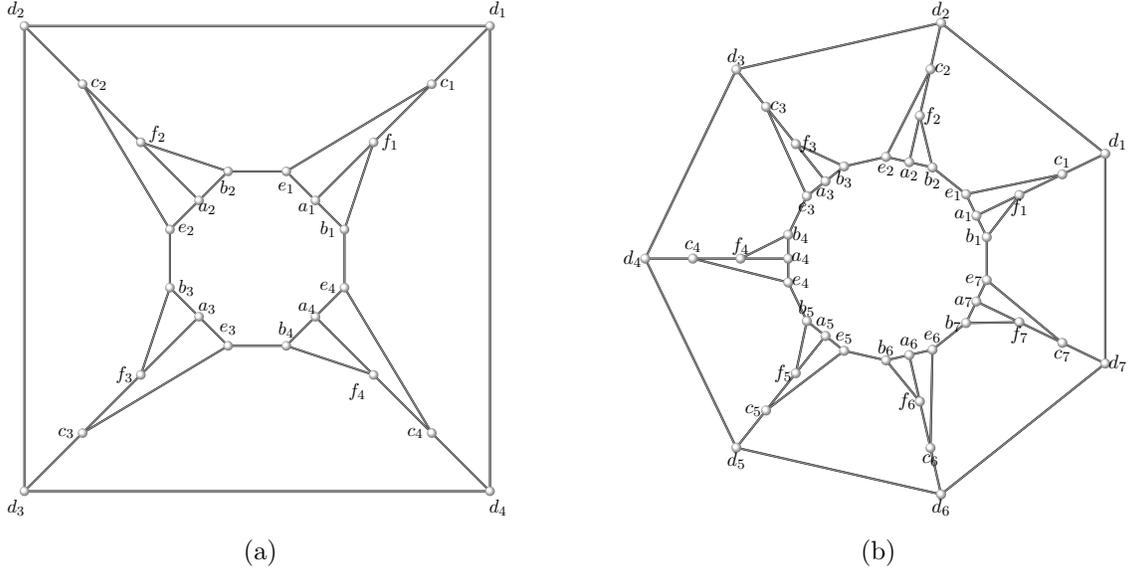
\begin{figure}[H]
\begin{minipage}{.49\textwidth}
    \centering
    \resizebox{!}{7cm}{\begin{tikzpicture}[vertexBall, edgeDouble=nolabels, faceStyle=nolabels, scale=1.7]

\coordinate (V1) at (0.7071067811865475 , 0.7071067811865475);
\coordinate (V2) at (1.060660171779821 , 0.3535533905932737);
\coordinate (V3) at (2.121320343559642 , 2.121320343559642);
\coordinate (V4) at (2.82842712474619 , 2.82842712474619);
\coordinate (V5) at (0.353553390593274 , 1.060660171779821);
\coordinate (V6) at (1.414213562373095 , 1.414213562373095);
\coordinate (V7) at (-0.7071067811865475 , 0.7071067811865475);
\coordinate (V8) at (-0.3535533905932737 , 1.060660171779821);
\coordinate (V9) at (-2.121320343559642 , 2.121320343559642);
\coordinate (V10) at (-2.82842712474619 , 2.82842712474619);
\coordinate (V11) at (-1.060660171779821 , 0.353553390593274);
\coordinate (V12) at (-1.414213562373095 , 1.414213562373095);
\coordinate (V13) at (-0.7071067811865476 , -0.7071067811865475);
\coordinate (V14) at (-1.060660171779821 , -0.3535533905932736);
\coordinate (V15) at (-2.121320343559643 , -2.121320343559642);
\coordinate (V16) at (-2.82842712474619 , -2.828427124746189);
\coordinate (V17) at (-0.3535533905932741 , -1.060660171779821);
\coordinate (V18) at (-1.414213562373095 , -1.414213562373095);
\coordinate (V19) at (0.7071067811865475 , -0.7071067811865476);
\coordinate (V20) at (0.3535533905932736 , -1.060660171779821);
\coordinate (V21) at (2.121320343559642 , -2.121320343559643);
\coordinate (V22) at (2.828427124746189 , -2.82842712474619);
\coordinate (V23) at (1.060660171779821 , -0.3535533905932741);
\coordinate (V24) at (1.414213562373095 , -1.414213562373095);
\draw[edge] (V3) -- node[edgeLabel] {$1$} (V4);
\draw[edge] (V21) -- node[edgeLabel] {$2$} (V22);
\draw[edge] (V3) -- node[edgeLabel] {$3$} (V6);
\draw[edge] (V21) -- node[edgeLabel] {$4$} (V23);
\draw[edge] (V2) -- node[edgeLabel] {$5$} (V6);
\draw[edge] (V4) -- node[edgeLabel] {$6$} (V22);
\draw[edge] (V2) -- node[edgeLabel] {$7$} (V23);
\draw[edge] (V9) -- node[edgeLabel] {$8$} (V10);
\draw[edge] (V3) -- node[edgeLabel] {$9$} (V5);
\draw[edge] (V9) -- node[edgeLabel] {$10$} (V12);
\draw[edge] (V8) -- node[edgeLabel] {$11$} (V12);
\draw[edge] (V4) -- node[edgeLabel] {$12$} (V10);
\draw[edge] (V5) -- node[edgeLabel] {$13$} (V8);
\draw[edge] (V15) -- node[edgeLabel] {$14$} (V16);
\draw[edge] (V9) -- node[edgeLabel] {$15$} (V11);
\draw[edge] (V15) -- node[edgeLabel] {$16$} (V18);
\draw[edge] (V14) -- node[edgeLabel] {$17$} (V18);
\draw[edge] (V10) -- node[edgeLabel] {$18$} (V16);
\draw[edge] (V11) -- node[edgeLabel] {$19$} (V14);
\draw[edge] (V15) -- node[edgeLabel] {$20$} (V17);
\draw[edge] (V21) -- node[edgeLabel] {$21$} (V24);
\draw[edge] (V20) -- node[edgeLabel] {$22$} (V24);
\draw[edge] (V16) -- node[edgeLabel] {$23$} (V22);
\draw[edge] (V17) -- node[edgeLabel] {$24$} (V20);
\draw[edge] (V1) -- node[edgeLabel] {$25$} (V6);
\draw[edge] (V1) -- node[edgeLabel] {$26$} (V5);
\draw[edge] (V7) -- node[edgeLabel] {$27$} (V12);
\draw[edge] (V7) -- node[edgeLabel] {$28$} (V11);
\draw[edge] (V13) -- node[edgeLabel] {$29$} (V18);
\draw[edge] (V13) -- node[edgeLabel] {$30$} (V17);
\draw[edge] (V19) -- node[edgeLabel] {$31$} (V24);
\draw[edge] (V19) -- node[edgeLabel] {$32$} (V23);
\draw[edge] (V1) -- node[edgeLabel] {$33$} (V2);
\draw[edge] (V7) -- node[edgeLabel] {$34$} (V8);
\draw[edge] (V13) -- node[edgeLabel] {$35$} (V14);
\draw[edge] (V19) -- node[edgeLabel] {$36$} (V20);
\vertexLabelR{V1}{left}{$ $}
\vertexLabelR{V2}{left}{$ $}
\vertexLabelR{V3}{left}{$ $}
\vertexLabelR{V4}{left}{$ $}
\vertexLabelR{V5}{left}{$ $}
\vertexLabelR{V6}{left}{$ $}
\vertexLabelR{V7}{left}{$ $}
\vertexLabelR{V8}{left}{$ $}
\vertexLabelR{V9}{left}{$ $}
\vertexLabelR{V10}{left}{$ $}
\vertexLabelR{V11}{left}{$ $}
\vertexLabelR{V12}{left}{$ $}
\vertexLabelR{V13}{left}{$ $}
\vertexLabelR{V14}{left}{$ $}
\vertexLabelR{V15}{left}{$ $}
\vertexLabelR{V16}{left}{$ $}
\vertexLabelR{V17}{left}{$ $}
\vertexLabelR{V18}{left}{$ $}
\vertexLabelR{V19}{left}{$ $}
\vertexLabelR{V20}{left}{$ $}
\vertexLabelR{V21}{left}{$ $}
\vertexLabelR{V22}{left}{$ $}
\vertexLabelR{V23}{left}{$ $}
\vertexLabelR{V24}{left}{$ $}
\node at (0.7071067811865475-0.1 , 0.7071067811865475-0.1) {$a_1$};
\node at (1.060660171779821-0.2 , 0.3535533905932737) {$b_1$};
\node at (2.121320343559642+0.2 , 2.121320343559642) {$c_1$};
\node at (2.82842712474619+0.1 , 2.82842712474619+0.2) {$d_1$};
\node at (0.353553390593274 , 1.060660171779821-0.2) {$e_1$};
\node at (1.414213562373095 +0.2, 1.414213562373095) {$f_1$};
\node at (-0.7071067811865475+0.1 , 0.7071067811865475-0.1) {$a_2$};
\node at (-0.3535533905932737 , 1.060660171779821-0.2) {$b_2$};
\node at (-2.121320343559642+0.2 , 2.121320343559642) {$c_2$};
\node at (-2.82842712474619 -0.1, 2.82842712474619+0.2) {$d_2$};
\node at (-1.060660171779821 +0.2, 0.353553390593274) {$e_2$};
\node at (-1.414213562373095+0.2 , 1.414213562373095+0.1) {$f_2$};
\node at (-0.7071067811865476+0.1 , -0.7071067811865475+0.1) {$a_3$};
\node at (-1.060660171779821+0.2 , -0.3535533905932736) {$b_3$};
\node at (-2.121320343559643-0.2 , -2.121320343559642) {$c_3$};
\node at (-2.82842712474619 -0.1, -2.828427124746189-0.2) {$d_3$};
\node at (-0.3535533905932741 , -1.060660171779821+0.2) {$e_3$};
\node at (-1.414213562373095-0.2 , -1.414213562373095) {$f_3$};
\node at (0.7071067811865475-0.1 , -0.7071067811865476+0.1) {$a_4$};
\node at (0.3535533905932736 , -1.060660171779821+0.2) {$b_4$};
\node at (2.121320343559642 -0.2, -2.121320343559643) {$c_4$};
\node at (2.828427124746189 +0.1, -2.82842712474619-0.2) {$d_4$};
\node at (1.060660171779821-0.2 , -0.3535533905932741) {$e_4$};
\node at (1.414213562373095 -0.2, -1.414213562373095-0.2) {$f_4$};
\end{tikzpicture}}
\subcaption{}
\label{graph_c4}
\end{minipage}
\begin{minipage}{.49\textwidth}
    \centering
    \resizebox{!}{7cm}{\begin{tikzpicture}[vertexBall, edgeDouble=nolabels, faceStyle=nolabels, scale=2.5]

\coordinate (V1) at (0.704405825649691 , 0.3392239669730524);
\coordinate (V2) at (0.7860866519424037 , 0.1696119834865262);
\coordinate (V3) at (1.382853759595796 , 0.6659472721439031);
\coordinate (V4) at (1.722077726568848 , 0.8293089247293285);
\coordinate (V5) at (0.6227249993569783 , 0.5088359504595785);
\coordinate (V6) at (1.043629792622743 , 0.5025856195584777);
\coordinate (V7) at (0.1739738716752359 , 0.7622293348805765);
\coordinate (V8) at (0.3575090223697507 , 0.7203386344133406);
\coordinate (V9) at (0.3415366735441793 , 1.496369937658636);
\coordinate (V10) at (0.4253180744786508 , 1.863440239047665);
\coordinate (V11) at (-0.009561279019278857 , 0.8041200353478124);
\coordinate (V12) at (0.2577552726097076 , 1.129299636269606);
\coordinate (V13) at (-0.4874639560909119 , 0.6112604669781575);
\coordinate (V14) at (-0.3402801929023528 , 0.7286356013966028);
\coordinate (V15) at (-0.9569644937646932 , 1.199995519732393);
\coordinate (V16) at (-1.191714762601584 , 1.494363046109512);
\coordinate (V17) at (-0.6346477192794708 , 0.4938853325597122);
\coordinate (V18) at (-0.7222142249278024 , 0.9056279933552753);
\coordinate (V19) at (-0.78183148246803 , 1.776356839400251e-16);
\coordinate (V20) at (-0.7818314824680299 , 0.1882550990706334);
\coordinate (V21) at (-1.534851878750563 , 3.552713678800501e-16);
\coordinate (V22) at (-1.91136207689183 , 4.440892098500626e-16);
\coordinate (V23) at (-0.78183148246803 , -0.188255099070633);
\coordinate (V24) at (-1.158341680609296 , 2.664535259100376e-16);
\coordinate (V25) at (-0.4874639560909121 , -0.6112604669781574);
\coordinate (V26) at (-0.6346477192794712 , -0.4938853325597118);
\coordinate (V27) at (-0.9569644937646942 , -1.199995519732393);
\coordinate (V28) at (-1.191714762601585 , -1.494363046109511);
\coordinate (V29) at (-0.3402801929023533 , -0.7286356013966028);
\coordinate (V30) at (-0.7222142249278031 , -0.9056279933552752);
\coordinate (V31) at (0.1739738716752356 , -0.7622293348805766);
\coordinate (V32) at (-0.009561279019279235 , -0.8041200353478124);
\coordinate (V33) at (0.3415366735441785 , -1.496369937658636);
\coordinate (V34) at (0.4253180744786502 , -1.863440239047666);
\coordinate (V35) at (0.3575090223697505 , -0.720338634413341);
\coordinate (V36) at (0.2577552726097071 , -1.129299636269606);
\coordinate (V37) at (0.7044058256496911 , -0.3392239669730528);
\coordinate (V38) at (0.6227249993569782 , -0.5088359504595789);
\coordinate (V39) at (1.382853759595796 , -0.665947272143904);
\coordinate (V40) at (1.722077726568848 , -0.8293089247293293);
\coordinate (V41) at (0.7860866519424038 , -0.1696119834865267);
\coordinate (V42) at (1.043629792622743 , -0.5025856195584784);
\draw[edge] (V3) -- node[edgeLabel] {$1$} (V4);
\draw[edge] (V39) -- node[edgeLabel] {$2$} (V40);
\draw[edge] (V3) -- node[edgeLabel] {$3$} (V6);
\draw[edge] (V39) -- node[edgeLabel] {$4$} (V41);
\draw[edge] (V2) -- node[edgeLabel] {$5$} (V6);
\draw[edge] (V4) -- node[edgeLabel] {$6$} (V40);
\draw[edge] (V2) -- node[edgeLabel] {$7$} (V41);
\draw[edge] (V9) -- node[edgeLabel] {$8$} (V10);
\draw[edge] (V3) -- node[edgeLabel] {$9$} (V5);
\draw[edge] (V9) -- node[edgeLabel] {$10$} (V12);
\draw[edge] (V8) -- node[edgeLabel] {$11$} (V12);
\draw[edge] (V4) -- node[edgeLabel] {$12$} (V10);
\draw[edge] (V5) -- node[edgeLabel] {$13$} (V8);
\draw[edge] (V15) -- node[edgeLabel] {$14$} (V16);
\draw[edge] (V9) -- node[edgeLabel] {$15$} (V11);
\draw[edge] (V15) -- node[edgeLabel] {$16$} (V18);
\draw[edge] (V14) -- node[edgeLabel] {$17$} (V18);
\draw[edge] (V10) -- node[edgeLabel] {$18$} (V16);
\draw[edge] (V11) -- node[edgeLabel] {$19$} (V14);
\draw[edge] (V21) -- node[edgeLabel] {$20$} (V22);
\draw[edge] (V15) -- node[edgeLabel] {$21$} (V17);
\draw[edge] (V21) -- node[edgeLabel] {$22$} (V24);
\draw[edge] (V20) -- node[edgeLabel] {$23$} (V24);
\draw[edge] (V16) -- node[edgeLabel] {$24$} (V22);
\draw[edge] (V17) -- node[edgeLabel] {$25$} (V20);
\draw[edge] (V27) -- node[edgeLabel] {$26$} (V28);
\draw[edge] (V21) -- node[edgeLabel] {$27$} (V23);
\draw[edge] (V27) -- node[edgeLabel] {$28$} (V30);
\draw[edge] (V26) -- node[edgeLabel] {$29$} (V30);
\draw[edge] (V22) -- node[edgeLabel] {$30$} (V28);
\draw[edge] (V23) -- node[edgeLabel] {$31$} (V26);
\draw[edge] (V33) -- node[edgeLabel] {$32$} (V34);
\draw[edge] (V27) -- node[edgeLabel] {$33$} (V29);
\draw[edge] (V33) -- node[edgeLabel] {$34$} (V36);
\draw[edge] (V32) -- node[edgeLabel] {$35$} (V36);
\draw[edge] (V28) -- node[edgeLabel] {$36$} (V34);
\draw[edge] (V29) -- node[edgeLabel] {$37$} (V32);
\draw[edge] (V33) -- node[edgeLabel] {$38$} (V35);
\draw[edge] (V39) -- node[edgeLabel] {$39$} (V42);
\draw[edge] (V38) -- node[edgeLabel] {$40$} (V42);
\draw[edge] (V34) -- node[edgeLabel] {$41$} (V40);
\draw[edge] (V35) -- node[edgeLabel] {$42$} (V38);
\draw[edge] (V1) -- node[edgeLabel] {$43$} (V6);
\draw[edge] (V1) -- node[edgeLabel] {$44$} (V5);
\draw[edge] (V7) -- node[edgeLabel] {$45$} (V12);
\draw[edge] (V7) -- node[edgeLabel] {$46$} (V11);
\draw[edge] (V13) -- node[edgeLabel] {$47$} (V18);
\draw[edge] (V13) -- node[edgeLabel] {$48$} (V17);
\draw[edge] (V19) -- node[edgeLabel] {$49$} (V24);
\draw[edge] (V19) -- node[edgeLabel] {$50$} (V23);
\draw[edge] (V25) -- node[edgeLabel] {$51$} (V30);
\draw[edge] (V25) -- node[edgeLabel] {$52$} (V29);
\draw[edge] (V31) -- node[edgeLabel] {$53$} (V36);
\draw[edge] (V31) -- node[edgeLabel] {$54$} (V35);
\draw[edge] (V37) -- node[edgeLabel] {$55$} (V42);
\draw[edge] (V37) -- node[edgeLabel] {$56$} (V41);
\draw[edge] (V1) -- node[edgeLabel] {$57$} (V2);
\draw[edge] (V7) -- node[edgeLabel] {$58$} (V8);
\draw[edge] (V13) -- node[edgeLabel] {$59$} (V14);
\draw[edge] (V19) -- node[edgeLabel] {$60$} (V20);
\draw[edge] (V25) -- node[edgeLabel] {$61$} (V26);
\draw[edge] (V31) -- node[edgeLabel] {$62$} (V32);
\draw[edge] (V37) -- node[edgeLabel] {$63$} (V38);
\vertexLabelR{V1}{left}{$ $}
\vertexLabelR{V2}{left}{$ $}
\vertexLabelR{V3}{left}{$ $}
\vertexLabelR{V4}{left}{$ $}
\vertexLabelR{V5}{left}{$ $}
\vertexLabelR{V6}{left}{$ $}
\vertexLabelR{V7}{left}{$ $}
\vertexLabelR{V8}{left}{$ $}
\vertexLabelR{V9}{left}{$ $}
\vertexLabelR{V10}{left}{$ $}
\vertexLabelR{V11}{left}{$ $}
\vertexLabelR{V12}{left}{$ $}
\vertexLabelR{V13}{left}{$ $}
\vertexLabelR{V14}{left}{$ $}
\vertexLabelR{V15}{left}{$ $}
\vertexLabelR{V16}{left}{$ $}
\vertexLabelR{V17}{left}{$ $}
\vertexLabelR{V18}{left}{$ $}
\vertexLabelR{V19}{left}{$ $}
\vertexLabelR{V20}{left}{$ $}
\vertexLabelR{V21}{left}{$ $}
\vertexLabelR{V22}{left}{$ $}
\vertexLabelR{V23}{left}{$ $}
\vertexLabelR{V24}{left}{$ $}
\vertexLabelR{V25}{left}{$ $}
\vertexLabelR{V26}{left}{$ $}
\vertexLabelR{V27}{left}{$ $}
\vertexLabelR{V28}{left}{$ $}
\vertexLabelR{V29}{left}{$ $}
\vertexLabelR{V30}{left}{$ $}
\vertexLabelR{V31}{left}{$ $}
\vertexLabelR{V32}{left}{$ $}
\vertexLabelR{V33}{left}{$ $}
\vertexLabelR{V34}{left}{$ $}
\vertexLabelR{V35}{left}{$ $}
\vertexLabelR{V36}{left}{$ $}
\vertexLabelR{V37}{left}{$ $}
\vertexLabelR{V38}{left}{$ $}
\vertexLabelR{V39}{left}{$ $}
\vertexLabelR{V40}{left}{$ $}
\vertexLabelR{V41}{left}{$ $}
\vertexLabelR{V42}{left}{$ $}

\node at (0.704405825649691-0.1 , 0.3392239669730524) {$a_1$};
\node at (0.7860866519424037-0.1 , 0.1696119834865262) {$b_1$};
\node at (1.382853759595796 , 0.6659472721439031+0.1) {$c_1$};
\node at (1.722077726568848 +0.1 , 0.8293089247293285 +0.1) {$d_1$};
\node at (0.6227249993569783-0.1 , 0.5088359504595785) {$e_1$};
\node at (1.043629792622743 , 0.5025856195584777-0.1){$f_1$};
\node at (0.1739738716752359 , 0.7622293348805765-0.1) {$a_2$};
\node at (0.3575090223697507 , 0.7203386344133406-0.1) {$b_2$};
\node at (0.3415366735441793+0.1 , 1.496369937658636) {$c_2$};
\node at (0.4253180744786508 , 1.863440239047665+0.1) {$d_2$};
\node at (-0.009561279019278857 , 0.8041200353478124-0.1) {$e_2$};
\node at (0.2577552726097076 +0.1, 1.129299636269606) {$f_2$};
\node at (-0.4874639560909119 , 0.6112604669781575-0.1) {$a_3$};
\node at (-0.3402801929023528 , 0.7286356013966028-0.1) {$b_3$};
\node at (-0.9569644937646932+0.1 , 1.199995519732393) {$c_3$};
\node at (-1.191714762601584 , 1.494363046109512+0.1) {$d_3$};
\node at (-0.6346477192794708 , 0.4938853325597122-0.1) {$e_3$};
\node at (-0.7222142249278024+0.1 , 0.9056279933552753) {$f_3$};
\node at (-0.78183148246803 +0.1, 1.776356839400251e-16) {$a_4$};
\node at (-0.7818314824680299+0.1 , 0.1882550990706334) {$b_4$};
\node at (-1.534851878750563 , 3.552713678800501e-16+0.1) {$c_4$};
\node at (-1.91136207689183-0.1 , 4.440892098500626e-16) {$d_4$};
\node at (-0.78183148246803 +0.1, -0.188255099070633) {$e_4$};
\node at (-1.158341680609296 , 2.664535259100376e-16+0.1) {$f_4$};
\node at (-0.4874639560909121 , -0.6112604669781574+0.1) {$a_5$};
\node at (-0.6346477192794712 , -0.4938853325597118+0.1) {$b_5$};
\node at (-0.9569644937646942 -0.1, -1.199995519732393) {$c_5$};
\node at (-1.191714762601585 , -1.494363046109511-0.1) {$d_5$};
\node at (-0.3402801929023533 , -0.7286356013966028+0.1) {$e_5$};
\node at (-0.7222142249278031-0.1 , -0.9056279933552752) {$f_5$};
\node at (0.1739738716752356 , -0.7622293348805766+0.1) {$a_6$};
\node at (-0.009561279019279235 , -0.8041200353478124+0.1) {$b_6$};
\node at (0.3415366735441785 , -1.496369937658636-0.1) {$c_6$};
\node at (0.4253180744786502 , -1.863440239047666-0.1) {$d_6$};
\node at (0.3575090223697505 , -0.720338634413341+0.1) {$e_6$};
\node at (0.2577552726097071-0.1 , -1.129299636269606) {$f_6$};
\node at (0.7044058256496911-0.1 , -0.3392239669730528) {$a_7$};
\node at (0.6227249993569782-0.1 , -0.5088359504595789) {$b_7$};
\node at (1.382853759595796 , -0.665947272143904-0.1) {$c_7$};
\node at (1.722077726568848+0.1 , -0.8293089247293293) {$d_7$};
\node at (0.7860866519424038-0.1 , -0.1696119834865267) {$e_7$};
\node at (1.043629792622743 , -0.5025856195584784-0.1) {$f_7$};
\end{tikzpicture}}
\subcaption{}
\label{graph_c7}
\end{minipage}
\caption{(a) Planar embedding of $\Gamma_{C_4}$ (b) Planar embedding of $\Gamma_{C_7}$}
\end{figure}

Since the group $\langle M_{\alpha}\rangle$ can be embedded into the automorphism group of $\Gamma_{D_n},$ we can exploit the orthogonal matrix $M_{\alpha}$ to also construct a planar embedding of this graph, as seen in the cyclic case.

\begin{remark}
    Let $n\geq 4$ be a natural number and $v,w_1,w_2$ be defined by
 $ v\coloneqq(1,0)^t+,w_1\coloneqq(\cos(\alpha)-1,\sin(\alpha))^t,w_2\coloneqq(\sin(\alpha),1-\cos(\alpha))^t.$ Furthermore, let $\phi: V'\to \mathbb{R}^2$ be a function defined by
    \begin{align*}
  \phi(a_1)=v+\frac{3}{4}w_1, \phi(b_1)=v+\frac{1}{4}w_1,\phi(c_1)=v+\frac{1}{2}w_1+w_2,\phi(d_1)=v+\frac{1}{2}w_1+2w_2
    \end{align*}
    and 
    \begin{align*}
        & \phi (a_{i+1})={(M_\alpha)}^{i}\phi(a_1),
        \phi (b_{i+1})={(M_\alpha)}^{i}\phi(b_1),
        \phi (c_{i+1})={(M_\alpha)}^{i}\phi(c_1),
       \phi (d_{i+1})={(M_\alpha)}^{i}\phi(d_1)
    \end{align*}
    where $\alpha=\frac{2\pi}{n}$ and $i \in \{2,\ldots,n\}.$
Then $\phi$ gives rise to a planar straight line embedding of $G.$
\end{remark}

Figure \ref{graph_d4} shows the constructed planar embedding for $n=4$ and Figure \ref{graph_d7} shows the constructed embedding for $n=7.$

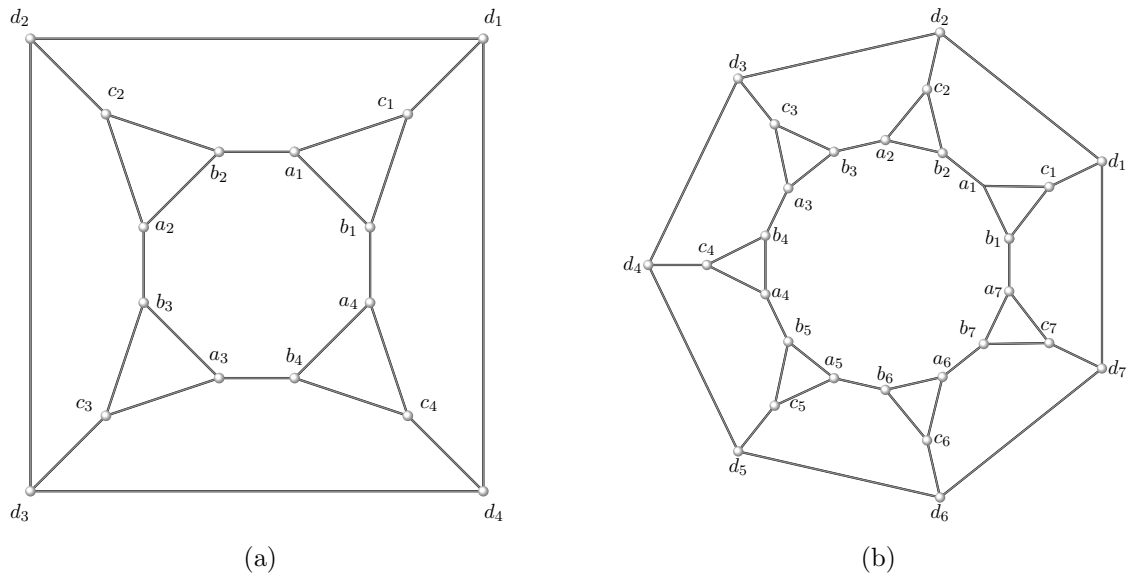
\begin{figure}[H]
\begin{minipage}{.49\textwidth}
    \centering
    \resizebox{!}{7cm}{\begin{tikzpicture}[vertexBall, edgeDouble=nolabels, faceStyle=nolabels, scale=2]

\coordinate (V1) at (0.353553390593274 , 1.060660171779821);
\coordinate (V2) at (1.060660171779821 , 0.3535533905932737);
\coordinate (V3) at (1.414213562373095 , 1.414213562373095);
\coordinate (V4) at (2.121320343559642 , 2.121320343559642);
\coordinate (V5) at (-1.060660171779821 , 0.353553390593274);
\coordinate (V6) at (-0.3535533905932737 , 1.060660171779821);
\coordinate (V7) at (-1.414213562373095 , 1.414213562373095);
\coordinate (V8) at (-2.121320343559642 , 2.121320343559642);
\coordinate (V9) at (-0.3535533905932741 , -1.060660171779821);
\coordinate (V10) at (-1.060660171779821 , -0.3535533905932736);
\coordinate (V11) at (-1.414213562373095 , -1.414213562373095);
\coordinate (V12) at (-2.121320343559643 , -2.121320343559642);
\coordinate (V13) at (1.060660171779821 , -0.3535533905932741);
\coordinate (V14) at (0.3535533905932736 , -1.060660171779821);
\coordinate (V15) at (1.414213562373095 , -1.414213562373095);
\coordinate (V16) at (2.121320343559642 , -2.121320343559643);
\draw[edge] (V3) -- node[edgeLabel] {$1$} (V4);
\draw[edge] (V15) -- node[edgeLabel] {$2$} (V16);
\draw[edge] (V2) -- node[edgeLabel] {$3$} (V3);
\draw[edge] (V13) -- node[edgeLabel] {$4$} (V15);
\draw[edge] (V4) -- node[edgeLabel] {$5$} (V16);
\draw[edge] (V2) -- node[edgeLabel] {$6$} (V13);
\draw[edge] (V7) -- node[edgeLabel] {$7$} (V8);
\draw[edge] (V1) -- node[edgeLabel] {$8$} (V3);
\draw[edge] (V6) -- node[edgeLabel] {$9$} (V7);
\draw[edge] (V4) -- node[edgeLabel] {$10$} (V8);
\draw[edge] (V1) -- node[edgeLabel] {$11$} (V6);
\draw[edge] (V11) -- node[edgeLabel] {$12$} (V12);
\draw[edge] (V5) -- node[edgeLabel] {$13$} (V7);
\draw[edge] (V10) -- node[edgeLabel] {$14$} (V11);
\draw[edge] (V8) -- node[edgeLabel] {$15$} (V12);
\draw[edge] (V5) -- node[edgeLabel] {$16$} (V10);
\draw[edge] (V9) -- node[edgeLabel] {$17$} (V11);
\draw[edge] (V14) -- node[edgeLabel] {$18$} (V15);
\draw[edge] (V12) -- node[edgeLabel] {$19$} (V16);
\draw[edge] (V9) -- node[edgeLabel] {$20$} (V14);
\draw[edge] (V1) -- node[edgeLabel] {$21$} (V2);
\draw[edge] (V5) -- node[edgeLabel] {$22$} (V6);
\draw[edge] (V9) -- node[edgeLabel] {$23$} (V10);
\draw[edge] (V13) -- node[edgeLabel] {$24$} (V14);
\vertexLabelR{V1}{left}{$ $}
\vertexLabelR{V2}{left}{$ $}
\vertexLabelR{V3}{left}{$ $}
\vertexLabelR{V4}{left}{$ $}
\vertexLabelR{V5}{left}{$ $}
\vertexLabelR{V6}{left}{$ $}
\vertexLabelR{V7}{left}{$ $}
\vertexLabelR{V8}{left}{$ $}
\vertexLabelR{V9}{left}{$ $}
\vertexLabelR{V10}{left}{$ $}
\vertexLabelR{V11}{left}{$ $}
\vertexLabelR{V12}{left}{$ $}
\vertexLabelR{V13}{left}{$ $}
\vertexLabelR{V14}{left}{$ $}
\vertexLabelR{V15}{left}{$ $}
\vertexLabelR{V16}{left}{$ $}

\node at (0.353553390593274 , 1.060660171779821-0.2) {$a_1$};
\node at (1.060660171779821-0.2 , 0.3535533905932737) {$b_1$};
\node at (1.414213562373095-0.2 , 1.414213562373095+0.1) {$c_1$};
\node at (2.121320343559642 +0.1, 2.12132034355964+0.2) {$d_1$};
\node at (-1.060660171779821+0.2 , 0.353553390593274) {$a_2$};
\node at (-0.3535533905932737 , 1.060660171779821-0.2) {$b_2$};
\node at (-1.414213562373095+0.1 , 1.414213562373095+0.2) {$c_2$};
\node at (-2.121320343559642-0.1 , 2.121320343559642+0.2){$d_2$};
\node at (-0.3535533905932741 , -1.060660171779821+0.2) {$a_3$};
\node at (-1.060660171779821 +0.2, -0.3535533905932736) {$b_3$};
\node at (-1.414213562373095-0.2 , -1.414213562373095+0.1) {$c_3$};
\node at (-2.121320343559643-0.1 , -2.121320343559642-0.2) {$d_3$};
\node at (1.060660171779821-0.2 , -0.3535533905932741) {$a_4$};
\node at (0.3535533905932736 , -1.060660171779821+0.2) {$b_4$};
\node at (1.414213562373095+0.2 , -1.414213562373095+0.1) {$c_4$};
\node at (2.121320343559642+0.1 , -2.121320343559643-0.2) {$d_4$};
\end{tikzpicture}}
\subcaption{}
\label{graph_d4}
\end{minipage}
\begin{minipage}{.49\textwidth}
    \centering
    \resizebox{!}{7cm}{\begin{tikzpicture}[vertexBall, edgeDouble=nolabels, faceStyle=nolabels, scale=3]

\coordinate (V1) at (0.6227249993569783 , 0.5088359504595785);
\coordinate (V2) at (0.7860866519424037 , 0.1696119834865262);
\coordinate (V3) at (1.043629792622743 , 0.5025856195584777);
\coordinate (V4) at (1.382853759595796 , 0.6659472721439031);
\coordinate (V5) at (-0.009561279019278857 , 0.8041200353478124);
\coordinate (V6) at (0.3575090223697507 , 0.7203386344133406);
\coordinate (V7) at (0.2577552726097076 , 1.129299636269606);
\coordinate (V8) at (0.3415366735441793 , 1.496369937658636);
\coordinate (V9) at (-0.6346477192794708 , 0.4938853325597122);
\coordinate (V10) at (-0.3402801929023528 , 0.7286356013966028);
\coordinate (V11) at (-0.7222142249278024 , 0.9056279933552753);
\coordinate (V12) at (-0.9569644937646932 , 1.199995519732393);
\coordinate (V13) at (-0.78183148246803 , -0.188255099070633);
\coordinate (V14) at (-0.7818314824680299 , 0.1882550990706334);
\coordinate (V15) at (-1.158341680609296 , 2.664535259100376e-16);
\coordinate (V16) at (-1.534851878750563 , 3.552713678800501e-16);
\coordinate (V17) at (-0.3402801929023533 , -0.7286356013966028);
\coordinate (V18) at (-0.6346477192794712 , -0.4938853325597118);
\coordinate (V19) at (-0.7222142249278031 , -0.9056279933552752);
\coordinate (V20) at (-0.9569644937646942 , -1.199995519732393);
\coordinate (V21) at (0.3575090223697505 , -0.720338634413341);
\coordinate (V22) at (-0.009561279019279235 , -0.8041200353478124);
\coordinate (V23) at (0.2577552726097071 , -1.129299636269606);
\coordinate (V24) at (0.3415366735441785 , -1.496369937658636);
\coordinate (V25) at (0.7860866519424038 , -0.1696119834865267);
\coordinate (V26) at (0.6227249993569782 , -0.5088359504595789);
\coordinate (V27) at (1.043629792622743 , -0.5025856195584784);
\coordinate (V28) at (1.382853759595796 , -0.665947272143904);
\draw[edge] (V3) -- node[edgeLabel] {$1$} (V4);
\draw[edge] (V27) -- node[edgeLabel] {$2$} (V28);
\draw[edge] (V2) -- node[edgeLabel] {$3$} (V3);
\draw[edge] (V25) -- node[edgeLabel] {$4$} (V27);
\draw[edge] (V4) -- node[edgeLabel] {$5$} (V28);
\draw[edge] (V2) -- node[edgeLabel] {$6$} (V25);
\draw[edge] (V7) -- node[edgeLabel] {$7$} (V8);
\draw[edge] (V1) -- node[edgeLabel] {$8$} (V3);
\draw[edge] (V6) -- node[edgeLabel] {$9$} (V7);
\draw[edge] (V4) -- node[edgeLabel] {$10$} (V8);
\draw[edge] (V1) -- node[edgeLabel] {$11$} (V6);
\draw[edge] (V11) -- node[edgeLabel] {$12$} (V12);
\draw[edge] (V5) -- node[edgeLabel] {$13$} (V7);
\draw[edge] (V10) -- node[edgeLabel] {$14$} (V11);
\draw[edge] (V8) -- node[edgeLabel] {$15$} (V12);
\draw[edge] (V5) -- node[edgeLabel] {$16$} (V10);
\draw[edge] (V15) -- node[edgeLabel] {$17$} (V16);
\draw[edge] (V9) -- node[edgeLabel] {$18$} (V11);
\draw[edge] (V14) -- node[edgeLabel] {$19$} (V15);
\draw[edge] (V12) -- node[edgeLabel] {$20$} (V16);
\draw[edge] (V9) -- node[edgeLabel] {$21$} (V14);
\draw[edge] (V19) -- node[edgeLabel] {$22$} (V20);
\draw[edge] (V13) -- node[edgeLabel] {$23$} (V15);
\draw[edge] (V18) -- node[edgeLabel] {$24$} (V19);
\draw[edge] (V16) -- node[edgeLabel] {$25$} (V20);
\draw[edge] (V13) -- node[edgeLabel] {$26$} (V18);
\draw[edge] (V23) -- node[edgeLabel] {$27$} (V24);
\draw[edge] (V17) -- node[edgeLabel] {$28$} (V19);
\draw[edge] (V22) -- node[edgeLabel] {$29$} (V23);
\draw[edge] (V20) -- node[edgeLabel] {$30$} (V24);
\draw[edge] (V17) -- node[edgeLabel] {$31$} (V22);
\draw[edge] (V21) -- node[edgeLabel] {$32$} (V23);
\draw[edge] (V26) -- node[edgeLabel] {$33$} (V27);
\draw[edge] (V24) -- node[edgeLabel] {$34$} (V28);
\draw[edge] (V21) -- node[edgeLabel] {$35$} (V26);
\draw[edge] (V1) -- node[edgeLabel] {$36$} (V2);
\draw[edge] (V5) -- node[edgeLabel] {$37$} (V6);
\draw[edge] (V9) -- node[edgeLabel] {$38$} (V10);
\draw[edge] (V13) -- node[edgeLabel] {$39$} (V14);
\draw[edge] (V17) -- node[edgeLabel] {$40$} (V18);
\draw[edge] (V21) -- node[edgeLabel] {$41$} (V22);
\draw[edge] (V25) -- node[edgeLabel] {$42$} (V26);
\vertexLabelR{V2}{left} {$ $}
\vertexLabelR{V2}{left}{$ $}
\vertexLabelR{V3}{left}{$ $}
\vertexLabelR{V4}{left}{$ $}
\vertexLabelR{V5}{left}{$ $}
\vertexLabelR{V6}{left}{$ $}
\vertexLabelR{V7}{left}{$ $}
\vertexLabelR{V8}{left}{$ $}
\vertexLabelR{V9}{left}{$ $}
\vertexLabelR{V10}{left}{$ $}
\vertexLabelR{V11}{left}{$ $}
\vertexLabelR{V12}{left}{$ $}
\vertexLabelR{V13}{left}{$ $}
\vertexLabelR{V14}{left}{$ $}
\vertexLabelR{V15}{left}{$ $}
\vertexLabelR{V16}{left}{$ $}
\vertexLabelR{V17}{left}{$ $}
\vertexLabelR{V18}{left}{$ $}
\vertexLabelR{V19}{left}{$ $}
\vertexLabelR{V20}{left}{$ $}
\vertexLabelR{V21}{left}{$ $}
\vertexLabelR{V22}{left}{$ $}
\vertexLabelR{V23}{left}{$ $}
\vertexLabelR{V24}{left}{$ $}
\vertexLabelR{V25}{left}{$ $}
\vertexLabelR{V26}{left}{$ $}
\vertexLabelR{V27}{left}{$ $}
\vertexLabelR{V28}{left}{$ $}

\node at (0.6227249993569783-0.1 , 0.5088359504595785) {$a_1$};
\node at (0.7860866519424037-0.1 , 0.1696119834865262) {$b_1$};
\node at (1.043629792622743 , 0.5025856195584777+0.1) {$c_1$};
\node at (1.382853759595796+0.1 , 0.6659472721439031) {$d_1$};
\node at (-0.009561279019278857 , 0.8041200353478124-0.1) {$a_2$};
\node at (0.3575090223697507 , 0.7203386344133406-0.1) {$b_2$};
\node at (0.2577552726097076+0.1 , 1.129299636269606) {$c_2$};
\node at (0.3415366735441793 , 1.496369937658636+0.1) {$d_2$};
\node at (-0.6346477192794708+0.1 , 0.4938853325597122-0.1) {$a_3$};
\node at (-0.3402801929023528+0.1 , 0.7286356013966028-0.1) {$b_3$};
\node at (-0.7222142249278024+0.1 , 0.9056279933552753+0.1) {$c_3$};
\node at (-0.9569644937646932 , 1.199995519732393+0.1) {$d_3$};
\node at (-0.78183148246803+0.1 , -0.188255099070633) {$a_4$};
\node at (-0.7818314824680299+0.1 , 0.1882550990706334) {$b_4$};
\node at (-1.158341680609296 , 0.1+2.664535259100376e-16) {$c_4$};
\node at (-1.534851878750563-0.1 , 3.552713678800501e-16) {$d_4$};
\node at (-0.3402801929023533 , -0.7286356013966028+0.1) {$a_5$};
\node at (-0.6346477192794712+0.1 , -0.4938853325597118+0.1) {$b_5$};
\node at (-0.7222142249278031+0.15 , -0.9056279933552752) {$c_5$};
\node at (-0.9569644937646942 , -1.199995519732393-0.1) {$d_5$};
\node at (0.3575090223697505 , -0.720338634413341+0.1) {$a_6$};
\node at (-0.009561279019279235 , -0.8041200353478124+0.1) {$b_6$};
\node at (0.2577552726097071+0.1 , -1.129299636269606) {$c_6$};
\node at (0.3415366735441785 , -1.496369937658636-0.1) {$d_6$};
\node at (0.7860866519424038-0.1 , -0.166119834865267) {$a_7$};
\node at (0.6227249993569782 -0.1, -0.5088359504595789+0.1) {$b_7$};
\node at (1.043629792622743 , -0.5025856195584784+0.1) {$c_7$};
\node  at (1.382853759595796+0.1 , -0.665947272143904) {$d_7$};
\end{tikzpicture}}
\subcaption{}
\label{graph_d7}
\end{minipage}
\caption{(a) Planar embedding of $\Gamma_{D_4}$ (b) Planar embedding of $\Gamma_{D_7}$}
\end{figure}

Note, the embeddings of the graphs in Figure \ref{a_5_graph_cayley} and Figure \ref{a_5_graph_orb} were computed using a linear system of equation that gives rise to the \emph{Tutte-Embedding}. In \cite{tutte_embedding}, Tutte introduces for a planar 3-regular graph $\Gamma$ a crossing-free embedding of $\Gamma$ into the Euclidean plane, such that
\begin{itemize}
    \item the outer face of the graph forms a convex polygon, and 
    \item each vertex is positioned at the barycentre of its neighbouring vertices.
\end{itemize}
Tutte proves that the linear system of equations that arises from enforcing the above properties on the embedding of $\Gamma$ has a unique solution.
As an example, consider the graph $\Gamma=(V,E)$ given by 
\begin{align*}
    V=\{&1,2,3,4,5,6,7,8\}\\
    E=\{& \{1,2\},\{1,4\},\{1,5\},\{2,3\},\{2,6\},\{3,4\},\\
   & \{3,7\},\{4,8\},\{5,6\},\{5,8\},\{6,7\},\{7,8\}\}
\end{align*}
which forms the face graph of an octahedron. Enforcing the face given by the vertices $\{1,2,3,4\}$ to form a square, given by the 2D-coordinates $\left( 0,0\right)^t, \left( 0,1\right)^t, \left( 1,1\right)^t,\left( 1,0\right)^t,$ gives rise to the embedding identified by the following ordered list of 2D-coordinates: 
\begin{align*}
    \left[
     \left( 0,0\right)^t, \left( 0,1\right)^t, \left( 1,1\right)^t, \left( 1,0\right)^t,\left( \frac{1}{3},\frac{1}{3}\right)^t,\left( \frac{1}{3},\frac{2}{3}\right)^t,\left( \frac{2}{3},\frac{2}{3}\right)^t,\left( \frac{1}{3},\frac{2}{3}\right)^t
    \right].
\end{align*}

Here, the coordinate of vertex $i$ in the Tutte Embedding is given by the $i$-th entry in the above list, see Figure \ref{facegraph_octahedron}.
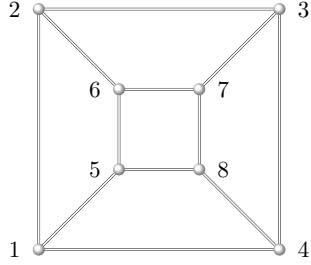
\begin{figure}[H]
\begin{minipage}{\textwidth}
    \centering
    \scalebox{0.8}{\begin{tikzpicture}[vertexBall, edgeDouble=nolabels, faceStyle=nolabels, scale=4]

\coordinate (V1) at (0.,0);
\coordinate (V2) at (0 , 1.);
\coordinate (V3) at (1,1);
\coordinate (V4) at (1,0);
\coordinate (V5) at (1/3,1/3);
\coordinate (V6) at (1/3,2/3);
\coordinate (V7) at (2/3,2/3);
\coordinate (V8) at (2/3,1/3);
\draw[edge=] (V1) -- node[edgeLabel] {$1$} (V2);
\draw[edge=] (V1) -- node[edgeLabel] {$2$} (V5);
\draw[edge=] (V1) -- node[edgeLabel] {$3$} (V4);
\draw[edge=] (V2) -- node[edgeLabel] {$4$} (V3);
\draw[edge=] (V2) -- node[edgeLabel] {$5$} (V6);
\draw[edge=] (V3) -- node[edgeLabel] {$6$} (V4);
\draw[edge=] (V3) -- node[edgeLabel] {$7$} (V7);
\draw[edge=] (V4) -- node[edgeLabel] {$8$} (V8);
\draw[edge=] (V5) -- node[edgeLabel] {$9$} (V6);
\draw[edge=] (V5) -- node[edgeLabel] {$10$} (V8);
\draw[edge=] (V6) -- node[edgeLabel] {$11$} (V7);
\draw[edge=] (V7) -- node[edgeLabel] {$12$} (V8);

\vertexLabelR[]{V1}{left}{$ $}
\vertexLabelR[]{V2}{left}{$ $}
\vertexLabelR[]{V3}{left}{$ $}
\vertexLabelR[]{V4}{left}{$ $}
\vertexLabelR[]{V5}{left}{$ $}
\vertexLabelR[]{V6}{left}{$ $}
\vertexLabelR[]{V7}{left}{$ $}
\vertexLabelR[]{V8}{left}{$ $}

\node at (0.-0.1,0) {$1$};
\node at (0-0.1 , 1.) {$2$};
\node at (1+0.1,1) {$3$};
\node at (1+0.1,0) {$4$};
\node at (1/3-0.1,1/3) {$5$};
\node at (1/3-0.1,2/3) {$6$};
\node at (2/3+0.1,2/3) {$7$};
\node at (2/3+0.1,1/3) {$8$};
\end{tikzpicture}}
\end{minipage}
\caption{Planar embedding of the face graph of the octahedron}
\label{facegraph_octahedron}
\end{figure}
For non planar graphs, see for instance Figure \ref{a_5_graph_cayley}, the existence of an unique embedding with all the vertices positioned at the barycentre of their neighbouring vertices is not established. In the figures showing non-planar graphs, the system of linear equations is still solvable and yields a highly symmetric embedding of the given graphs with low crossing numbers.

\section{Cubic Vertex-Transitive Graphs} \label{vertex-transitive}

Frucht's construction yields a cubic graph that has a given group as automorphism group. More precisely, for a given group $G$ generated by two resp.\ $n$ elements we obtain a cubic graph $\Gamma$ on $6|G|$-nodes resp.\ $(2n+6)|G|$ nodes with $\Aut(\Gamma)\cong G$. In general, these graphs are not $G$-vertex-transitive, i.e.\ $G$ does not act transitively on the vertices of $\Gamma.$ Therefore, we  focus on the construction of $G$-vertex-transitive graphs in this section. In the mathematical literature such graphs are also known as (generalised) orbital graphs, see \cite{lauri_scapellato_2016}.

If a group $G$ acts transitively on a given vertex-transitive graph $\Gamma$ it can be shown that $\Gamma$ is isomorphic to a Schreier coset graph, such that the nodes of $\Gamma$ correspond to the right-cosets $G/G_v$, where $G_v$ is the stabilizer of an arbitrary node in $\Gamma$. Since such a orbital graph has exactly $|G|/|G_v|=|G/G_v|$ vertices, it follows that this graph has considerably less nodes than the graph based on Frucht's construction. Note that, if $G_v$ is trivial the graph $\Gamma$ is a Cayley graph. In the following two sections, we distinguish between the two cases $G_v\cong \{ 1 \}$ and $G_v \not \cong \{ 1 \}$.
 
For a subgroup $H$ of a given group $G$ we define the generalized orbital graph as follows:

\begin{definition}
    Let $G$ be a finite group and $H\leq G$ be a subgroup of $G$. Let $S=\{g_1,\dots,g_n\}\subset G$. Then the graph with nodes $G/H$ and edges $\bigcup_i \{H,g_iH\}^G$ is called a generalized orbital graph, denoted by $\Gamma_{G/H,S}$. If $H=\{\text{id}\}$ we call it a Cayley graph. A Cayley graph with automorphism group isomorphic to $G$ is called a graphic regular representation (short GRR).
\end{definition}

\subsection{Cubic Cayley Graphs}

Here, we discuss possible constructions based on cubic Cayley graphs. Moreover, we illustrate these constructions for the group $G=A_5$ as an example.
\begin{theorem}[\cite{XIA2022256}]
    Except for a finite number of cases all finite non-abelian simple groups have a cubic GRR.
\end{theorem}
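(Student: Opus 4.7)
My plan is to invoke the classification of finite simple groups (CFSG) and proceed by case analysis across the four families: alternating, classical of Lie type, exceptional of Lie type, and sporadic. The first step is a reformulation: a connected cubic Cayley graph $\mathrm{Cay}(G,S)$ is a GRR precisely when the right regular representation of $G$ coincides with the full automorphism group of the graph. By a classical criterion of Godsil, this is equivalent to asking that (a) the setwise stabiliser $\Aut(G,S):=\{\sigma\in\Aut(G):\sigma(S)=S\}$ is trivial, and (b) the stabiliser of the identity vertex inside $\Aut(\mathrm{Cay}(G,S))$ coincides with $\Aut(G,S)$. So I must choose $S$ so as to kill both outer group symmetries and accidental graph symmetries.

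Since $|S|=3$ and $S=S^{-1}$, either $S$ consists of three involutions or $S=\{a,b,b^{-1}\}$ with $a^{2}=1$ and $\ord(b)\geq 3$. For almost every simple group I would work with the second form, leveraging the well-developed theory of $(2,k)$-generation. For $A_{n}$ with $n\geq 5$, I would take a suitable involution $a$ of controlled cycle type together with an element $b$ of large prime order; for classical groups over $\mathbb{F}_{q}$, a regular semisimple $b$ coming from a Singer-like maximal torus paired with an involution $a$ arising from a Weyl element; for exceptional Lie type groups the analogous Lie-theoretic generators; and for the $26$ sporadic groups a direct verification in GAP. Condition (a) is then straightforward: the outer automorphism group $\mathrm{Out}(G)$ is small and explicitly known for each simple $G$, so the conjugacy classes of $a$ and $b$ can be prescribed to be pointwise-fixed only by the identity of $\mathrm{Out}(G)$, whence any $\sigma\in\Aut(G,S)$ must be inner and then trivial because $b$ and $b^{-1}$ together with $a$ lie in pairwise distinguishable classes.

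The hard part will be condition (b): ruling out graph automorphisms that are not induced by any automorphism of $G$. My plan is to analyse the normaliser of $G_{R}$ inside $\mathrm{Sym}(G)$ using the O'Nan--Scott theorem for primitive and quasiprimitive permutation groups of degree $|G|$, combined with a combinatorial count of short cycles through the identity (triangles correspond to products $s_{1}s_{2}s_{3}=1$ with $s_{i}\in S$, so a judicious choice of $S$ can break any putative symmetry between the three generators). For generic large simple groups this is controlled by the random-generation and spread results of Guralnick-Kantor-Liebeck-Shalev, which guarantee that most pairs $(a,b)$ chosen from the prescribed classes do generate $G$ and induce no exotic symmetry. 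The unavoidable finite exceptions live in the small-rank, small-field regime (groups such as $\mathrm{PSL}_{2}(q)$ for very small $q$, small Suzuki groups $\mathrm{Sz}(q)$, and small Ree groups), where the available conjugacy classes are too few to fulfil the distinguishability requirements; isolating this finite list, and verifying that no exceptions occur outside it, is the substantive content of the theorem and is precisely where the heaviest combination of structural and computational work is needed.
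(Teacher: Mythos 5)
This statement is not proved in the paper at all: it is imported verbatim from the cited reference \cite{XIA2022256} and used as a black box, so there is no internal proof to compare your attempt against. What you have written is a research programme rather than a proof, and for a theorem of this depth the distinction matters. Your reduction via Godsil's criterion and the split of $S$ into ``three involutions'' versus $\{a,b,b^{-1}\}$ is the correct starting point, and condition (a) (triviality of $\Aut(G,S)$) is indeed handled in the literature essentially as you describe, by prescribing conjugacy classes using CFSG and knowledge of $\mathrm{Out}(G)$. But you explicitly defer condition (b) --- ruling out graph automorphisms not induced by group automorphisms --- and that deferral is the entire theorem: you neither identify the finite exceptional list nor give an argument that works outside it.

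Concretely, two gaps. First, your proposed tool for condition (b), an O'Nan--Scott analysis of the normaliser of the regular subgroup $G_{R}$ in $\mathrm{Sym}(G)$, does not obviously apply: $\Aut(\mathrm{Cay}(G,S))$ need not act primitively (or even quasiprimitively) on the vertex set, so O'Nan--Scott gives you no purchase, and counting triangles through the identity only constrains the local action, not the global overgroup. The machinery that actually makes the cubic case tractable, and which is absent from your plan, is the bound on vertex-stabilisers of connected cubic arc-transitive graphs (Tutte, Djokovi\'c--Miller: the stabiliser has order dividing $48$), which forces $|\Aut(\Gamma)|\leq 48|G|$ in the arc-transitive case and, combined with the simplicity of $G$, shows that $G$ is normal in $\Aut(\Gamma)$ outside a controllable list; once $G$ is normal, $\Aut(\Gamma)=G\rtimes\Aut(G,S)$ and one is back to condition (a). Second, ``most pairs generate and induce no exotic symmetry'' via Guralnick--Kantor--Liebeck--Shalev is an assertion about generation, not about automorphisms of the resulting graph; probabilistic generation does not by itself exclude extra graph symmetries. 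As it stands the proposal names plausible ingredients but proves nothing; if you want to engage with this statement, the honest options are to cite \cite{XIA2022256} as the paper does, or to carry out the stabiliser-bound argument in full for at least one family.
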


Combined with the following conjectures we can find cycle double covers of cubic GRRs by 3-edge colourings.

\begin{conjecture}[\cite{Hujdurović2014}]\label{conj_cayley_3edge}
    Every cubic Cayley graph admits a 3-edge colouring.
\end{conjecture}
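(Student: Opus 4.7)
The plan is to use the structure of cubic Cayley graphs to reduce the statement to a manageable case analysis. A cubic Cayley graph $\mathcal{C}_{G,S}$ has a symmetric connection set with $|S|=3$, and since every non-involution contributes two generators, $S$ must be of one of two forms: either $S=\{a,b,c\}$ with $a^{2}=b^{2}=c^{2}=1$, or $S=\{a,b,b^{-1}\}$ with $a^{2}=1$ and $\ord(b)>2$. In the first case, colouring every edge by the generator that produced it yields a proper 3-edge colouring immediately, since at each node the three incident edges come from three distinct involutions.

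For the second case, the $a$-edges form a perfect matching $M_{a}$, while the $\{b,b^{-1}\}$-edges decompose into vertex-disjoint cycles of length $\ord(b)$, one for each right coset of $\langle b\rangle$ in $G$. If $\ord(b)$ is even, these $b$-cycles admit a proper 2-edge colouring with colours $\{2,3\}$, and colouring $M_{a}$ with colour $1$ completes the construction. The remaining subcase, where $\ord(b)$ is odd, is where the real difficulty lies.

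In this final subcase the $b$-cycles are all odd and cannot be 2-coloured on their own; one must trade some $a$-edges into the 2-factor to break parity. Equivalently, by Vizing's theorem one seeks a perfect matching $M\neq M_{a}$ whose complement is a 2-factor of even cycles. Note that the presence of the involution $a$ already forces $|G|$ to be even, so any Hamilton cycle in $\mathcal{C}_{G,S}$ would have even length and split into two colour classes which, together with the leftover matching, yield a 3-edge colouring; thus Conjecture \ref{conj_cayley_3edge} would follow in this regime from Lov\'asz's Hamiltonicity conjecture for vertex-transitive graphs, or from the weaker statement that such graphs admit an even 2-factor.

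My concrete plan for the odd-order subcase is to exploit the coset structure: the $b$-cycles correspond to right cosets of $\langle b\rangle$, and right multiplication by $a$ gives an involutive pairing of cosets together with a within-coset bijection. Contracting each $b$-cycle to a super-node yields a multigraph whose edges are the $a$-edges, and the task becomes to choose, at each super-node, a consistent rotation of the attached $a$-edges so that local odd cycles can be \emph{spliced} in pairs into larger even cycles; I would try to formalise this by induction on the number of cosets, using the subgroup $\langle a,b\rangle\leq G$ to control the splicing pattern. The main obstacle, and the reason the conjecture remains open in general, is that the splicing must be made globally consistent: exceptional Schreier configurations of $\langle a,b\rangle$ acting on $G/\langle b\rangle$ may well obstruct any uniform choice, and handling these is where a genuinely new idea would be required.
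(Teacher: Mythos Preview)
The statement you are trying to prove is presented in the paper as a \emph{conjecture}, not a theorem: the paper does not prove it and explicitly attributes it as an open problem from \cite{Hujdurović2014}. What the paper does do, in the remark immediately following the conjecture, is exactly the easy part of your reduction: if $S=\{s_1,s_2,s_3\}$ consists of three involutions, colour by generator; if $S=\{s,x,x^{-1}\}$ with $|x|$ even, colour the $s$-edges with one colour and the even $x$-cycles alternately with the other two. So your first two cases agree with the paper's treatment and are correct.

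The genuine gap is the one you yourself flag: when $S=\{a,b,b^{-1}\}$ with $a^2=1$ and $\ord(b)$ odd, you do not have a proof. You correctly observe that a Hamilton cycle (or any even $2$-factor) would suffice, and that this would follow from the Lov\'asz conjecture, but that is trading one open problem for another. Your splicing/induction sketch on cosets of $\langle b\rangle$ is a reasonable heuristic, but as you note, global consistency of the splicing is exactly the obstruction, and nothing in the proposal overcomes it. In short: your write-up is a correct reduction to the known hard case together with an honest admission that the hard case is unresolved; it is not a proof of the conjecture, and neither is anything in the paper.
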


Below, we see that Conjecture \ref{conj_cayley_3edge} can be easily verified in many cases. Alternatively, we can obtain cycle double covers of a Cayley graph as shown below.

\begin{remark}
    \begin{enumerate}
    \item Let $S=\{s_1,s_2,s_3\}\subset G$ be a set of three distinct involutions such that $G=\langle S \rangle$. By colouring the edges accordingly to the corresponding involutions, we see that the Cayley graph given by $G$ and $S$ is a cubic 3-edge colourable graph.
    \item Let $S=\{s,x,x^{-1}\}\subset G$ be a set of an involution $s$ and an element $x$ with even order such that $G=\langle S \rangle$. Then the Cayley graph given by $G$ and $S$ is 3-edge colourable by colouring the edges corresponding to the involution $s$ in one colour and the edges obtained by the even cycles of $x$ in alternating colours. More general, we can define a cycle double cover, via the cycles generated by $x$ and $sx$:
    for $g\in G$ we have that $g$ lies on the cycle $(g,g\cdot x,g\cdot x^2,\dots,g\cdot x^{-1})$, where $x^{-1}=x^{|x|-1}$ and on the cycle $(g,g\cdot s,g\cdot s\cdot x,\dots,g\cdot sx^{-1},g\cdot sx^{-1}\cdot s)$. Note that the union of these cycles for all elements $g\in G$ yields a cycle double cover invariant under the action of $G$.
    \end{enumerate}
\end{remark}

Let $G=A_5$ and $S=\{(1,2)(4,5),(1,5)(3,4),(1,5)(2,4) \}$. Then we have that $G=\langle S \rangle$ and we can verify that the Cayley graph $\mathcal{C}_{A_5,S}$ is a GRR, shown in Figure \ref{a_5_graph_cayley}. The 3-edge colouring obtained as in the Remark above leads to a simplicial surface with Euler characteristic $1$.

\subsection{Cubic Orbital Graphs}

Next, we focus on the case that $|G_v|>1$.  We construct a cubic vertex-transitive graph $\Gamma$ on $|G|/|H|$ nodes for subgroups $H\leq G$ giving rise to candidates for small cubic graphs with given automorphism group. However, in general we have that $G<\Aut(\Gamma)$. 

For generalized orbital graphs we need to be more careful since for $1\neq H\leq G$ it is possible that for an element $g\in G$, the orbit $\{H,gH \}^G$ contains other elements of the form $\{H,g'H \}$ for $g'\neq g\in G$. A (generalized) orbital graph is defined by the data $H\leq G$ and $S=\{g_1,\dots,g_n \}\subset G$ with vertices $G/H$ and edges given by the orbit of $G$ (with element-wise actions) on the set $\{\{H,g_1H\},\dots,\{H,g_nH\} \}.$ Let $g_1,\dots,g_m$ with $m$ minimal such that $\{\{H,g_1H\},\dots,\{H,g_nH\} \}^G=\{\{H,g_1H\},\dots,\{H,g_mH\} \}$. Then the resulting orbital graph is regular of degree $m$. We can use this construction, to obtain all vertex-transitive graphs $\Gamma$ with $G$ acting transitive on the nodes of $\Gamma$. It is still unknown whether any vertex-transitive graph $\Gamma$ admits a cycle double cover.
\begin{remark}
    Let $\Gamma$ denote a cubic vertex-transitive graphs as above. 
    If the edges of $\Gamma$ are given by three orbitals, we can define a 3-edge colouring by colouring each orbital in a different colour and thus obtain a cycle double cover by alternating colour-cycles. If the edges are given by less than three orbitals, it is no longer guaranteed that we obtain a 3-edge colourable graph.

\end{remark}

As an example, we consider the subgroup $H=\langle (2,3)(4,5)\rangle$ of $A_5=\langle S\rangle,$ with \\$S=\{(1,5)(3,4),(1,4,2,3,5) \}$. We obtain a cubic orbital graph $\Gamma$ on $30$ nodes with automorphism group isomorphic to $A_5$ which admits exactly one 3-edge colouring (up to interchanging colours) and thus it is not a snark, see Figure \ref{a_5_graph_orb}. By considering all vertex-transitive graphs of $A_5$, we find that this graph is the smallest vertex-transitive cubic graph with automorphism group isomorphic to $A_5$. Furthermore a $G$-invariant cycle double cover consisting of $10$ cycles of length $6$ and $6$ cycles of length $5$ which is respected by its automorphism group is given as follows:
{\small \begin{align*}
    (14,27,28,24,19,17), (9,30,25,18,13,10), (8,26,29,20,23,11), (7,15,16,21,22,12), (3,28,27,12,7,4), \\
  (3,13,18,23,20,4),  (2,11,8,17,14,5), (2,25,30,21,22,5),  (1,9,10,24,19,6), (1,16,15,29,26,6), \\
  (3,28,24,10,13),(2,25,18,23,11),(1,16,21,30,9), (6,26,8,17,19), (5,22,12,27,14), (4,20,29,15,7)
\end{align*}}

\begin{figure}[H]
\begin{minipage}{.49\textwidth}
    \centering
    \resizebox{!}{6cm}{\input{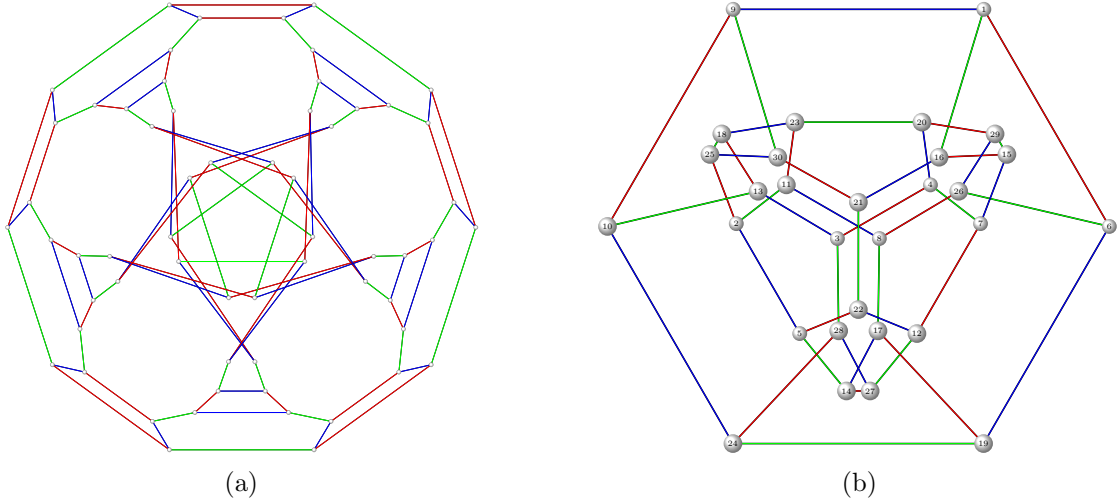}}
    \subcaption{ }
\label{a_5_graph_cayley}
\end{minipage}
\begin{minipage}{.49\textwidth}
    \centering
    \resizebox{!}{6cm}{\begin{tikzpicture}[vertexBall, edgeDouble=nolabels, faceStyle=nolabels, scale=1.3]

\coordinate (V1) at (2.5 , 4.33);
\coordinate (V2) at (-2.433421036746078 , 0.05630827145951441);
\coordinate (V3) at (-0.4179927772481275 , -0.2413211633979189);
\coordinate (V4) at (1.435965956300105 , 0.8290310121039276);
\coordinate (V5) at (-1.169247992592014 , -2.136249191064039);
\coordinate (V6) at (5. , 0.);
\coordinate (V7) at (2.433486057844761 , 0.05630827145951441);
\coordinate (V8) at (0.4177141153966287 , -0.2413211633979189);
\coordinate (V9) at (-2.495 , 4.33);
\coordinate (V10) at (-5. , 0.);
\coordinate (V11) at (-1.435008645662572 , 0.8290310121039276);
\coordinate (V12) at (1.166781192140438 , -2.136249191064039);
\coordinate (V13) at (-1.99855604377276 , 0.7016876904954873);
\coordinate (V14) at (-0.2381884928267052 , -3.28110154111232);
\coordinate (V15) at (2.959506698061767 , 1.435922799439002);
\coordinate (V16) at (1.607406094962054 , 1.380622242727742);
\coordinate (V17) at (0.3904174598743416 , -2.082309933223229);
\coordinate (V18) at (-2.721563894232468 , 1.845178741673319);
\coordinate (V19) at (2.495 , -4.33);
\coordinate (V20) at (1.266714619517894 , 2.079940919604525);
\coordinate (V21) at (0.0002786618514987516 , 0.4826423267958379);
\coordinate (V22) at (-0.0009573106375334037 , -1.658062024207855);
\coordinate (V23) at (-1.264312840165001 , 2.079940919604525);
\coordinate (V24) at (-2.5 , -4.33);
\coordinate (V25) at (-2.957848588593592 , 1.435922799439002);
\coordinate (V26) at (1.99936630669481 , 0.7016876904954873);
\coordinate (V27) at (0.2343996919709173 , -3.28110154111232);
\coordinate (V28) at (-0.3928219748087795 , -2.082309933223229);
\coordinate (V29) at (2.723694585620081 , 1.845178741673319);
\coordinate (V30) at (-1.605811842949666 , 1.380622242727742);
\draw[edge=red] (V1) -- node[edgeLabel] {$1$} (V6);
\draw[edge=blue] (V1) -- node[edgeLabel] {$2$} (V9);
\draw[edge=green] (V1) -- node[edgeLabel] {$3$} (V16);
\draw[edge=blue] (V2) -- node[edgeLabel] {$4$} (V5);
\draw[edge=green] (V2) -- node[edgeLabel] {$5$} (V11);
\draw[edge=red] (V2) -- node[edgeLabel] {$6$} (V25);
\draw[edge=red] (V3) -- node[edgeLabel] {$7$} (V4);
\draw[edge=blue] (V3) -- node[edgeLabel] {$8$} (V13);
\draw[edge=green] (V3) -- node[edgeLabel] {$9$} (V28);
\draw[edge=green] (V4) -- node[edgeLabel] {$10$} (V7);
\draw[edge=blue] (V4) -- node[edgeLabel] {$11$} (V20);
\draw[edge=green] (V5) -- node[edgeLabel] {$12$} (V14);
\draw[edge=red] (V5) -- node[edgeLabel] {$13$} (V22);
\draw[edge=blue] (V6) -- node[edgeLabel] {$14$} (V19);
\draw[edge=green] (V6) -- node[edgeLabel] {$15$} (V26);
\draw[edge=red] (V7) -- node[edgeLabel] {$16$} (V12);
\draw[edge=blue] (V7) -- node[edgeLabel] {$17$} (V15);
\draw[edge=blue] (V8) -- node[edgeLabel] {$18$} (V11);
\draw[edge=green] (V8) -- node[edgeLabel] {$19$} (V17);
\draw[edge=red] (V8) -- node[edgeLabel] {$20$} (V26);
\draw[edge=red] (V9) -- node[edgeLabel] {$21$} (V10);
\draw[edge=green] (V9) -- node[edgeLabel] {$22$} (V30);
\draw[edge=green] (V10) -- node[edgeLabel] {$23$} (V13);
\draw[edge=blue] (V10) -- node[edgeLabel] {$24$} (V24);
\draw[edge=red] (V11) -- node[edgeLabel] {$25$} (V23);
\draw[edge=blue] (V12) -- node[edgeLabel] {$26$} (V22);
\draw[edge=green] (V12) -- node[edgeLabel] {$27$} (V27);
\draw[edge=red] (V13) -- node[edgeLabel] {$28$} (V18);
\draw[edge=blue] (V14) -- node[edgeLabel] {$29$} (V17);
\draw[edge=red] (V14) -- node[edgeLabel] {$30$} (V27);
\draw[edge=red] (V15) -- node[edgeLabel] {$31$} (V16);
\draw[edge=green] (V15) -- node[edgeLabel] {$32$} (V29);
\draw[edge=blue] (V16) -- node[edgeLabel] {$33$} (V21);
\draw[edge=red] (V17) -- node[edgeLabel] {$34$} (V19);
\draw[edge=blue] (V18) -- node[edgeLabel] {$35$} (V23);
\draw[edge=green] (V18) -- node[edgeLabel] {$36$} (V25);
\draw[edge=green] (V19) -- node[edgeLabel] {$37$} (V24);
\draw[edge=green] (V20) -- node[edgeLabel] {$38$} (V23);
\draw[edge=red] (V20) -- node[edgeLabel] {$39$} (V29);
\draw[edge=green] (V21) -- node[edgeLabel] {$40$} (V22);
\draw[edge=red] (V21) -- node[edgeLabel] {$41$} (V30);
\draw[edge=red] (V24) -- node[edgeLabel] {$42$} (V28);
\draw[edge=blue] (V25) -- node[edgeLabel] {$43$} (V30);
\draw[edge=blue] (V26) -- node[edgeLabel] {$44$} (V29);
\draw[edge=blue] (V27) -- node[edgeLabel] {$45$} (V28);
\vertexLabelR[]{V1}{left}{$1$}
\vertexLabelR[]{V2}{left}{$2$}
\vertexLabelR[]{V3}{left}{$3$}
\vertexLabelR[]{V4}{left}{$4$}
\vertexLabelR[]{V5}{left}{$5$}
\vertexLabelR[]{V6}{left}{$6$}
\vertexLabelR[]{V7}{left}{$7$}
\vertexLabelR[]{V8}{left}{$8$}
\vertexLabelR[]{V9}{left}{$9$}
\vertexLabelR[]{V10}{left}{$10$}
\vertexLabelR[]{V11}{left}{$11$}
\vertexLabelR[]{V12}{left}{$12$}
\vertexLabelR[]{V13}{left}{$13$}
\vertexLabelR[]{V14}{left}{$14$}
\vertexLabelR[]{V15}{left}{$15$}
\vertexLabelR[]{V16}{left}{$16$}
\vertexLabelR[]{V17}{left}{$17$}
\vertexLabelR[]{V18}{left}{$18$}
\vertexLabelR[]{V19}{left}{$19$}
\vertexLabelR[]{V20}{left}{$20$}
\vertexLabelR[]{V21}{left}{$21$}
\vertexLabelR[]{V22}{left}{$22$}
\vertexLabelR[]{V23}{left}{$23$}
\vertexLabelR[]{V24}{left}{$24$}
\vertexLabelR[]{V25}{left}{$25$}
\vertexLabelR[]{V26}{left}{$26$}
\vertexLabelR[]{V27}{left}{$27$}
\vertexLabelR[]{V28}{left}{$28$}
\vertexLabelR[]{V29}{left}{$29$}
\vertexLabelR[]{V30}{left}{$30$}
\end{tikzpicture}}
    \subcaption{}
\label{a_5_graph_orb}
\end{minipage}
\caption{Two vertex-transitive graphs with automorphism group $A_5$ (a) 3-edge colourable GRR (b) Orbitalgraph on $30$ nodes}
\end{figure}

\section{Cycle Double Covers and Simplicial Surface Constructions}\label{simplicial_surface_construction}

In this section, we show that we can obtain a simplicial surface with a given group $G$ as automorphism group. In both, Fruchts construction and the construction of cubic generalized orbital graphs, we achieve this by computing a $G$-invariant cycle double cover. Such a cycle double cover can be constructed by exploiting 3-edge colourings or the structure of the group $G$. Here, the number of faces (triangles) of the resulting surface equals the number of nodes of the given cubic graph. Furthermore, we can obtain smaller (in the number of nodes) surfaces by considering vertex-faithful surfaces for certain groups.

\subsection{Frucht surfaces}

We are now ready to prove the following.

\begin{theorem}
    Let $G$ be a finite group generated by a set $S$. There exists a simplicial surface $X_{G,S}$ such that $Aut(X_{G,S})\cong G$.
\end{theorem}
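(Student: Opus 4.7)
The plan is to combine Frucht's cubic graph construction (recalled in Theorem~\ref{Frucht_cubic_theorem}) with the Szekeres recipe for producing a cycle double cover out of a Tait colouring, and then to invoke the criterion (from the lemma comparing $\Aut(X)$ with $\Aut(\mathcal{F}(X))$) to conclude that the automorphism group is exactly $G$ and not strictly larger. Concretely, starting from $G = \langle S \rangle$ with $S = \{g_1,\dots,g_n\}$, I take $\Gamma_{G,S}$ to be the simplified graph defined by $Q_{simp}$ when $n=2$ and the modified graph defined by $Q_{mod}$ when $n\geq 3$ (for $n=1$ a cyclic group is handled by $\Gamma_{C_n}$ from Section~\ref{cyclic_dihedral_graph}). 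In each case we already have $\Aut(\Gamma_{G,S})\cong G$ together with an explicit 3-edge colouring $c_{simp}$ or the analogous colouring inherited from \eqref{frucht_edge_colouring} for $Q_{mod}$.

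Next I apply the Tait-colouring remark from Section~\ref{preliminiaries}: fixing any two colours and alternating between them along each maximal path produces a collection of even closed cycles, and doing this for all three unordered pairs of colours gives a cycle double cover $\mathcal{C}$ of $\Gamma_{G,S}$. By the inverse construction of $\mathcal{F}$ described after Conjecture~\ref{cdc_conjecture}, there is a simplicial surface $X_{G,S}$ with $\mathcal{F}(X_{G,S}) = \Gamma_{G,S}$ whose umbrellas are exactly the cycles of $\mathcal{C}$. By the lemma $\Aut(X_{G,S})\hookrightarrow \Aut(\mathcal{F}(X_{G,S}))\cong G$, so it only remains to show that the embedding is an isomorphism.

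For this I use the criterion from the same lemma: it suffices to check that every $\pi\in\Aut(\Gamma_{G,S})$ sends umbrellas to umbrellas. Because the cycles in $\mathcal{C}$ are precisely the two-coloured cycles of a Tait colouring, it is enough to show that the colouring is $G$-equivariant up to a permutation of colours. This is transparent from the definition of $Q_{simp}$ and $Q_{mod}$: the edges of each colour class are either a union of the Cayley-type orbits $R$ (coloured red) or one of the constant-coloured sums of forms $Q_{i,j}$ attached to the local component at each $g\in G$. The automorphism group of $\Gamma_{G,S}$ is generated, on the one hand, by the left-multiplications $\pi_g$ inherited from $\mathcal{C}_{G,S}$, which clearly preserve each colour class, and, on the other hand, by the symmetries of the local component shown in Figures~\ref{frucht2} and~\ref{frucht3}, which permute colours in the same way at every copy. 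Hence $\pi$ preserves or permutes the colour classes globally, and therefore carries two-coloured cycles to two-coloured cycles, i.e.\ umbrellas to umbrellas.

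The main obstacle is this last equivariance check, because Frucht's proof focuses on graph isomorphism and does not by itself track what an automorphism does to the three colour classes; one has to verify, using the cycle-triplet argument already recalled before Theorem~\ref{Frucht_cubic_theorem}, that a generic automorphism indeed restricts to a colour-preserving (up to global permutation) map on the local components. Once that is in hand, the implication ``colour classes permuted $\Rightarrow$ pairs of colour classes permuted $\Rightarrow$ two-colour cycles permuted'' is formal, so $\pi$ extends uniquely to $X_{G,S}$ and we obtain $\Aut(X_{G,S})\cong\Aut(\Gamma_{G,S})\cong G$, as required.
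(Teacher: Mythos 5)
Your overall strategy coincides with the paper's: take Frucht's cubic graph $\Gamma_{G,S}$ with $\Aut(\Gamma_{G,S})\cong G$, use the explicit Tait colouring to produce a cycle double cover, let the cycles be the umbrellas of a surface $X_{G,S}$ with $\mathcal{F}(X_{G,S})=\Gamma_{G,S}$, and then upgrade the embedding $\Aut(X_{G,S})\hookrightarrow\Aut(\Gamma_{G,S})\cong G$ to an isomorphism by showing the cycle double cover is invariant under every graph automorphism. (Your explicit treatment of the rank-one case via $\Gamma_{C_n}$ is a small point the paper glosses over.) The one place where your argument wobbles is the final equivariance step. You describe $\Aut(\Gamma_{G,S})$ as generated by the left-multiplications \emph{together with} ``symmetries of the local component,'' and you flag as the ``main obstacle'' a cycle-triplet verification that a generic automorphism permutes the three colour classes. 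Neither is needed, and the first claim is not correct as stated: the content of Theorem~\ref{Frucht_cubic_theorem} is precisely that $\Aut(\Gamma_{G,S})\cong G$, and since the left-translations $\sigma_m(x_{i,g_k})=x_{i,g_m g_k}$ already form a subgroup isomorphic to $G$, they exhaust the automorphism group --- the local components are rigid by design, so there are no further automorphisms to control. Each $\sigma_m$ fixes every $Q_{i,j}$ and fixes $R_{simp}$ resp.\ $R_{mod}$, hence fixes each colour class outright (not merely up to a permutation of colours), so the two-colour cycles, i.e.\ the umbrellas, are permuted and every graph automorphism lifts to $X_{G,S}$. This is exactly how the paper closes the argument; once you replace your ``obstacle'' paragraph with this one-line observation, your proof is the paper's proof.
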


\begin{proof} 
From Theorem \ref{Frucht_cubic_theorem}, we obtain a cubic graph $\Gamma=\Gamma_{G,S}$ with $\Aut(\Gamma)\cong G$. Moreover, $\Gamma$ has a 3-edge colouring as shown in \eqref{frucht_edge_colouring}. This yields a cycle double cover by considering all cycles defined by alternating between two colours, see \cite{szekeres}.  We obtain a simplicial surface $X_{G,S}$, where faces and edges are given by the graph and vertices by the cycles such that $\mathcal{F}(X_{G,S})=\Gamma$ and $\Aut(X_{G,E})\leq \Aut(\Gamma)$. In order to show equality it suffices to show that the given cycle double cover is $G$-invariant. The automorphism group of the graph $\Aut(\Gamma)$ is isomorphic to $G$ via the automorphisms of type $$\sigma_m (x_{i,g_k})=x_{i,g_m g_k },$$ for $m=1,\dots,h=|G|.$ We have that $\sigma_m(Q_{i,j})=Q_{i,j}$ and for $|S|=2$ resp.\ $|S|>2$ we have $\sigma_m(R_{simp})=R_{simp}$ resp.\ $\sigma_m(R_{mod})=R_{mod}$. Hence, the cycle double cover is invariant under $\sigma_m$ and thus vertices of $X_{G,S}$ are mapped onto each other. It follows that $$\Aut(X_{G,S}) \cong G .$$
\end{proof}

For the surface construction based on the 3-edge coloring of the graphs given in Figure \ref{frucht2} and Figure \ref{frucht3}, we can give explicitly give the vertex degrees corresponding to the cycle-lengths.

\begin{remark}
Let $G$ be a finite group with $n$ generators. Then we can give the length of the cycles in the cycle double cover above as follows:
For $n=2$ or $n>2$ the blue-green cycles yield $G$ cycles of length  $6$ or $6+2n$, respectively.  For the blue-red cycles we have  $|G|/\langle g_1\cdot g_2\rangle$ cycles of length  $|g_1\cdot g_2|\cdot 6$ for $n=2$ and $|G|/\langle g_1\cdot \dots \cdot g_n\rangle$ cycles of length $|g_1\cdot \dots \cdot g_n|\cdot 6$ for $n>2$.
The green-red cycles yield $|G|/\langle g_i\rangle$ cycles of length $|g_i|\cdot 2$ for $i=2,\dots,n$ and $|G|/\langle g_1\rangle$ cycles of length $|g_1|\cdot 4$.
In total we have the vertex counters
$$ v_{6}^{|G|} v_{|g_1\cdot g_2|\cdot 6}^{|G|/| g_1\cdot g_2|}  v_{|g_1|\cdot 4}^{|G|/| g_1|} v_{|g_2|\cdot 2}^{|G|/| g_2|},$$ for $n=2$ and

$$v_{2n+6}^{|G|} v_{|g_1\cdots g_n|\cdot 6}^{|G|/|g_1\cdots g_n|}v_{|g_1|\cdot 4}^{|G|/| g_1|} \prod_{i=2}^{n} v_{|g_i|\cdot 2}^{|G|/| g_i|},$$ for $n>2$.
\end{remark}

The surface obtained from the 3-edge colouring are not vertex-faithful, i.e.\ do not describe a simplicial complex in the usual sense, since the blue-red cycle shares at least two edges with every green-red cycle. Likewise the blue-green cycle shares all its blue-edges with the blue-red cycle. 

However, we can define an alternative cycle double cover leading to a vertex-faithful surface in many cases

\begin{remark}
Let $G$ be a group with generators $\{g_1,\dots,g_n \}$ such that for all $i=1,\dots,n$ we have that $\langle g_i \rangle \cap \langle g_1 \cdots g_n \rangle = 1$ is the trivial group. Considering Figure \ref{frucht2}, we can define a vertex-faithful surface $X_G$ with given automorphism group $G$. For this we give a cycle double cover of the graphs given in Section \ref{fruchts_graphs} and show that the group of graphs automorphisms which are isomorphic to $G$ leave this cover invariant. Furthermore, the cover has the additional property that no two cycles share two edges. For the rank $2$ case, the cover is given as follows:
\begin{align*}
    \big(x_{1,g},x_{2,g},x_{3,g}\big),
\big(x_{1,g},x_{2,g},x_{6,g},x_{5,g},x_{4,g}\big),
\end{align*}
for all $g\in G$ and
\begin{align*}    \big(x_{6,g},x_{5,g},x_{6,g\cdot g_2},x_{5,g\cdot g_2},\dots,x_{6,g\cdot g_2^{|g_2|-1}},x_{5,g\cdot g_2^{|g_2|-1}}\big),
\end{align*}\ for all orbit representatives $g$ of the right action of $\langle g_2 \rangle$ on $G$,
\begin{align*}    \big(x_{4,g},x_{1,g},x_{3,g},x_{4,g*g_1},x_{1,g\cdot g_1},x_{3,g\cdot g_1},\dots,x_{4,g\cdot g_1^{|g_1|-1}},x_{1,g\cdot g_1^{|g_1|-1}},x_{3,g\cdot g_1^{|g_1|-1}}\big),
\end{align*}
 for all orbit representatives $g$ of the right action of $\langle g_1 \rangle$ on $G$, and the cycles 
 \begin{align*}   \big(x_{6,g},x_{2,g},x_{3,g},x_{4,g\cdot g_1},x_{5,g\cdot g_1},x_{6,g\cdot g_1\cdot g_2},\dots,x_{6,g}\big),
 \end{align*}
  for all orbit representatives $g$ of the right action of $\langle g_1\cdot g_2 \rangle$ on $G$. It is straightforward to show that automorphisms of the underlying graph leave this cycle basis invariant since elements of the form $x_{i,g}$ are mapped onto elements of the form $x_{i,g\cdot h}$ for $h\in G$. We can also describe the vertex-counter of the resulting surface as follows: 
\begin{align*}
    v_3^{|G|} v_5^{|G|} v_{2|g_2|}^{|G|/|g_2|} v_{4|g_1|}^{|G|/|g_1|} v_{5|g_1 g_2^{-1}|}^{|G|/|g_1 g_2|}.
\end{align*}    
\end{remark}

For the rank $n>2$ case we can analogously define a cycle double cover and obtain the following result.
\begin{theorem}
    For every finite group $G=\langle g_1,\dots,g_n\rangle$ there exists a simplicial surface $X$ with $\Aut(X)\cong G$. Furthermore, $X$ is vertex-faithful (a simplicial complex) if for all $i=1,\dots,n$ we have that $\langle g_i \rangle \cap \langle g_1\cdots g_n \rangle = \{1 \}$.
\end{theorem}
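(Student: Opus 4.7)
The existence claim is already handled by the preceding theorem: take $X = X_{G,S}$ for any generating set $S$ of $G$. The substance of the statement lies in the vertex-faithfulness addendum, which I would establish by replacing the 3-edge-colouring cycle double cover (which, as noted in the paragraph preceding the theorem, always has pairs of cycles sharing at least two edges) with an alternative $G$-invariant cycle double cover of the modified Frucht graph $\Gamma_{G,S}$ from Figure \ref{frucht3}, generalising the explicit construction just given for $n=2$.

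Concretely, the plan is to list the cycles in four families: (a) one 3-cycle $(x_{1,g},x_{2,g},x_{3,g})$ together with one longer ``local'' cycle through the remaining nodes $x_{4,g},\ldots,x_{2n+6,g}$ of each component, for every $g\in G$; (b) for each $i=2,\ldots,n$ and each orbit representative $g$ of $\langle g_i\rangle$ acting on $G$ by right multiplication, a generator cycle alternating between a designated pair of internal edges inside each component of the orbit and the red $R_{mod}$-edges indexed by $g_i$; (c) an analogous family for $g_1$ that additionally traverses the top triangle at each visited component, which is what made the corresponding cycle longer already in the $n=2$ case; and (d) one product cycle for each orbit of $\langle g_1\cdots g_n\rangle$ on $G$, threading together the components $g,\,g\cdot g_1,\,g\cdot g_1 g_2,\ldots$ via one internal edge per generator step. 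After writing this down, the verifications that every edge lies on exactly two cycles and every node on exactly three, together with the equivariance of the whole family under the action $x_{i,g}\mapsto x_{i,hg}$, are routine book-keeping of the same sort carried out in the $n=2$ remark; this already yields $\Aut(X)\cong G$ as in the first part.

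The main obstacle is the vertex-faithfulness check, i.e.\ that no two cycles share two edges, and this is the only point where the hypothesis is used. The pairs coming from the local structure in (a) automatically share at most one edge, by the shape of the gadget in Figure \ref{frucht3}. Two generator cycles from (b) built around distinct generators $g_i\neq g_j$ cannot share an $R_{mod}$-edge at all, since the red edges are partitioned by generator, so any overlap would have to occur in their internal portions, which are disjoint by construction. The delicate case is a potential collision between a generator cycle for some $g_i$ and the product cycle from (d): such cycles would share a second edge precisely when a nontrivial power of $g_i$ coincides with a nontrivial power of $g_1\cdots g_n$, so that the two cycles revisit the same Frucht component at two distinct positions of their respective orbits. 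The hypothesis $\langle g_i\rangle \cap \langle g_1\cdots g_n\rangle=\{1\}$ for all $i$ rules out exactly this scenario, and no further source of doubled intersections survives, giving vertex-faithfulness.
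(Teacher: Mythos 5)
Your proposal follows essentially the same route as the paper: the paper establishes this theorem only by exhibiting the explicit $G$-invariant cycle double cover for the rank-$2$ gadget (the $3$-cycle, the local $5$-cycle, the two generator cycles and the product cycle) in the preceding remark and then asserting that the rank-$n$ case is analogous, with the hypothesis $\langle g_i\rangle\cap\langle g_1\cdots g_n\rangle=\{1\}$ entering exactly where you place it, namely to forbid a generator cycle and the product cycle from meeting the same component in two distinct positions. If anything, your sketch spells out more of the $n>2$ bookkeeping than the paper's own one-line reduction to the rank-$2$ remark.
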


In some cases, we can give alternative cycle double covers of Frucht's graph to obtain a vertex-faithful surface.  For example, consider the quaternion group $Q_8=\langle i,j| i^4=j^4=1, i^2=j^2, ij=j^{-1} i\rangle$ with generators $g_1\coloneqq i,g_2\coloneqq j$. In \cite{babai}, it is shown that no graph with automorphism group $Q_8$ can be embedded onto a sphere. Below, we describe a cycle double cover of the graph shown in Figure \ref{Q8_red} yielding a vertex-faithful simplicial surface of Euler characteristic $0$:
\begin{align*}    \big(x_{1,g},x_{2,g},x_{3,g}\big),
\big(x_{1,g},x_{2,g},x_{6,g},x_{5,g},x_{4,g}\big),
\end{align*}
for all $g\in Q_8$ and
\begin{align*}
\big(x_{3,g},x_{1,g}, x_{4,g}, x_{3,g\cdot i^{-1}}, x_{2,g\cdot i^{-1}}, x_{6,g\cdot i^{-1}}, x_{5,g\cdot i^{-1}\cdot j^{-1}}, x_{6,g\cdot i^{-1}\cdot j^{-1}}, x_{5,g\cdot i^{-1}\cdot j^{-1}\cdot j^{-1}},x_{4,g\cdot i^{-1}\cdot j^{-1}\cdot j^{-1}}\big),
\end{align*}
for all $g\in Q_8$ (note that some elements $g\in Q_8$ are yielding the same cycle).

\subsection{Surface with Cyclic or Dihedral Automorphism Group}

In Section \ref{cyclic_dihedral_graph}, we introduced cubic graphs with cyclic or dihedral automorphism group. Here, we present cycle double covers for the two families of cubic graphs giving rise to spherical surfaces. Moreover these construction show that the underlying the constructed cubic graphs are planar.
\begin{theorem}
For $n>2$, there exists a vertex-faithful simplicial sphere $X_{C_n}$ with face graph $\Gamma_{C_n}$ and automorphism group isomorphic to $C_n$. Furthermore, the graph $\Gamma_{C_n}$ is a 3-connected planar cubic graph.
\end{theorem}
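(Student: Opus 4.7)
The plan is to exhibit an explicit $C_n$-invariant cycle double cover of $\Gamma_{C_n}$, show that the associated Szekeres surface $X_{C_n}$ is a vertex-faithful sphere, and then read off the automorphism, connectivity and planarity statements from the general results of Section~\ref{surfaces_and_graphs}.

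Concretely, I would take the $3n+2$ cycles (indices read modulo $n$) consisting of the triangles $T_i=(a_i,b_i,f_i)$, the quadrilaterals $Q_i=(a_i,e_i,c_i,f_i)$ and the heptagons $H_i=(d_i,c_i,e_i,b_{i+1},f_{i+1},c_{i+1},d_{i+1})$ for $i=1,\dots,n$, together with the outer $n$-gon $O=(d_1,\dots,d_n)$ and the long cycle $F=(b_1,a_1,e_1,b_2,a_2,e_2,\dots,b_n,a_n,e_n)$ of length $3n$. The first task is to check via $Q_{C_n}$ that each of these is a simple cycle, and then to verify the double-cover property by walking through the nine edge-types of $\Gamma_{C_n}$: $a_ib_i\in T_i\cap F$, $a_ie_i\in Q_i\cap F$, $a_if_i\in T_i\cap Q_i$, $b_if_i\in T_i\cap H_{i-1}$, $c_id_i\in H_{i-1}\cap H_i$, $c_if_i\in Q_i\cap H_{i-1}$, $c_ie_i\in Q_i\cap H_i$, $b_{i+1}e_i\in F\cap H_i$ and $d_id_{i+1}\in O\cap H_i$. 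The resulting surface $X_{C_n}$ then satisfies $|X_0|=3n+2$, $|X_1|=9n$, $|X_2|=6n$, so $\chi(X_{C_n})=2$ and $X_{C_n}$ is a simplicial sphere.

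The same list of intersections shows that no two umbrella cycles meet in more than one edge: within one gadget only the listed singletons appear, consecutive gadgets interact only through $H_{i-1}\cap H_i=\{c_id_i\}$, and the outliers $O$ and $F$ meet each gadget family in at most one edge; all remaining pairs are disjoint. Hence $X_{C_n}$ is vertex-faithful, and the observation in Section~\ref{surfaces_and_graphs} that a vertex-faithful surface produces a 3-connected face graph immediately gives the 3-connectedness of $\Gamma_{C_n}$. Planarity is inherited from the spherical topology of $X_{C_n}$ and is also witnessed directly by the straight-line embedding $\phi$ constructed in Section~\ref{cyclic_dihedral_graph}.

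For the automorphism group, the Frucht theorem recalled at the start of Section~\ref{cyclic_dihedral_graph} gives $\Aut(\Gamma_{C_n})\cong C_n$, generated by the index shift $\omega\colon x_i\mapsto x_{i+1}$. The proposed cycle double cover is manifestly $\omega$-invariant because each of the families $\{T_i\}$, $\{Q_i\}$, $\{H_i\}$ is permuted cyclically by $\omega$ and the two outlier cycles $O$, $F$ are fixed setwise, so the lemma of Section~\ref{surfaces_and_graphs} lifts every graph automorphism to a surface automorphism and yields $\Aut(X_{C_n})\cong C_n$. The main obstacle I anticipate is the bookkeeping in the edge-multiplicity and pairwise-intersection checks: although routine, the cyclic boundary (identifying $n+1$ with $1$) and the small cases $n=3,4$ are the natural sources of off-by-one errors and must be handled with care in the index arithmetic.
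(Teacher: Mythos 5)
Your proposal is correct and follows essentially the same route as the paper: the cycle double cover you list (triangles $(a_i,b_i,f_i)$, quadrilaterals $(a_i,f_i,c_i,e_i)$, heptagons through $e_i,c_i,d_i,d_{i+1},c_{i+1},f_{i+1},b_{i+1}$, the $d$-cycle and the long $b,a,e$-cycle) is exactly the one used in the paper's proof, followed by the same Euler-characteristic count $(3n+2)-9n+6n=2$ and the same appeal to invariance under the index shift and to vertex-faithfulness/Steinitz for 3-connectedness and planarity. Your explicit edge-by-edge verification of the double-cover property and of the pairwise single-edge intersections is detail the paper leaves implicit, but it is the same argument.
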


\begin{proof}

We fix $n>2$. For $i=1\ldots,n$, we define a cycle double cover of the graph $\Gamma_{C_n}$ as follows:
$$(a_i,b_i,f_i),(a_i,f_i,c_i,e_i),(e_i,c_i,d_i,d_{i+1},c_{i+1},f_{i+1},b_{i+1}),(d_1,\dots,d_n),(b_1,a_1,e_1,b_2,a_2,e_2,\dots,b_n,a_n,e_n),$$ with addition modulo $n$.
The automorphism group of the graph  is leaving the cycle cover invariant and thus the resulting surface has the same automorphism group as their underlying face graph. We obtain a surface $X_{C_n}$ with automorphism group isomorphic to $C_n$.
The corresponding vertex-counter of $X_{C_n}$ is then given by 
$$v_3^nv_4^nv_7^nv_nv_{3n}.$$
It follows that the surface $X_{C_n}$ is spherical, since its Euler characteristic is given by $$(n+n+n+2)- 9n + 6n=2$$
and thus $X_{C_n}$ is a vertex-faithful simplicial sphere. Steinitz Theorem says that a 3-connected cubic graph is planar if and only it is the face graph of a simplicial sphere. It follows that the graphs $\Gamma_{C_n}$ is planar.
\end{proof}

For $n>3$ we give an analogous result for the dihedral group $D_n$.

\begin{theorem}
For $n>3$, there exists a vertex-faithful simplicial sphere $X_{D_n}$ with face graph $\Gamma_{D_n}$ and automorphism group isomorphic to $D_n$. Furthermore, the graph $\Gamma_{D_n}$ is a 3-connected planar cubic graph.
\end{theorem}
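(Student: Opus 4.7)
The plan is to mirror the structure of the proof of the preceding cyclic theorem by exhibiting an explicit $D_n$-invariant cycle double cover of $\Gamma_{D_n}$, verify that it produces a vertex-faithful surface, and then compute the Euler characteristic to see that the result is a sphere. Planarity of $\Gamma_{D_n}$ will follow, exactly as in the cyclic argument, from Steinitz's theorem once we know the surface is a vertex-faithful sphere (since vertex-faithful surfaces yield $3$-connected face graphs, as noted in Section~\ref{surfaces_and_graphs}).

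For $n>3$ fixed, I would take the collection of cycles of $\Gamma_{D_n}$ consisting, with indices read modulo $n$, of
\begin{itemize}
  \item the $n$ triangles $(a_i,b_i,c_i)$,
  \item the $n$ hexagons $(a_i,b_{i+1},c_{i+1},d_{i+1},d_i,c_i)$,
  \item the single $n$-cycle $(d_1,d_2,\dots,d_n)$,
  \item the single $2n$-cycle $(b_1,a_1,b_2,a_2,\dots,b_n,a_n)$.
\end{itemize}
A short inspection of the edge set defined by $Q_{D_n}$ shows that this is a cycle double cover: each triangle edge $a_ib_i$ lies in a triangle and in the $2n$-cycle, each triangle edge $a_ic_i$ lies in a triangle and in the hexagon indexed by $i$, each triangle edge $b_ic_i$ lies in a triangle and in the hexagon indexed by $i-1$, each connector $a_ib_{i+1}$ lies in the hexagon indexed by $i$ and in the $2n$-cycle, each spoke $c_id_i$ lies in the two hexagons indexed by $i-1$ and $i$, and each outer edge $d_id_{i+1}$ lies in the $d$-cycle and in the hexagon indexed by $i$. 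Every node of $\Gamma_{D_n}$ is then visited by exactly three of these cycles, as required.

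Next I would check $D_n$-invariance using the generators $\omega$ and $s$ exhibited in the proof of the preceding theorem on $\Aut(\Gamma_{D_n})$. The rotation $\omega=(a_1,\dots,a_n)(b_1,\dots,b_n)(c_1,\dots,c_n)(d_1,\dots,d_n)$ cyclically permutes triangles among triangles and hexagons among hexagons, and fixes both the $n$-cycle and the $2n$-cycle setwise. The reflection $s$ swaps $a_{1-j}\leftrightarrow b_{1+j}$ and reflects the $c$- and $d$-indices, so the triangle $(a_i,b_i,c_i)$ goes to $(b_{i'},a_{i'},c_{i'})$ for the mirrored index $i'$, i.e.\ to another triangle; the hexagon $(a_i,b_{i+1},c_{i+1},d_{i+1},d_i,c_i)$ is mapped, up to reversal and rotation, onto another hexagon of the same family; the $d$-cycle is mapped to its reverse; and the $2n$-cycle is mapped to itself up to a cyclic shift and orientation reversal. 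Thus the cycle cover is $D_n$-invariant, which upgrades $\Aut(\mathcal{F}(X_{D_n}))\hookrightarrow \Aut(X_{D_n})$ to an isomorphism and yields $\Aut(X_{D_n})\cong D_n$ via the lemma of Section~\ref{surfaces_and_graphs}.

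To see that the resulting surface is a sphere, I would count: the surface has $2n+2$ vertices (one per cycle), $6n$ edges and $4n$ faces, so
\[
\chi(X_{D_n})=(2n+2)+4n-6n=2.
\]
The vertex-counter is $v_3^{\,n}\,v_6^{\,n}\,v_n\,v_{2n}$ (the condition $n>3$ keeps these degrees distinct from $3$, and the degree~$n$ vertex from the degree~$6$ vertex when $n\neq 6$, while still producing a well-defined surface for all $n>3$). Finally, the pairwise checks of the preceding paragraph also show that any two of the cycles in the cover share at most one edge, so $X_{D_n}$ is vertex-faithful. Consequently $\mathcal{F}(X_{D_n})=\Gamma_{D_n}$ is $3$-connected (as observed in Section~\ref{surfaces_and_graphs}), cubic, and is the face graph of a simplicial sphere, so by Steinitz's theorem it is planar. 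The main obstacle I expect in turning this plan into formal prose is the bookkeeping for the action of the involution $s$ on the hexagons and on the $2n$-cycle; the safest way is to fix representative indices ($i=1$ and $i=n$) and verify equality of unoriented cycles via cyclic rotation and reversal, then invoke equivariance under $\omega$ to cover the remaining cases.
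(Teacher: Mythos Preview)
Your proposal is correct and follows essentially the same route as the paper: the same cycle double cover (triangles $(a_i,b_i,c_i)$, the hexagons through $a_i,c_i,d_i,d_{i+1},c_{i+1},b_{i+1}$, the $d$-cycle, and the $2n$-cycle on the $a$'s and $b$'s), the same vertex counter $v_3^{\,n}v_6^{\,n}v_n v_{2n}$, the same Euler-characteristic computation, and the same appeal to Steinitz at the end. Your write-up is in fact more explicit than the paper's, which argues by analogy with the cyclic case; your added checks of edge membership and of $D_n$-invariance under $\omega$ and $s$ are exactly the bookkeeping the paper leaves implicit. One small remark: your claim that ``the pairwise checks of the preceding paragraph also show that any two of the cycles in the cover share at most one edge'' does not literally follow from having shown each edge lies in exactly two cycles---you still need the short case analysis (triangle vs.\ hexagon, hexagon vs.\ hexagon, hexagon vs.\ $d$-cycle, etc.), which is straightforward but worth stating separately.
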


\begin{proof}
    Analogously to the proof above, we construct a cyclic double cover
    $$(a_i,b_i,c_i),(a_i,c_i,d_i,d_{i+1},c_{i+1},f_{i+1},b_{i+1}),(d_1,\dots,d_n),(a_1,b_1,a_2,b_2,\dots,a_n,b_n),$$ for $i=1,\dots,n$ with addition modulo $n$. We obtain a simplicial surface $X_{D_n}$ with vertex counter $$v_3^nv_6^nv_nv_{2n}$$ and due to $$4n+(n+n+2)-6n=2$$ it follows that $X_{D_n}$ is a vertex-faithful simplicial sphere and thus the graph $\Gamma_{D_n}$ is planar.
\end{proof}

\subsection{Face-Transitive Surfaces}
The cubic graphs that arise from Frucht's construction yield simplicial surfaces by introducing suitable cycle double covers. Note, that in general the automorphism group of such a surface is a proper subgroup of the automorphism group of the underlying face graph. Thus, we do not necessarily obtain face-transitive surfaces, i.e.\ surfaces whose automorphism group acts transitive on the set of faces of the surface. However, for a graphic regular representation of a given group $G$ we can always associate a face-transitive surface. Here, we give a example construction for $G=A_5$ that leads to a surface $X_G$ based on such a GRR with $Aut(X_G)=G$.

In \cite{Hoffman1991} the authors show that Cayley graphs posses a cycle double cover. In the cubic case, this can be linked to 3-edge colourings as follows:
\begin{remark}
    Let $G$ be a finite group with generators $S$ and assume that $\mathcal{C}_{G,S}$ is a cubic Cayley graph. Then one of the following cases is true:
    \begin{enumerate}
        \item The group $G$ is generated by three distinct involutions $S=\{s_1,s_2,s_3\}$ corresponding to a 3-edge colouring (each involution has a distinct colour). As before, we can obtain a cycle double cover of $\mathcal{C}_{G,S}$ by alternating between colours.
        \item The group $G$ is generated by one involution $s$ and another element $x$ with $|x|>2$. We have seen that if $|x|$ is even, we can obtain a 3-edge colouring of $\mathcal{C}_{G,S}$. However, we can always obtain a cycle double cover via the cycles $(g,g\cdot x,\dots,g\cdot x^{|x|-1})$ and $(g,g\cdot s, g \cdot sx,\dots,g \cdot (sx)^{|sx|-1},g \cdot (sx)^{|sx|-1}\cdot s)$, for $g\in G.$
    \end{enumerate}
    
\end{remark}

In both cases the defined cycle double covers are invariant under the action of $G\leq \Aut(\mathcal{C}_{G,S})$ and in the case $\Aut(\mathcal{C}_{G,S})\cong G$ we always obtain a simplicial surface $X_{G,S}$ with $|G|$ faces and automorphism group isomorphic to $G$.

\begin{remark}
 To obtain a simplicial surface from a 3-edge coloured Cayley graph we can use the same construction as in the previous chapters. Note that, for involutions $s_1,s_2,s_3$, the length of the cycles are given by $\lvert s_i \cdot s_j \rvert \cdot 2 $. In the case that $S=\{s,x,x^{-1}\}$, with $|s|=2$ and $|x|=2n$ for $n>1$ the lengths of the cycles are given by $|x|,|s\cdot x|\cdot 2, |s\cdot x^{-1}|\cdot 2$.
\end{remark}

We can give a criterion whether or not the $X_{G,S}$ is vertex-faithful.

\begin{theorem}
    The surface $X=X_{G,S}$ is vertex-faithful if and only if
    \begin{enumerate}
        \item We have $S=\{s_1,s_2,s_3\}$ with three distinct involutions or
        \item $S=\{s,x,x^{-1} \}$ as above and $\langle sx \rangle \cap \langle x \rangle = \{1 \}.$
    \end{enumerate}
\end{theorem}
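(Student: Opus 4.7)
The plan is to invoke the criterion already established in the excerpt, namely that the simplicial surface recovered from a cycle double cover is vertex-faithful if and only if no two of its umbrella cycles share more than one edge of the underlying cubic graph. The theorem is then reduced to a purely group-theoretic analysis of pairwise intersections of the cycles listed in the preceding remark, which naturally splits according to whether $S$ consists of three involutions or of $\{s,x,x^{-1}\}$.

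For case (1) I would argue as follows. Each umbrella is an alternating $(s_i,s_j)$-cycle, so a shared edge between two cycles must be of a common colour. Two cycles of the same colour pair are either equal or vertex-disjoint, so the only delicate case is an $(s_i,s_j)$-cycle and an $(s_i,s_k)$-cycle through a common vertex $g$. Both contain the anchor $s_i$-edge $\{g,gs_i\}$. The $s_i$-edges visited by either cycle have the form $\{g(s_is_j)^p, g(s_is_j)^p s_i\}$ and $\{g(s_is_k)^q, g(s_is_k)^q s_i\}$ respectively, so a second common $s_i$-edge forces a nontrivial relation $(s_is_j)^p=(s_is_k)^q$ in $G$. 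I would then rule out such a relation using the standing assumption $\Aut(\mathcal{C}_{G,S})\cong G$: any nontrivial element of $\langle s_is_j\rangle\cap\langle s_is_k\rangle$ would produce a colour-preserving graph automorphism fixing a vertex, contradicting the regularity of $\Aut(\mathcal{C}_{G,S})$ on nodes.

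Case (2) is more concrete. The $x$-cycle through $g$ has $x$-edges $\{gx^k,gx^{k+1}\}$, and the $(s,x)$-cycle through $g$ alternates $s$- and $x$-edges, its $x$-edges being of the form $\{g(sx)^l s,g(sx)^{l+1}\}$. Since the $x$-cycle contains no $s$-edges, the overlap consists purely of $x$-edges. There is always the canonical shared edge coming from $(sx)^{-1}=x^{-1}s$, giving $\{gx^{-1},g\}$. I would show that a second matching of $\{gx^k,gx^{k+1}\}$ with $\{g(sx)^l s,g(sx)^{l+1}\}$ forces $(sx)^{l+1}=x^{k+1}$ (with $(sx)^l s=x^k$), whence a nontrivial element of $\langle sx\rangle\cap \langle x\rangle$. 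Conversely, any nontrivial element of that intersection produces a second shared edge by translating the canonical one. A parallel argument handles two distinct $(s,x)$-cycles meeting at a vertex: their overlap beyond the obvious $s$-edge again measures $\langle sx\rangle\cap \langle x\rangle$. Combining the two directions gives the stated equivalence.

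The main obstacle I anticipate is the case (1) step: producing, from any putative nontrivial element of $\langle s_is_j\rangle\cap\langle s_is_k\rangle$, an explicit graph automorphism that contradicts the GRR hypothesis. Case (2) is essentially a coset computation once the cycles are written in terms of $\langle x\rangle$ and $\langle sx\rangle$, and the passage from ``two shared $x$-edges'' to ``nontrivial intersection'' is a short algebraic manipulation. The $\Rightarrow$ direction of the theorem is then immediate, since $|S|=3$ in a cubic Cayley graph forces one of the two structural dichotomies (1) or (2).
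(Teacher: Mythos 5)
Your overall strategy mirrors the paper's: reduce vertex-faithfulness to the statement that no two cycles of the double cover share two edges, then split on the two generator types. The genuine gap is in case (1). A small part of it is that an undirected $s_i$-edge $\{u,us_i\}$ coincides with $\{v,vs_i\}$ when $u=v$ \emph{or} $u=vs_i$, so a second shared edge only forces $(s_is_j)^p\in\langle s_is_k\rangle\cup\langle s_is_k\rangle s_i$ rather than the cleaner relation you write; both matchings must be handled. The serious part is the step you yourself flag as the anticipated obstacle: a nontrivial element of $\langle s_is_j\rangle\cap\langle s_is_k\rangle$ does \emph{not} produce a colour-preserving automorphism of $\mathcal{C}_{G,S}$ fixing a node. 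Any colour-preserving automorphism of a Cayley graph that fixes a node is already the identity (propagate along the generators), with or without the GRR hypothesis, so if your argument were valid it would show that such intersections are always trivial. They are not: for $G=S_3$ with $S$ the three transpositions one has $\langle s_1s_2\rangle=\langle s_1s_3\rangle=A_3$, and correspondingly the $(s_1,s_2)$- and $(s_1,s_3)$-alternating cycles in $K_{3,3}$ are both Hamiltonian and share all three $s_1$-edges, so that surface is in fact not vertex-faithful. Case (1) therefore needs a genuine group-theoretic input controlling $\langle s_i,s_j\rangle\cap\langle s_i,s_k\rangle$ which your proposal does not supply; for what it is worth, the paper's own proof has the same difficulty, jumping from $|\langle s_1,s_2\rangle\cap\langle s_1,s_3\rangle|>|\langle s_1\rangle|$ to a contradiction with the mere distinctness of the involutions, and it omits case (2) and the converse direction entirely.

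Your case (2) is in much better shape and goes beyond what the paper writes down: expressing the $\langle x\rangle$-cycles and the mixed $(s,x)$-cycles in terms of the cosets of $\langle x\rangle$ and $\langle sx\rangle$ does reduce a second shared $x$-edge to a nontrivial element of $\langle sx\rangle\cap\langle x\rangle$ (the alternative orientation of the matching forces $x^2=1$ and is excluded since $|x|>2$), and the converse is the translation you describe. However, the ``parallel argument'' for two distinct mixed cycles meeting along an $s$-edge is not literally parallel: their further intersections are governed by identities of the form $(sx)^l=(xs)^m$, i.e.\ by $\langle sx\rangle\cap s\langle sx\rangle s$, not by $\langle sx\rangle\cap\langle x\rangle$, so that sub-case needs its own computation before the stated equivalence is established.
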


\begin{proof}
    Let $\Gamma_{S}$ be a Cayley graph $\mathcal{C}_{G,S}$. The surface $X$ is vertex-faithful if and only no two cycles share two edges. Since the Cayley graph is vertex-transitive we can assume that the identity element lies on such an edge. We need to proof that no cycles share two common edges. Let $S=\{s_1,s_2,s_3\}$ and and assume that the cycles corresponding to $s_1,s_2$ and $s_1,s_3$ share two common edges. It follows that  $|\langle s_1,s_2 \rangle \cap \langle s_1,s_3 \rangle|>\langle s_1 \rangle $ and hence $\langle x,y \rangle = \langle x,z \rangle$ contradicting the assumption that the three involutions are distinct.
\end{proof}

Using the techniques from the previous sections we get a surface with Euler Characteristic $1$ and automorphism group isomorphic to $A_5$. Its facegraph is shown in Figure \ref{a_5_graph_cayley}. For general vertex-transitive graphs which are not Cayley graphs it is in general an open problem to determine a cycle double cover. In the previous section, we see an example in Figure \ref{a_5_graph_orb} of such a graph yielding a surface on $30$ vertices with automorphism group isomorphic to $A_5$.

\subsection{Surfaces with Automorphism Group  \texorpdfstring{$A_5$}{Alternating Group of Degree 5}}
In the previous sections, three surfaces with automorphism group isomorphic to $A_5$ are shown. The corresponding face graphs are given in Figure \ref{petersen}, Figure \ref{a_5_graph_cayley} and Figure \ref{a_5_graph_orb}. The resulting surfaces are all face-transitive and of Euler characteristic $1$. In this section, we see that there exists a simplicial sphere with automorphism group isomorphic to $A_5$ together with an embedding with equilateral triangles such that resulting automorphism group in $\mathrm{O}(3)$ is isomorphic to $A_5$.
The well-known Snub dodecahedron, an Archimedean solid, has an automorphism group isomorphic to $G=A_5$ and its vertex-edge graph is isomorphic to a Cayley graph of $A_5$, obtained via the generators $S=\{(1,2)(3,4), (1,2,3,4,5), (1,3,5)\}$. Replacing the pentagonal faces with $5$ triangles, see kis operator, see \cite{conway2008}, we get a triangulated sphere with automorphism group isomorphic to $A_5$, called pentakis snub dodecahedron, see Figure \ref{snub_dodecahedron}. In Figure \ref{a_5_graph_snub} we see the facegraph of the pentakis snub dodecahedron, a planar cubic graph with automorphism group isomorphic to $A_5$.

\begin{figure}[H]
\begin{minipage}{.49\textwidth}
    \centering
    \includegraphics[height=4cm]{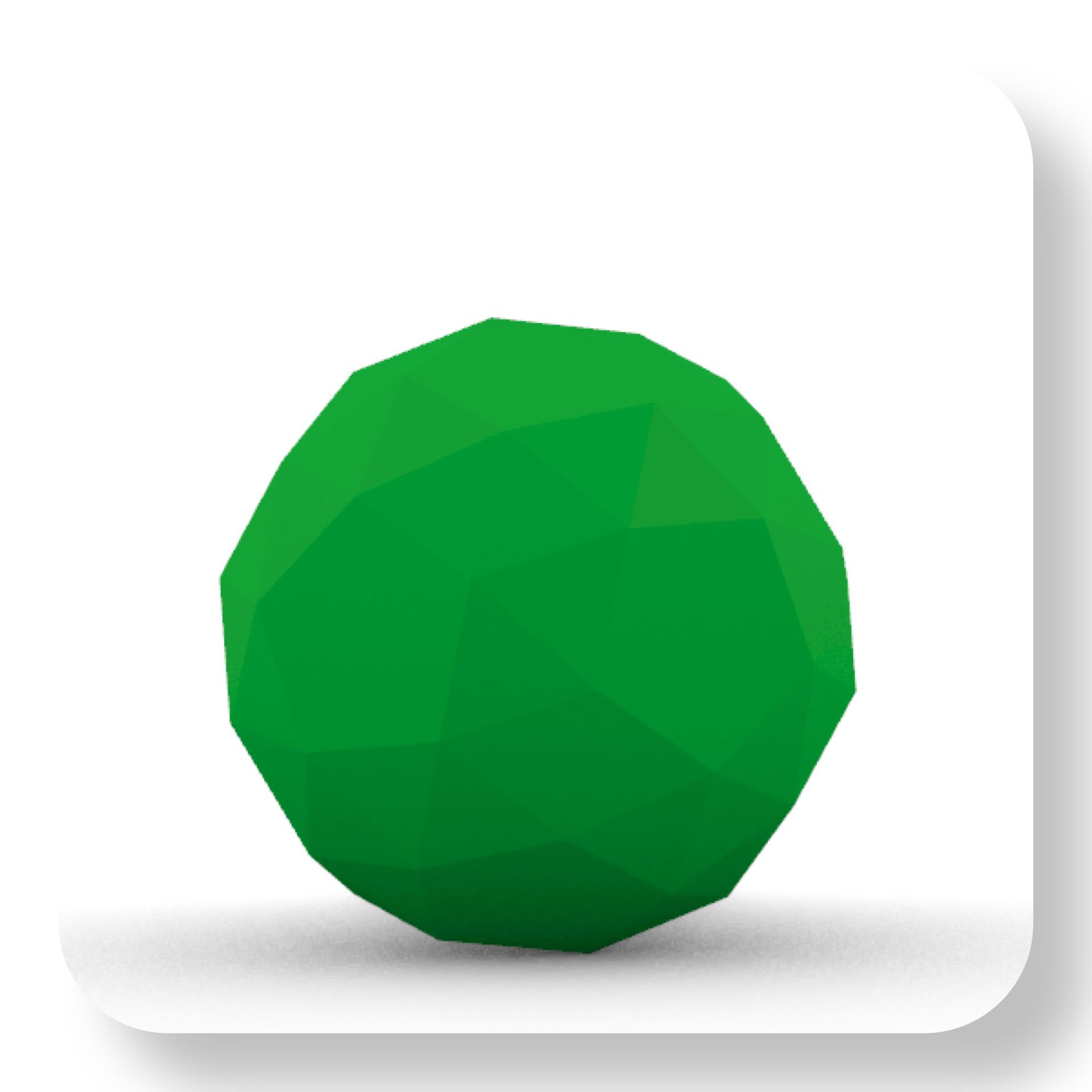}
    
    \includegraphics[height=4cm]{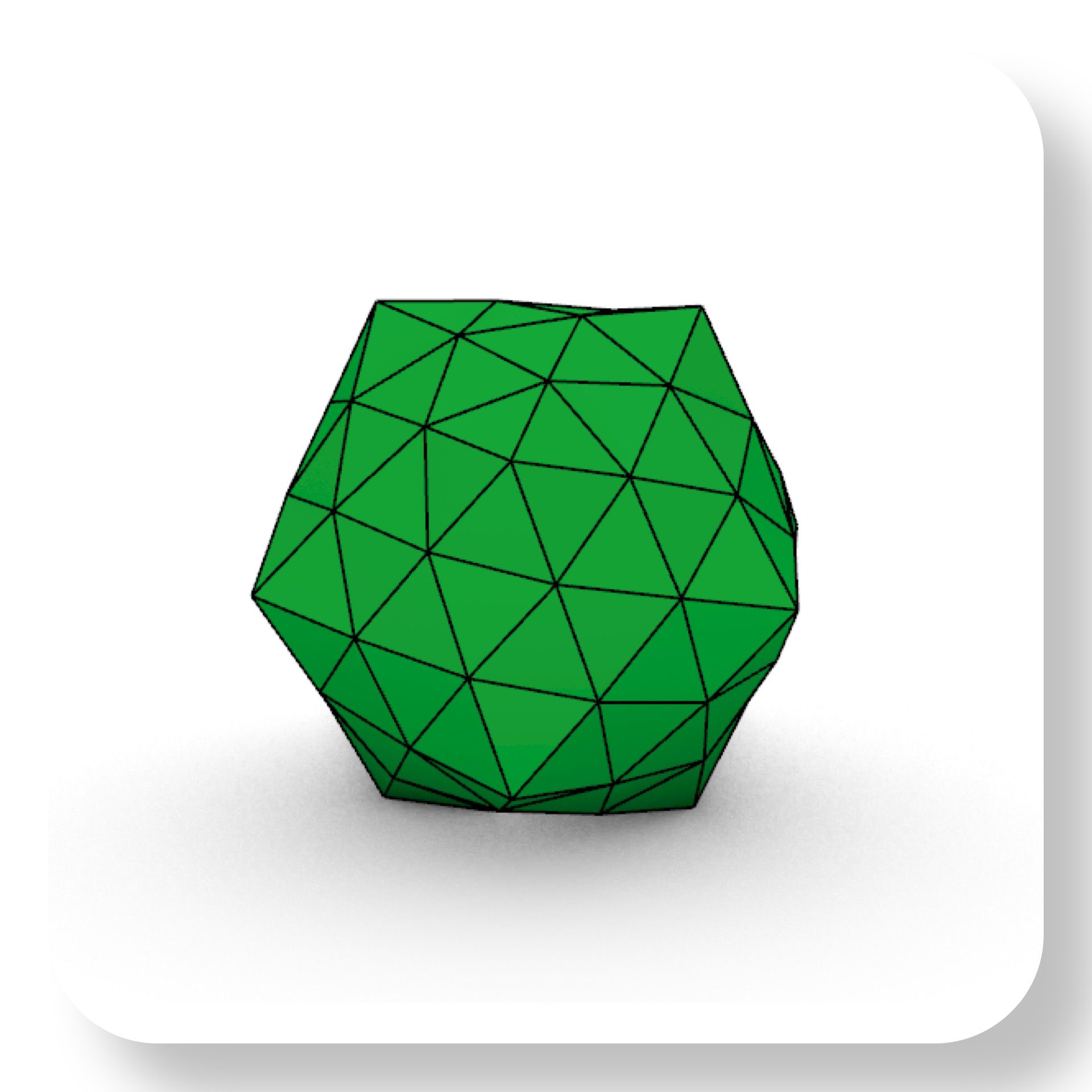}
\subcaption{}
\label{snub_dodecahedron}
\end{minipage}
\begin{minipage}{.4\textwidth}
    \centering
    \vspace{.5cm}
    \resizebox{!}{7.5cm}{\input{SnubDodecahedronExtended}}
\subcaption{}
\label{a_5_graph_snub}
\end{minipage}
\caption{(a) Snub dodecahedron and pentakis snub dodecahedron (b) Facegraph of the \\pentakis snub dodecahedron}
\end{figure}

\section{Embeddings of Simplicial Surfaces}\label{emb}
Inspired by the simplicial surfaces with cyclic or dihedral automorphism groups constructed in section \ref{simplicial_surface_construction}, we construct infinite families of simplicial surfaces and provide corresponding embeddings. Here, we focus on computing embeddings with all edge lengths $1$. In particular, we show the following: 
\begin{theorem}
    For $G=C_n$ with $n\geq 3$ or for $G=D_n$ with $n \geq 4$ there exists a simplicial surface $X_G$ with automorphism group isomorphic to $G$ and $X_G$ can be embedded into $\mathbb{R}^3$ with equilateral triangles.
\end{theorem}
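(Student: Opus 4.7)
The plan is to realize each $G \in \{C_n, D_n\}$ as a subgroup of $\mathrm{O}(3)$ in the standard way --- $C_n$ as rotations about the $z$-axis by $2\pi/n$, and $D_n$ by adding a $\pi$-rotation about a horizontal axis --- and then to build the embedded surface as a union of $G$-orbits of a small number of ``seed'' points in $\mathbb{R}^3$. Concretely, one fixes representatives $v_1,\dots,v_k \in \mathbb{R}^3$ (each with trivial $G$-stabilizer) together with an initial collection of triangles $\tau_1,\dots,\tau_m$ whose vertices lie in $\bigcup_i G \cdot v_i$; the full face set is $G \cdot \{\tau_1,\dots,\tau_m\}$, which is $G$-invariant by construction, so the abstract simplicial surface $X_G$ satisfies $G \hookrightarrow \Aut(X_G)$ automatically, and the canonical map $\phi$ sending each vertex to its chosen point is equivariant, giving $G \hookrightarrow \Aut(\phi(X_G)) \leq \mathrm{O}(3)$.

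The concrete model I have in mind is an antiprism-like construction: two coaxial regular $n$-gonal rings of equilateral triangles at heights $\pm h$, capped above and below by cones of triangles meeting at apex points on the $z$-axis, so that $D_n$ is realized on this ``bicapped antiprism''. By adjusting a radius parameter $r$, with $h$ determined by the condition that all triangles be equilateral, one obtains a one-parameter family of $D_n$-invariant candidates, and solving a single polynomial equation in $r$ produces a genuine equilateral embedding for every $n \geq 4$. For $G = C_n$, one replaces one of the two caps by a ``chirally twisted'' additional layer of triangles --- an extra $C_n$-orbit of vertices placed off every horizontal and vertical reflection plane, with an angular twist parameter tuned so that the edge-length conditions are still met --- so that the surface remains equilateral but admits no reflective symmetry. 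In both cases the resulting surface is topologically a sphere built from $G$-orbits of triangle rings, in line with the planar constructions $X_{C_n}$ and $X_{D_n}$ from Section \ref{simplicial_surface_construction}.

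The main obstacles are (i) arranging the combinatorial incidence data so that the umbrella condition at every vertex is satisfied and so that $\Aut(\mathcal{F}(X_G)) \cong G$, and (ii) producing a real solution of the resulting edge-length system for every relevant $n$. For (i), in the cyclic case one must verify that the chirally placed orbit breaks every reflective symmetry; this can be done via a vertex-counter or cycle-triplet argument in the spirit of the Frucht-style proofs earlier in the paper, by showing that the twisted orbit contributes a unique combinatorial signature not preserved by any orientation-reversing permutation. For (ii), the plan is to construct the embedding iteratively --- first fix the rigid antiprism (whose equilateral radius $r$ is explicit) and then deform by one continuous parameter to place the additional twisted layer --- so that existence of a real equilateral configuration follows from an intermediate-value argument on a one-parameter family rather than a closed-form solution. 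I expect obstacle (i) to be the genuinely hard one, since controlling $\Aut(\mathcal{F}(X_G))$ against accidental enlargement to $D_n$ in the cyclic case is exactly the issue that motivates the chirality and has to be checked uniformly in $n$.
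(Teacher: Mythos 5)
Your high-level strategy --- realize $G$ inside $\mathrm{O}(3)$, build the surface as the $G$-orbit of a few seed triangles, and get $G\hookrightarrow\Aut(\phi(X_G))$ from equivariance --- is exactly the paper's strategy, and your instinct that the cyclic case needs a chirally placed orbit is also the right one. But the concrete model you propose has two gaps that would each sink the proof. First, the ``bicapped antiprism'' (gyroelongated bipyramid) does \emph{not} have automorphism group $D_n$ of order $2n$: because the two caps are combinatorially identical, the surface admits an extra automorphism swapping top and bottom (the combinatorial shadow of the rotary reflection), so $|\Aut|\geq 4n$; worse, for $n=5$ your candidate \emph{is} the icosahedron, with automorphism group of order $120$. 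You address accidental symmetry only in the cyclic case, but it must be broken in the dihedral case too. The paper does this by making the two caps combinatorially inequivalent: in $Y^{(n,k)}$ the top apex has degree $n$ while the bottom apex has degree $2n$ with $n$ extra degree-$3$ vertices interposed, and the automorphism group is then pinned down by a vertex-degree argument (unique vertex of maximal degree must be fixed, and an automorphism of a vertex-faithful surface is determined by its effect on the vertices of one face).

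Second, the equilateral caps you need do not exist for $n\geq 6$ in the convex picture: a ring of $n$ consecutive unit-length edges winding once around the axis has circumradius $\frac{1}{2\sin(\pi/n)}\geq 1$, so no apex on the axis can be at distance $1$ from all ring vertices (degenerate at $n=6$, impossible for $n\geq 7$). Your intermediate-value deformation cannot repair this, since there is no real solution for any radius. The paper's essential extra idea is the winding parameter $l$ with $\gcd(n,l)=1$ and $\cos(2\pi l/n)\leq\frac{1}{2}$: consecutive ring vertices are separated by angle $2\pi l/n\geq 60^{\circ}$, i.e.\ the ``pyramid'' sits over a star polygon and the realization is an edge-length-$1$ map in the sense of Definition~\ref{embedding} rather than an embedded convex solid (this is also why the paper obtains several inequivalent realizations for $n=7$). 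Without both fixes --- combinatorially asymmetric caps and the star-polygon winding --- your construction fails for all $n\geq 6$ and gives the wrong automorphism group for the dihedral family even when it exists.
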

More precisely, for $n \geq 3$ resp. $n\geq 4$ we construct families $(X^{(n,k)})_{k\in \mathbb{N}_0}$ resp. $(Y^{(n,k)})_{k\in \mathbb{N}_0}$ such that 
\begin{align*}
 & C_n \cong \Aut(\phi(X^{(n,k)})) \cong \Aut(X^{(n,k)})\hookrightarrow\Aut(\mathcal{F}(X^{(n,k)}))\\
   & D_n\cong \Aut(\phi(X^{(n,k)}))\cong \Aut(Y^{(n,k)})\hookrightarrow \Aut(\mathcal{F}(Y^{(n,k)}))
\end{align*}
The idea behind the construction is to combine tetrahedra, $n$-pyramids and $n$-strips (Figure \ref{pyramid}) in such a way that they preserve the desired cyclic or dihedral symmetry.
\begin{figure}[H]
\begin{minipage}{.5\textwidth}
    \centering
    \includegraphics[height=4cm]{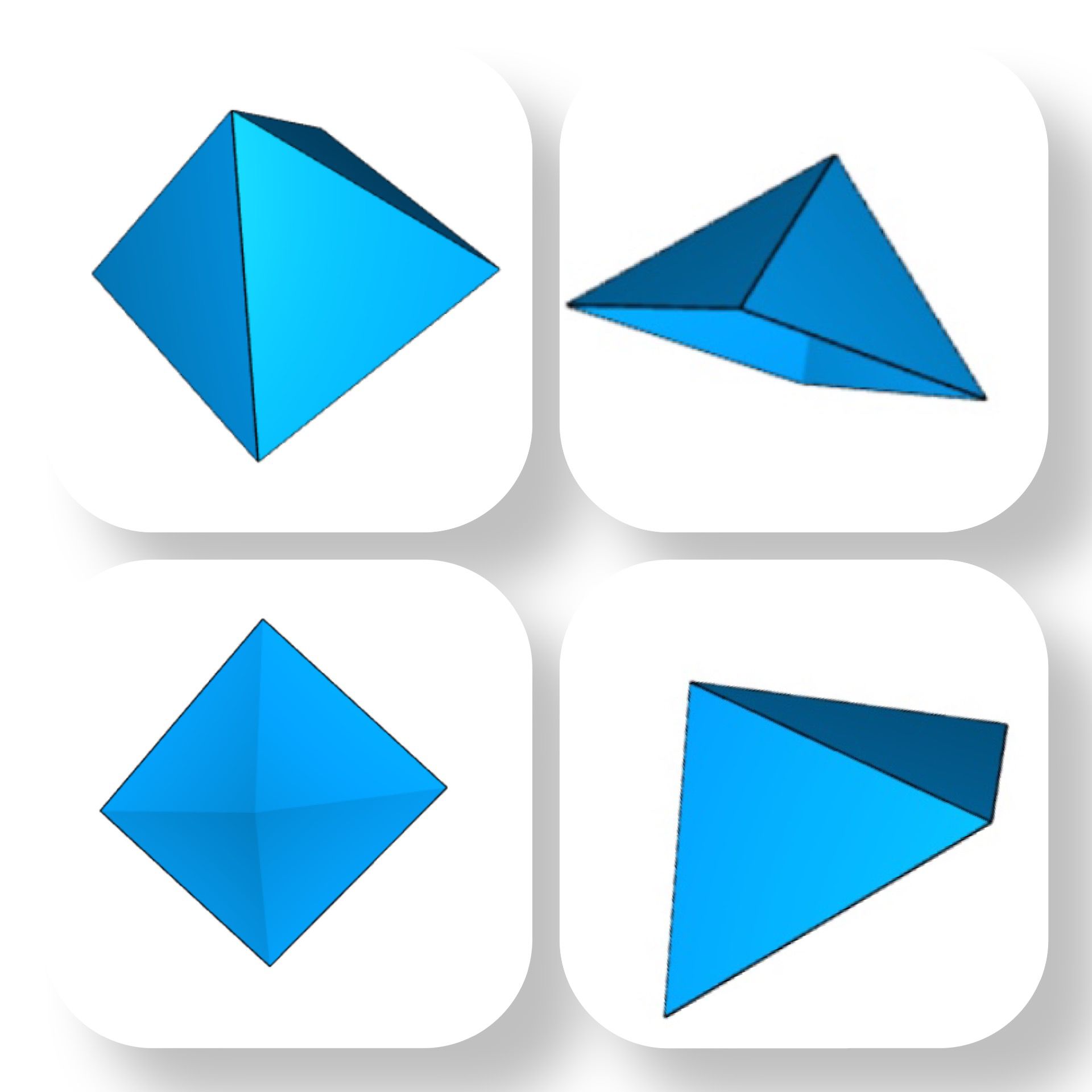}
    \includegraphics[height=4cm]{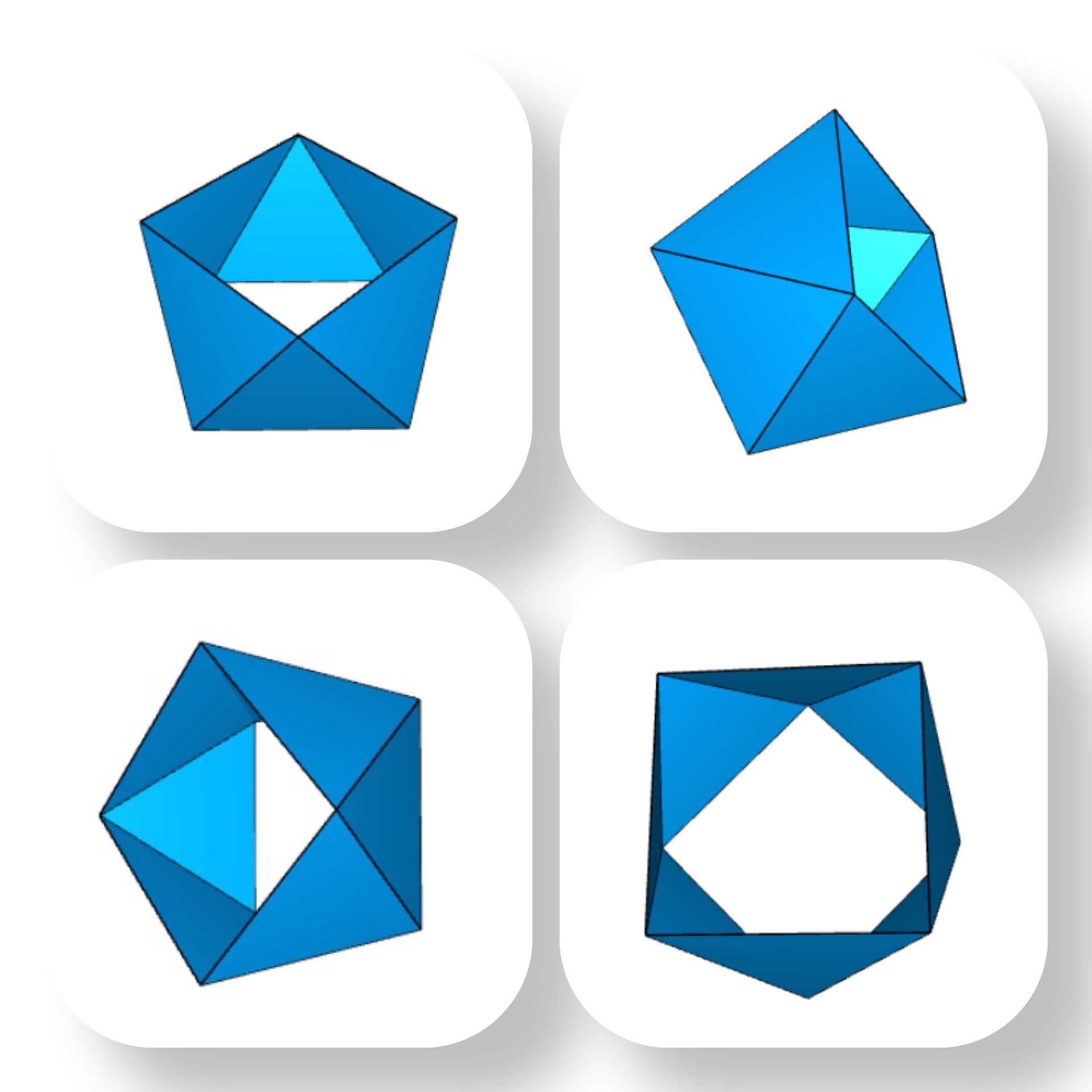}
\vspace{0.4cm}
\subcaption{}
\label{pyramid}
\end{minipage}
\begin{minipage}{.5\textwidth}
    \centering
    \vspace{.5cm}
    \resizebox{!}{4cm}{\begin{tikzpicture}[vertexBall, edgeDouble, faceStyle, scale=2]

\coordinate (V1_1) at (0., 0.);
\coordinate (V2_1) at (1., 0.);
\coordinate (V3_1) at (0.4999999999999999, 0.8660254037844386);
\coordinate (V4_1) at (-0.4999999999999999, 0.8660254037844385);
\coordinate (V5_1) at (0.5000000000000001, -0.8660254037844386);
\coordinate (V5_2) at (-0.9999999999999997, 0.);
\coordinate (V6_1) at (1.5, 0.8660254037844388);
\coordinate (V6_2) at (1.5, -0.8660254037844384);
\coordinate (V6_3) at (0., 1.732050807568877);
\coordinate (V6_4) at (-1.5, 0.8660254037844384);

\fill[face]  (V2_1) -- (V3_1) -- (V1_1) -- cycle;
\node[faceLabel] at (barycentric cs:V2_1=1,V3_1=1,V1_1=1) {$1$};
\fill[face]  (V5_1) -- (V6_2) -- (V2_1) -- cycle;
\node[faceLabel] at (barycentric cs:V5_1=1,V6_2=1,V2_1=1) {$8$};
\fill[face]  (V1_1) -- (V5_1) -- (V2_1) -- cycle;
\node[faceLabel] at (barycentric cs:V1_1=1,V5_1=1,V2_1=1) {$4$};
\fill[face]  (V2_1) -- (V6_1) -- (V3_1) -- cycle;
\node[faceLabel] at (barycentric cs:V2_1=1,V6_1=1,V3_1=1) {$5$};
\fill[face]  (V4_1) -- (V5_2) -- (V1_1) -- cycle;
\node[faceLabel] at (barycentric cs:V4_1=1,V5_2=1,V1_1=1) {$3$};
\fill[face]  (V3_1) -- (V6_3) -- (V4_1) -- cycle;
\node[faceLabel] at (barycentric cs:V3_1=1,V6_3=1,V4_1=1) {$6$};
\fill[face]  (V3_1) -- (V4_1) -- (V1_1) -- cycle;
\node[faceLabel] at (barycentric cs:V3_1=1,V4_1=1,V1_1=1) {$2$};
\fill[face]  (V4_1) -- (V6_4) -- (V5_2) -- cycle;
\node[faceLabel] at (barycentric cs:V4_1=1,V6_4=1,V5_2=1) {$7$};

\draw[edge] (V2_1) -- node[edgeLabel] {$1$} (V1_1);
\draw[edge] (V1_1) -- node[edgeLabel] {$2$} (V3_1);
\draw[edge] (V1_1) -- node[edgeLabel] {$3$} (V4_1);
\draw[edge] (V5_1) -- node[edgeLabel] {$4$} (V1_1);
\draw[edge] (V1_1) -- node[edgeLabel] {$4$} (V5_2);
\draw[edge] (V3_1) -- node[edgeLabel] {$5$} (V2_1);
\draw[edge] (V2_1) -- node[edgeLabel] {$6$} (V5_1);
\draw[edge] (V6_1) -- node[edgeLabel] {$7$} (V2_1);
\draw[edge] (V2_1) -- node[edgeLabel] {$7$} (V6_2);
\draw[edge] (V4_1) -- node[edgeLabel] {$8$} (V3_1);
\draw[edge] (V3_1) -- node[edgeLabel] {$9$} (V6_1);
\draw[edge] (V6_3) -- node[edgeLabel] {$9$} (V3_1);
\draw[edge] (V5_2) -- node[edgeLabel] {$10$} (V4_1);
\draw[edge] (V4_1) -- node[edgeLabel] {$11$} (V6_3);
\draw[edge] (V6_4) -- node[edgeLabel] {$11$} (V4_1);
\draw[edge] (V6_2) -- node[edgeLabel] {$12$} (V5_1);
\draw[edge] (V5_2) -- node[edgeLabel] {$12$} (V6_4);

\vertexLabelR{V1_1}{left}{$1$}
\vertexLabelR{V2_1}{left}{$2$}
\vertexLabelR{V3_1}{left}{$3$}
\vertexLabelR{V4_1}{left}{$4$}
\vertexLabelR{V5_1}{left}{$5$}
\vertexLabelR{V5_2}{left}{$5$}
\vertexLabelR{V6_1}{left}{$6$}
\vertexLabelR{V6_2}{left}{$6$}
\vertexLabelR{V6_3}{left}{$6$}
\vertexLabelR{V6_4}{left}{$6$}

\end{tikzpicture}}
\subcaption{}
\label{octa}
\end{minipage}
\caption{(a) A simplicial 4-pyramid (left); A simplicial 4-strip (right) (b) Folding plan of a octahedron}
\end{figure}
 Since we aim to construct vertex-faithful surfaces, we can define the surfaces by constructing a subset $S$ of the faces, such that the remaining faces are given by a suitable group action on the vertices.  For example, the group $G=\langle (2,3,4,5) \rangle \leq S_6$ gives rise to a combinatorial Octahedron by computing the union of the following orbits:
\begin{align*}
&\{ 1,2,3\}^G\cup\{2,3,6\}^G=\{\{1,2,3\},\{1,3,4\},\{1,4,5\},\{1,2,5\},\{2,3,6\},\{3,4,6\},\{4,5,6\},\{2,5,6\}\}
\end{align*}
Here, the group $G$ acts on a subset by permuting the elements of the subset. 
\begin{remark}
     For simplicity, we identify a face $f$ of a vertex faithful simplicial surface with its set of incident vertices, i.e.\ we write $f=\{v_1,v_2,v_3\},$ if $v_1,v_2,v_3$ are the three vertices in the surface that are incident to $f.$  
\end{remark}
\subsection{Construction of cyclic family}\label{cyclic_embedding}
In this subsection, we give a detailed construction of the simplicial surfaces $X_{C_n}$ whose face graphs have cyclic automorphism groups. Here, we define the surfaces by computing the corresponding vertices of faces instead of a cycle double cover, as seen in section \ref{simplicial_surface_construction}. Furthermore we present embeddings of these surfaces consisting of equilateral triangles and prove that these surfaces and their computed embeddings also allow cyclic symmetries.   

Let therefore $n\geq 3$ be a natural number, $k$ be a non-negative integer and $\Gamma=G_{C_n}$ be the cubic graph, constructed in \ref{cyclic_dihedral_graph}, satisfying $\Aut(\Gamma)\cong C_n$. Since $\pi=(a_1,\ldots,a_n)\ldots(f_1,\ldots,f_n)$ is an automorphism of $\Gamma$ with $\ord(\pi)=n,$ the automorphism group of $\Gamma$ is given by $\langle \pi \rangle.$

In the following, we use the cyclic group
\begin{align*}
    G=\langle\, \underbrace{(1,\ldots,n)\ldots((k+2)n+1,\ldots,(k+3)n)}_{g\coloneqq}\,\rangle .
\end{align*}
 to obtain the vertices of faces of the  desired surface $X^{(n,k)}$ which fully determine the incidence structure of the surface. In the following, we define the set of faces of our surface $X^{(n,k)}$ by   \[
\{ f_{a_i},f_{b_i},f_{c_i},f_{d_i},f_{e_i},f_{f_i}\mid i=1,\ldots,n\}\cup \{ f_{i,j}, \overline{f_{i,j}}\mid i=1,\ldots,n,j=1,\ldots,k\}. 
\]

The vertices incident to the faces 
$f_{a_1},f_{b_1},f_{c_1},f_{d_1},f_{e_1},f_{f_1}$ are given by
\begin{align*}
&f_{a_1}=\{(k+1)n+1,(k+2)n+1,(k+3)n+2\},\\
&f_{b_1}=\{kn+1,(k+2)n+1,(k+3)n+2\},
 f_{c_1}=\{kn+1,kn+2,(k+1)n+1\},\\
&f_{d_1}=\{1,2,(k+3)n+1\},
    f_{e_1}=\{kn+2,(k+1)n+1,(k+3)n+2\},\\
    &f_{f_1}=\{kn+1,(k+1)n+1,(k+2)n+1\}
\end{align*}
and for $0< j \leq k$ the vertices of faces of $f_{1,j}$ and $\overline{f_{1,j}}$ are given by 
\[
f_{1,j}=\{(j-1)n+1,(j-1)n+2,jn+1\},\overline{f_{1,j}}=\{(j-1)n+2,jn+1,jn+2\}.
\]
Then, we obtain the remaining vertices of faces by the action of $G$ on the above faces. In particular, the vertices of the faces $f_{a_{i+1}},\ldots,f_{f_{i+1}},$ are given by ${(f_{a_1})}^{g^i},\ldots,{(f_{f_1})}^{g^i}$ respectively.  
In the case that $k>0,$ we define the vertices of the faces $f_{i+1,j}$ resp. $\overline{f_{i+1,j}}$ by ${({f_{1,j}})}^{g^i}$ resp. ${(\overline{{f_{1,j}}})}^{g^i}.$   
For example, for $n=4$ and $k=0$, we obtain the following vertices of faces by considering the $G$-orbits of $f_{a_1},\ldots,f_{f_1}$:
\begin{align*}
\{\{5,9,14\},\{6,10,14\},\{7,11,14\},\{8,12,14\}\} &\cup
\{\{1,9,14\},\{2,10,14\},\{3,10,14\},\{4,11,14\} \}\cup \\
\{\{1,2,5\},\{2,3,6\},\{3,4,7\},\{1,4,8\}\}&\cup
\{\{1,2,13\},\{2,3,13\},\{3,4,13\},\{1,4,13\}\}\cup\\
\{\{2,5,14\},\{3,6,14\},\{4,7,14\},\{1,8,14\}\}&\cup
\{\{1,5,9\},\{2,6,10\},\{3,7,11\},\{4,8,12\}\}.
\end{align*}
\begin{remark}
The simplicial surface $X^{(n,k)}$ has the following properties: 
    \begin{itemize}
        \item The surface $X^{(n,k)}$ has Euler-Characteristic 2.
        \item The face graph of the simplicial surface $X^{(n,0)}$ is isomorphic to $G_{C_n}$.
    \end{itemize}
\end{remark}
Next, we seek to embed these simplicial surfaces into $\mathbb{R}^3$ as polyhedra constructed from equilateral triangles, see Definition \ref{embedding}. To achieve this, we assign 3D-coordinates to the vertices of $X^{(n,k)}$ described as follows:
Let $l$ be a natural number with $\gcd(n,l)=1$ and $\cos(\alpha)\leq \frac{1}{2},$ where $\alpha\coloneqq\frac{2\pi l}{n}$. Furthermore, let $h$ and $\rho$ be scalars defined by 
\[
h\coloneqq\sqrt{1+4\sin^2\left(\frac{\alpha}{4}\right)},
\]
\[
\rho\coloneqq\lVert\left(
\begin{array}{c}
     \cos(\alpha) \\
     \sin (\alpha) 
\end{array}\right)-\left(
\begin{array}{c}
     \cos(2\alpha) \\
     \sin (2\alpha) 
\end{array}\right)\rVert
=2\left(1-\cos(\alpha)\right).
\]
Similar to the construction of the vertices of faces of $X^{(n,k)}$, we define the 3D-coordinates of a subset of the vertices of the surface such that the remaining 3D-coordinates are then given by a suitable group action.
 An embedding $\phi:X^{(n,k)}_0\to \mathbb{R}^3$ of the simplicial surface $X^{(n,k)}$ which gives rise to a polyhedron constructed from equilateral triangles is then given by 
\begin{align*}
&    \phi(jn+1)=-jh \left(
    \begin{array}{ccc}
         0&0  &1
    \end{array}\right)^t+\left(
    \begin{array}{ccc}
         \cos(i\frac{\alpha}{2})&\sin(j\frac{\alpha}{2})  &0
    \end{array}\right)^t\\
    &    \phi(jn+2)=-jh \left(
    \begin{array}{ccc}
         0&0  &1
    \end{array}\right)^t+\left(
    \begin{array}{ccc}
         \cos((j+1)\frac{\alpha}{2})&\sin((j+1)\frac{\alpha}{2})  &0
    \end{array}\right)^t,
\end{align*}
for $j=0\ldots, k$ and 
\begin{align*}
    \phi((k+3)n+1)=&\sqrt{1-\rho^2} \left(
    \begin{array}{ccc}
         0&0  &1
    \end{array}
\right)^t,
    \phi((k+3)n+2)=-(kh+\sqrt{1-\rho^2}) \left(
    \begin{array}{ccc}
         0&0  &1
    \end{array}
\right)^t\\
\phi((k+1)n+1)=&\frac{1}{3}(\phi(kn+1)+\phi(kn+2)+\phi((k+3)n+2))\\
&+\frac{\phi((k+3)n+2)-\phi(kn+1))\times \phi(kn+2)-\phi((k+3)n+2))}{\|\phi((k+3)n+2)-\phi(kn+1))\times \phi(kn+2)-\phi((k+3)n+2))\|}\\
\phi((k+2)n+1)=&\frac{1}{3}(\phi(kn+1)+\phi(((k+1)n+1)+\phi((k+3)n+2))\\
&+\frac{\phi(kn+1)-\phi((k+3)n+2))\times \phi(kn+1)-\phi((kn+1)n+1))}{\|\phi(kn+1)-\phi((k+3)n+2))\times \phi(kn+1)-\phi((kn+1)n+1))\|}.
\end{align*}
The images of the remaining vertices of $X^{(n,k)}$ under $\phi$ are then given by 
\begin{align*}
& \phi ((jn+(i+1)))=\phi ((jn+1)^{g^i})={(M_\alpha)}^i \phi(jn+1),
\end{align*}
where $i=1,\ldots,n, j=0,\ldots,k$ and $M_\alpha$ is given by
\[
\left(\begin{array}{ccc}
     \cos(\alpha)&-\sin(\alpha) & 0 \\
     \sin(\alpha)&\cos(\alpha) &0\\
     0&0&1
\end{array}
\right).
\]
Since the euclidean norm is invariant under the multiplication with orthogonal matrices, it suffices to show the edges that are incident to the faces $f_{a_1},\ldots f_{f_1},f_{1,j},\overline{f_{1,j}}$ all have length 1. Since these edges have length 1 by construction, $\phi$ gives rise to an embedding of $X^{(n,k)}$ constructed from congruent triangles.
For simplicity we denote this embedding of $X^{(n,k)}$ by $\phi^{(n,k)}_l=\phi_l.$ 
Figure \ref{embc45} shows polyhedra that arise from the embedding $\phi_1$ of $X^{(4,0)}$ and the embedding $\phi_1$ of $X^{(5,0)}.$
\begin{figure}[H]
\begin{minipage}{.5\textwidth}
    \centering
    \includegraphics[height=4cm]{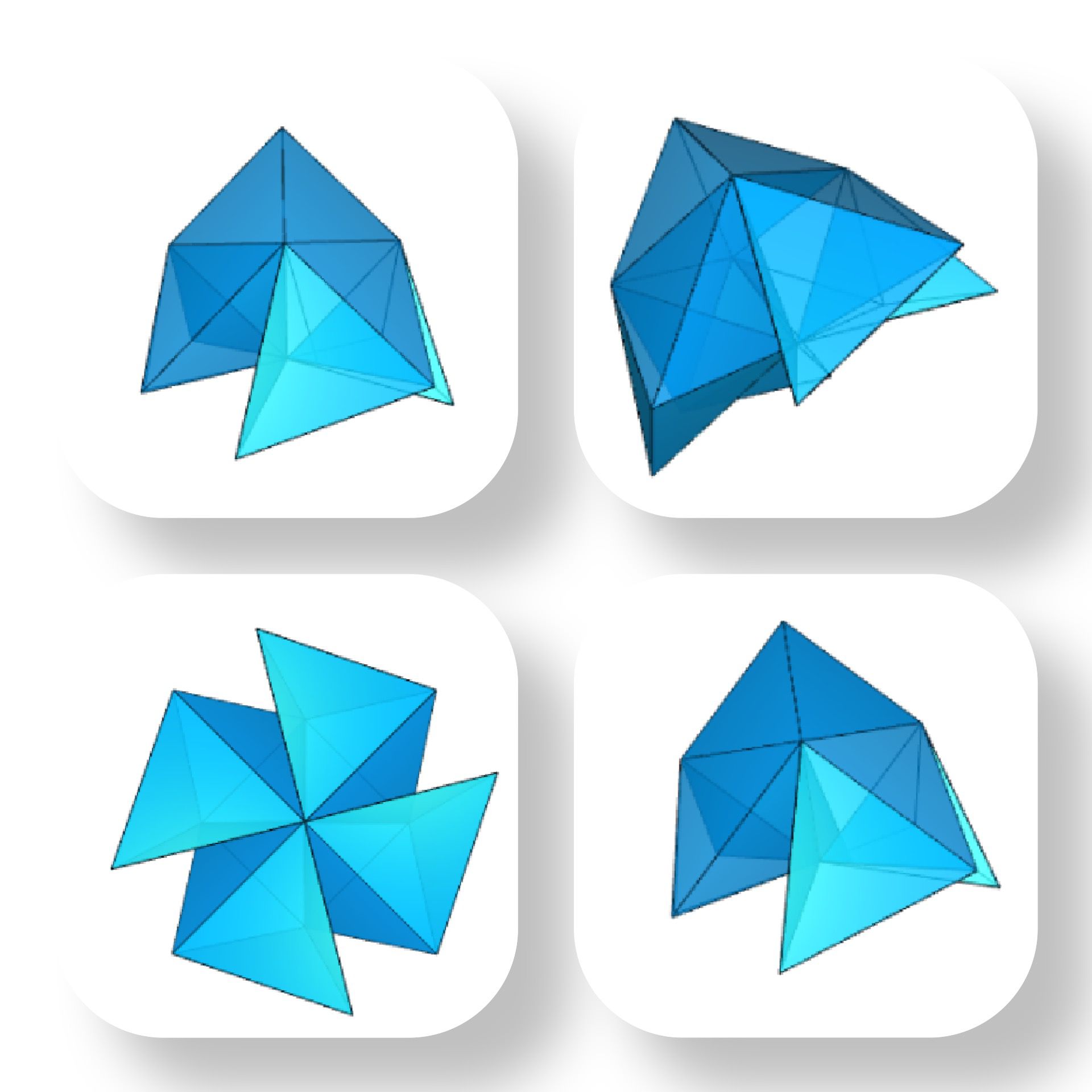}
    \includegraphics[height=4cm]{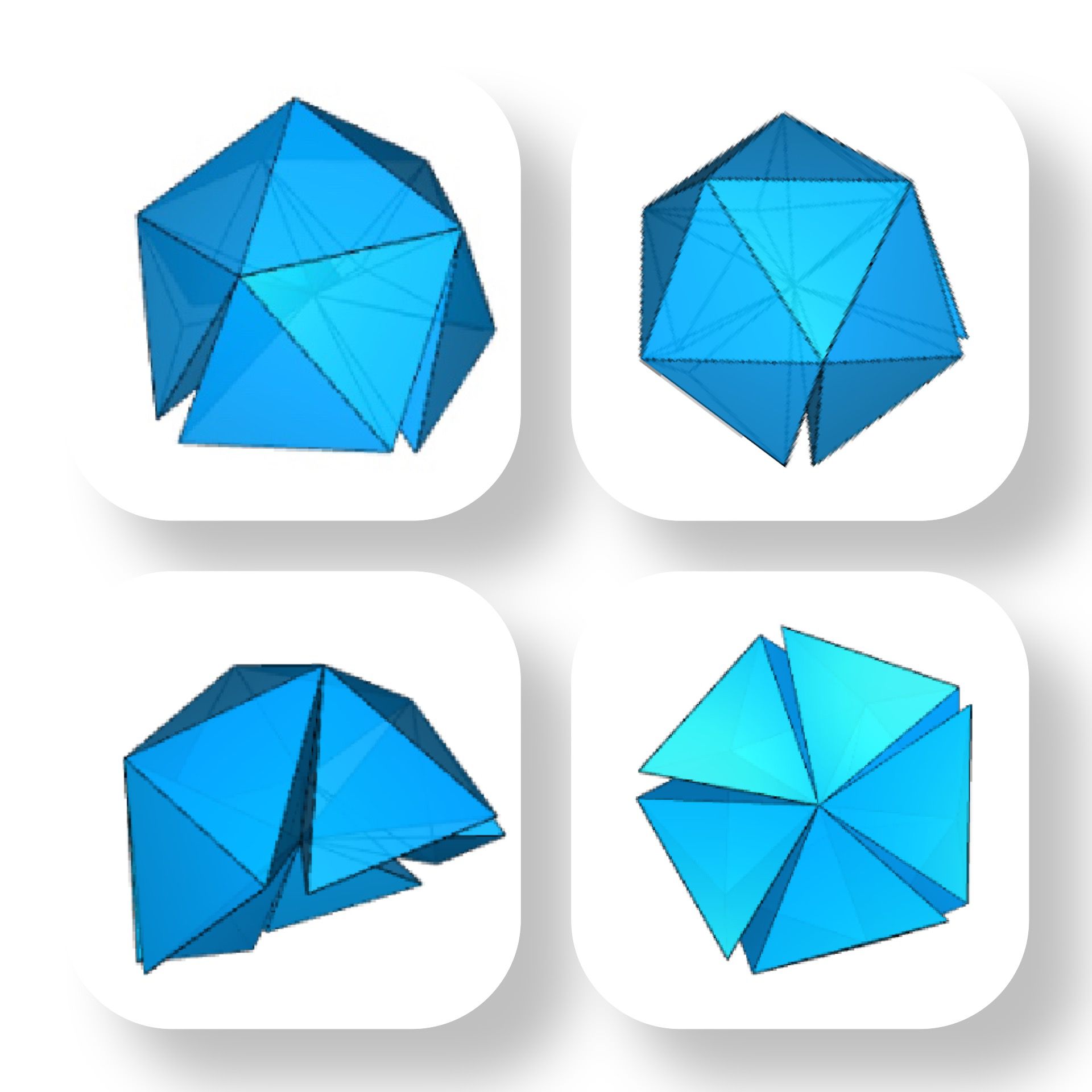}
\vspace{0.4cm}
\subcaption{}
\label{embc45}
\end{minipage}
\begin{minipage}{.5\textwidth}
    \centering
    \includegraphics[height=4cm]{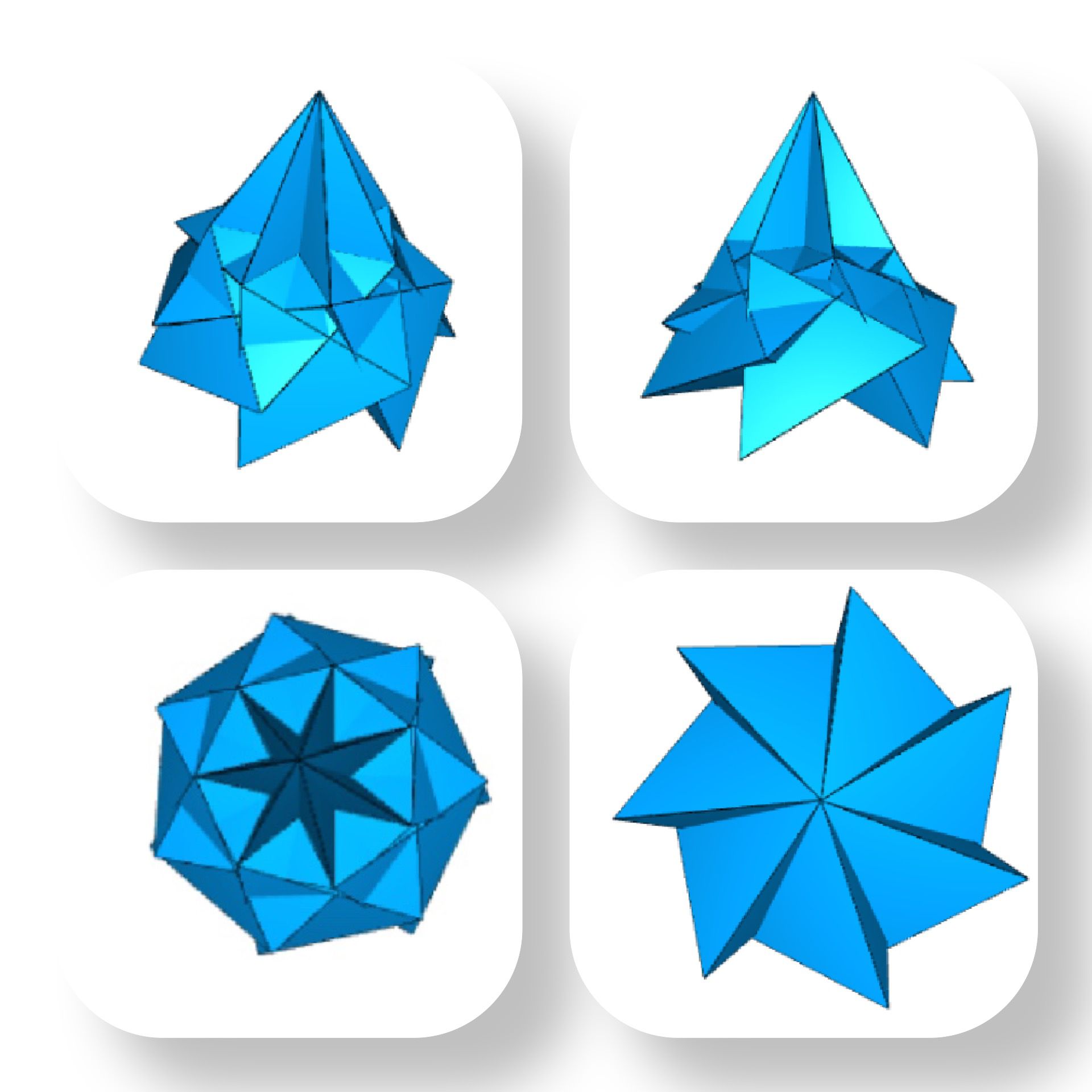}
    \includegraphics[height=4cm]{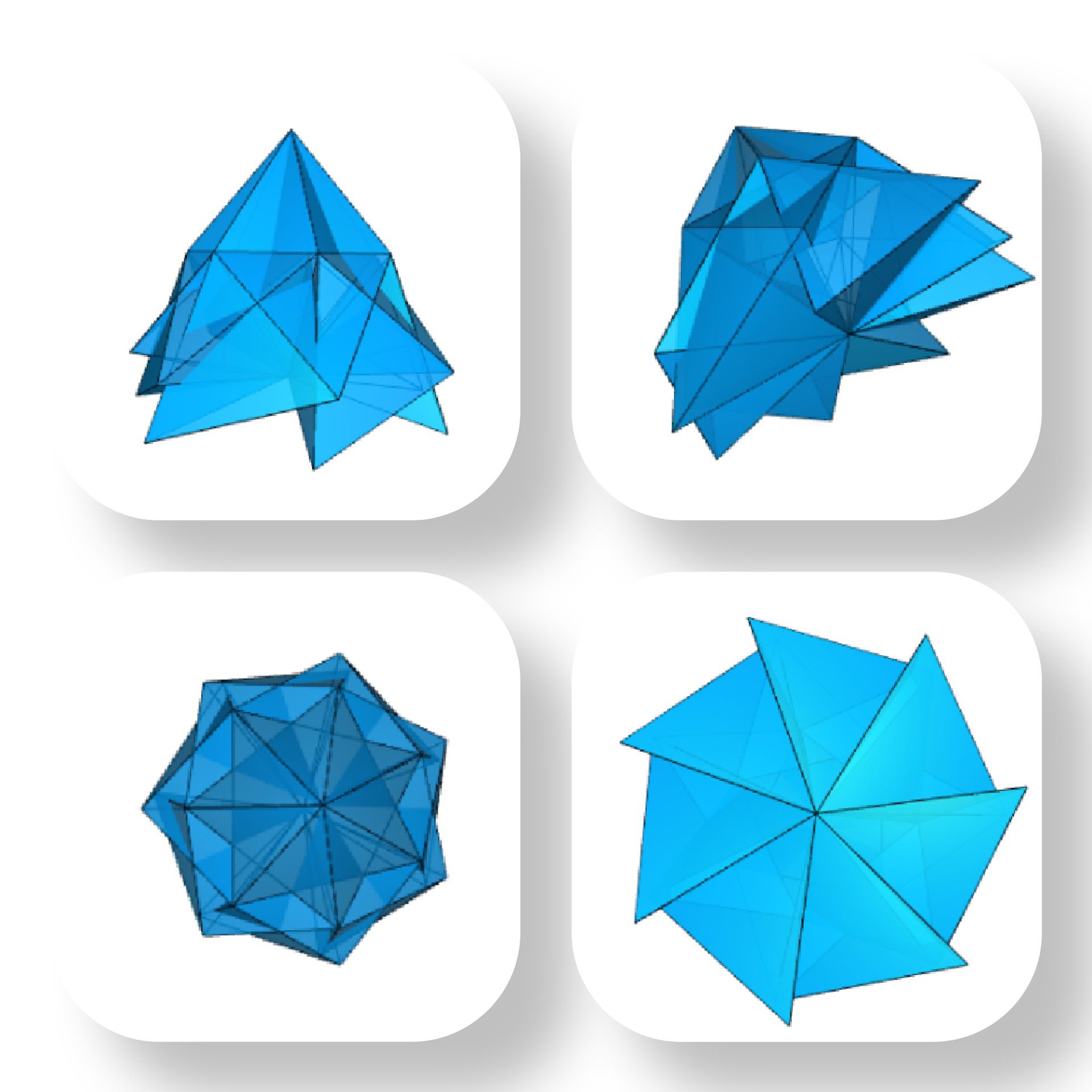}
\vspace{0.4cm}
\subcaption{}
\label{embc7}
\end{minipage}\caption{(a) Embeddings of the surfaces $X^{(4,0)}$ (left) and $X^{(5,0)}$ (right) (b) Two different embeddings of the surface $X^{(7,0)}$}
\end{figure}
Note, for $n\geq 3$ the above procedure yields 
\[
\vert \{l\mid 1\leq l \leq \frac{n}{2},\, \gcd(n,l)=1, \cos\left(\frac{2\pi l}{n}\right)\leq \frac{1}{2}\} \vert
\]
different embeddings of $X^{(n,k)}$ that cannot be transformed into each other by using rigid Euclidean motions only. Hence, the above procedure yields two different embeddings of the surface $X^{(7,0)}$ as polyhedra consisting of equilateral triangles, see Figure \ref{embc7}. In Figure \ref{embc44}, the construction of the family $(X^{(4,k)})_{k \in \mathbb{N}_0}$ is illustrated by showing the embeddings of the surfaces $X^{(4,1)},X^{(4,2)}$ and $X^{(4,3)}.$
\begin{figure}[H]
    \centering
    \includegraphics[height=4cm]{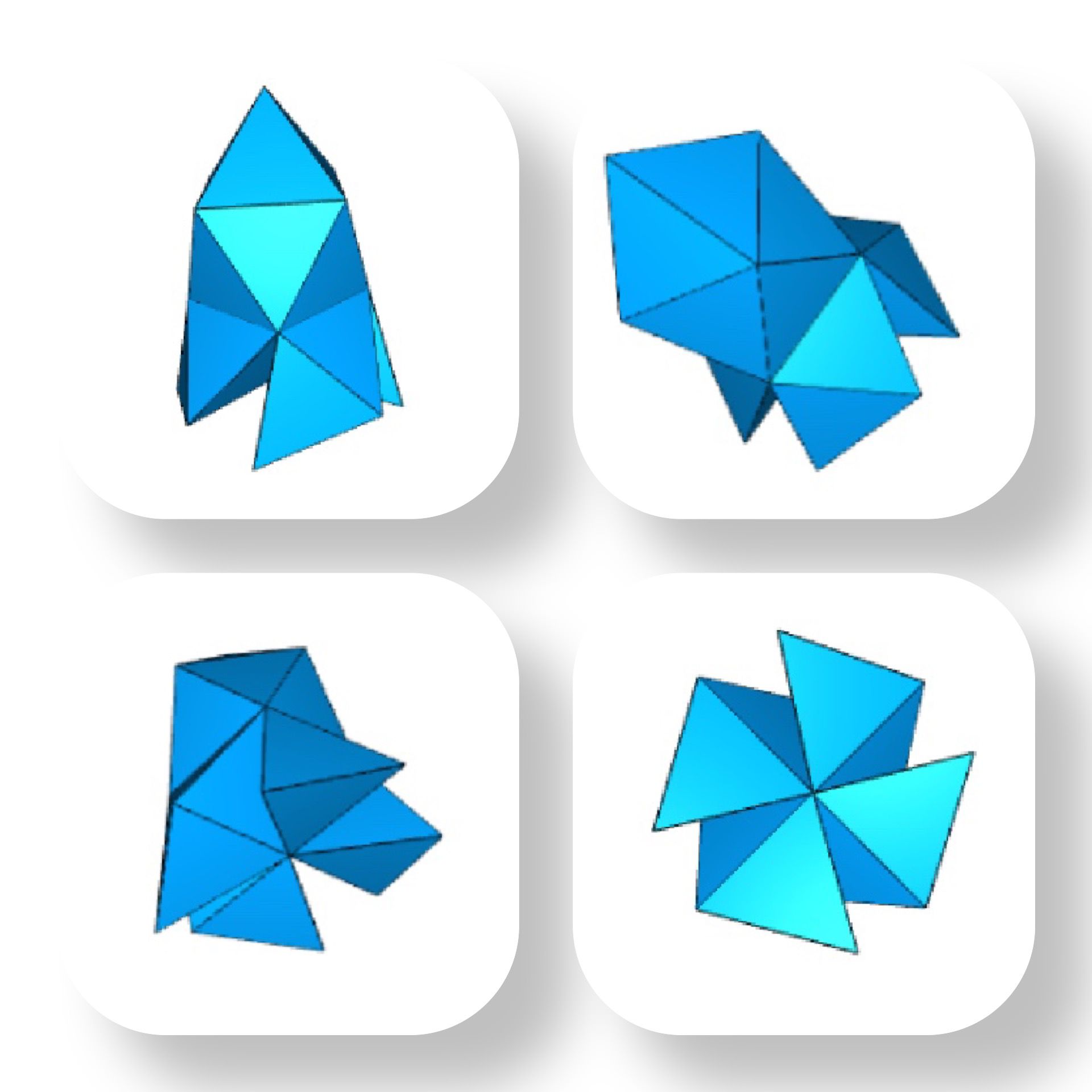}
    \includegraphics[height=4cm]{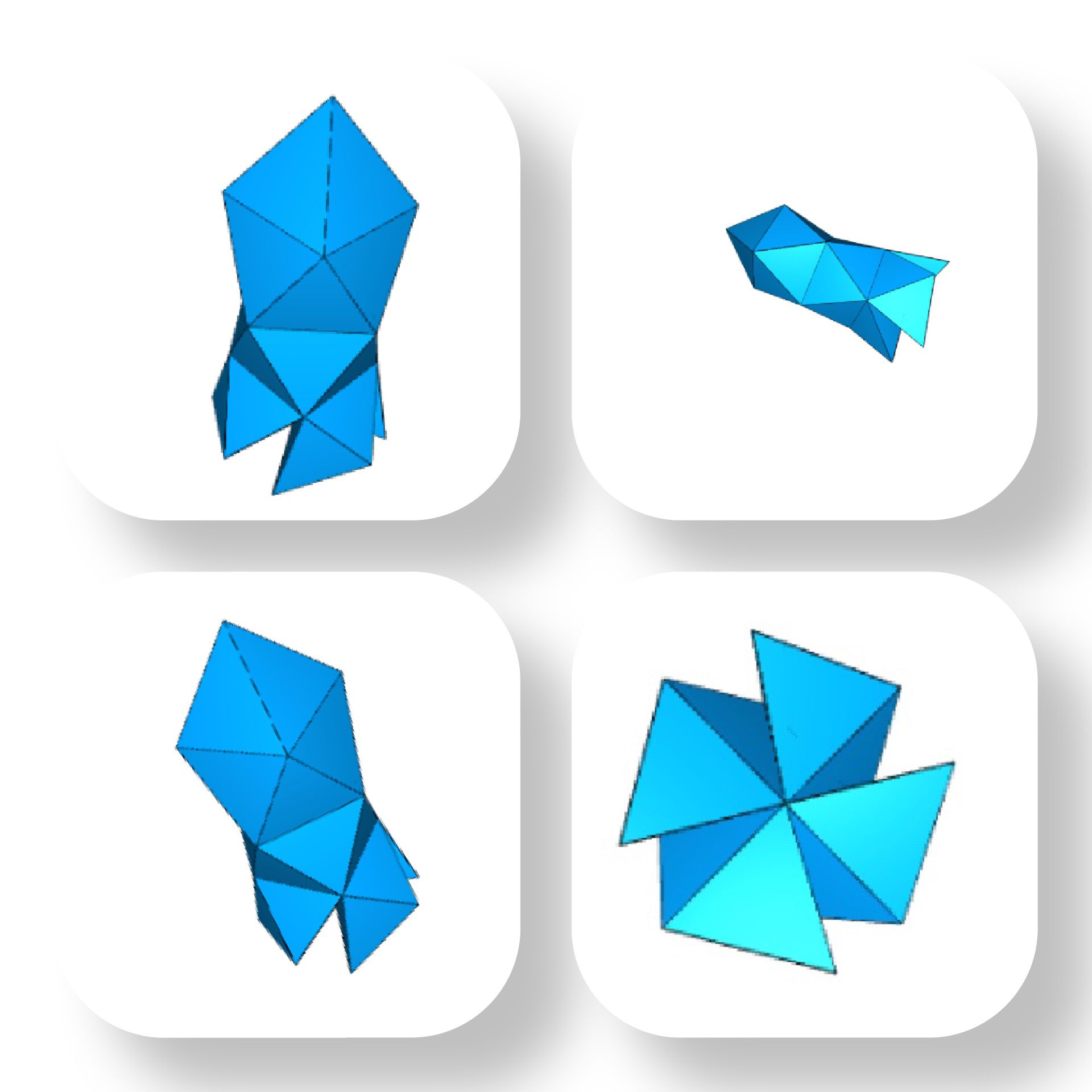}
    \includegraphics[height=4cm]{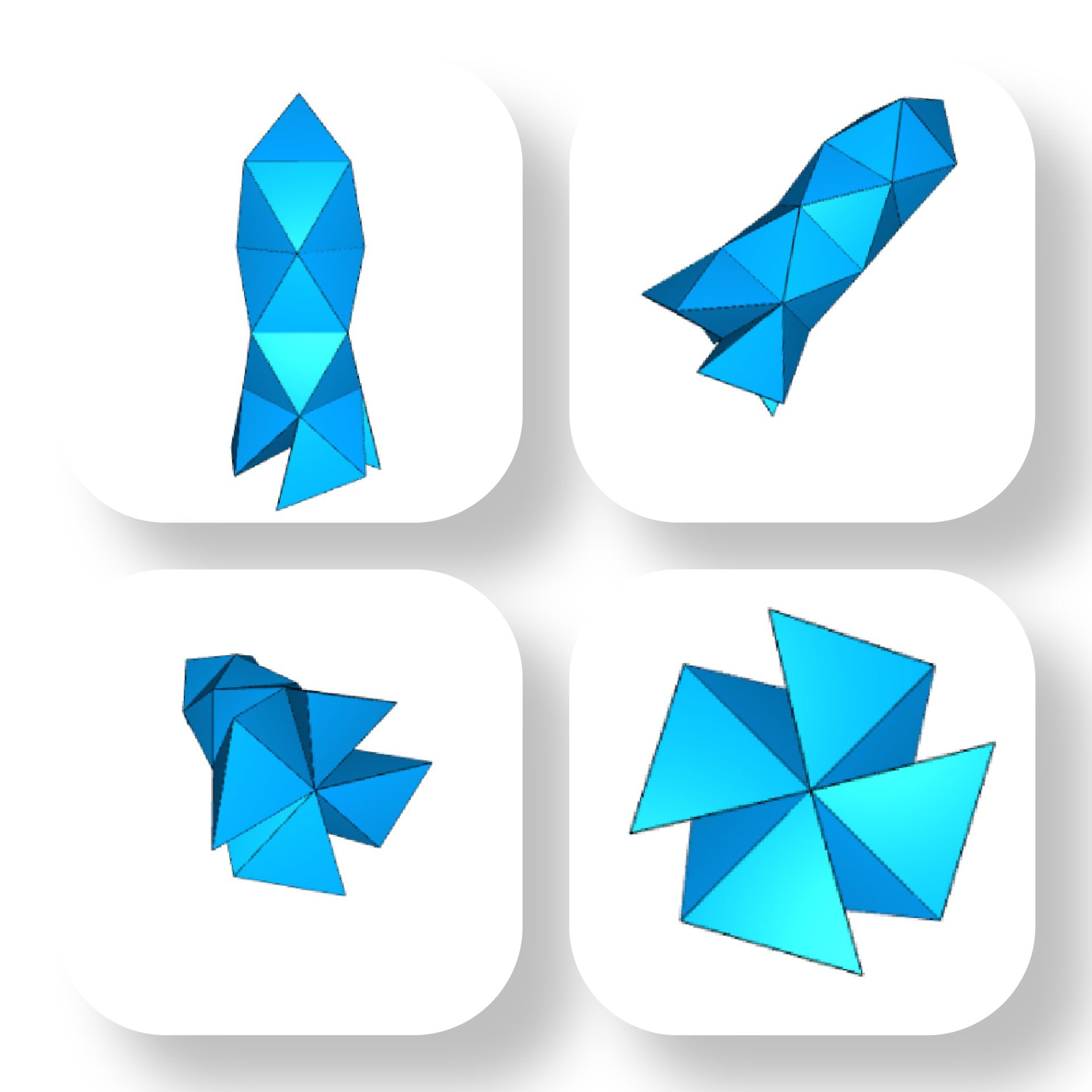}
    \caption{Embeddings of the surfaces $X^{(4,1)}$(left), $X^{(4,2)}$(middle) and $X^{(4,3)}$(right)}
    \label{embc44}
\end{figure}
Next, we analyse the automorphism group of $X^{(n,k)}$ further. We make use of the following remark to prove that the automorphism group of $X^{(n,k)}$ and the symmetry group of $\phi_l(X^{(n,k)})$ are both cyclic groups of order $n$. 
\begin{remark}\label{vertexfaithful}
    Let $X$ be a vertex faithful simplicial surface and $v_1,v_2,v_3,$ resp. $w_1,w_2,w_3$ be vertices of $X$ all incident to the same face. Then there exists at most one automorphism $\phi\in \Aut(X)$ with $\phi(v_i)=w_i.$
    So, a automorphism $\phi$ of a vertex faithful simplicial surface $X$ is uniquely identified by the images of the vertices of a face of $X.$
\end{remark}
\begin{prop}
Let $n,k,l$ be defined as in the above construction and $\phi_l$ be an embedding of the simplicial surface $X^{(n,k)}.$ Then  
\[
\Aut(\phi_l(X^{(n,k)})) \cong \Aut(X^{(n,k)}) \cong C_n. 
\]
\end{prop}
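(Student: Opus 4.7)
My plan is to prove the two isomorphisms simultaneously by exhibiting $C_n$ as a subgroup of each group and then bounding $\Aut(X^{(n,k)})$ above by $C_n$. Combined with the chain of inclusions $\Aut(\phi_l(X^{(n,k)})) \hookrightarrow \Aut(X^{(n,k)})$ recorded in Section~\ref{preliminiaries}, this forces all three groups to coincide with $C_n$.

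The two lower bounds are immediate from the construction. By definition the face set of $X^{(n,k)}$ is a union of $G$-orbits, so the generator $g$ of $G \cong C_n$ descends to a well-defined automorphism of the surface, yielding $C_n \le \Aut(X^{(n,k)})$. Similarly, the coordinate formulas in $\phi_l$ were designed so that $\phi_l(v^{g}) = M_\alpha\,\phi_l(v)$ for every vertex $v$, so the orthogonal matrix $M_\alpha$ is a rigid symmetry of the embedded polyhedron, giving $C_n \le \Aut(\phi_l(X^{(n,k)}))$.

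The main work is the reverse inclusion $\Aut(X^{(n,k)}) \le C_n$. I would first observe that the two apices $v_1 := (k+3)n+1$ and $v_2 := (k+3)n+2$ are combinatorially distinguished: a direct count shows $\deg(v_1) = n$ (this vertex lies only in the $n$ faces $f_{d_i}$) while $\deg(v_2) = 3n$ (this vertex lies in the three disjoint $G$-orbits of $f_{a_1}, f_{b_1}, f_{e_1}$, each of size $n$). Since $n \ne 3n$ for every $n \ge 3$, any $\pi \in \Aut(X^{(n,k)})$ must fix both $v_1$ and $v_2$. The umbrella of $v_1$ is the cyclic sequence $(f_{d_1}, \ldots, f_{d_n})$, as adjacent pairs share the edge $\{i{+}1, v_1\}$; vertex-faithfulness together with Remark~\ref{vertexfaithful} then imply that $\pi$ is uniquely determined by its restriction to this umbrella, leaving at most $2n$ candidates—the $n$ cyclic rotations (already realised by $G$) and the $n$ reflections, which must be excluded.

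Ruling out the reflection case is the step I expect to be the main obstacle. My plan is to propagate a hypothetical reflection $\pi$ downward through the rings by tracking its induced action on the strip faces $f_{1,j}, \overline{f_{1,j}}$ and on the connector faces $f_{c_i}, f_{f_i}, f_{e_i}$ near $v_2$. While $f_{a_i}$ and $f_{b_i}$ are index-symmetric, the faces $f_{e_i}$ incorporate a shift by one in the ring indices; propagating the reflection through this shift forces its fixed axis to rotate between consecutive rings, and the resulting map on the umbrella of $v_2$ becomes inconsistent with the $G$-equivariant cyclic structure of the $3n$ faces at $v_2$, producing a contradiction. For the base case $k = 0$ a lighter argument is available since $\mathcal{F}(X^{(n,0)}) \cong \Gamma_{C_n}$, and Frucht's theorem already gives $\Aut(\Gamma_{C_n}) \cong C_n$; the case $k \ge 1$ then reduces to verifying that attaching the $C_n$-symmetric strips cannot introduce new reflective symmetries of the asymmetric bottom pyramid.
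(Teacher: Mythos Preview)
Your overall architecture---$C_n$ sits inside both groups, and the embedding--to--surface inclusion squeezes everything once $\Aut(X^{(n,k)})\le C_n$---is exactly the paper's. The divergence is in how you bound $\Aut(X^{(n,k)})$ above.

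You anchor at the apex $v_1=(k{+}3)n{+}1$ of degree $n$, obtain a dihedral $2n$ candidates from its umbrella, and then propose to kill the reflections by a propagation argument. Two problems. First, the step ``$n\ne 3n$, hence $\pi$ fixes $v_1$'' is a non sequitur: you need $v_1$ to be the \emph{unique} vertex of its degree, and that fails whenever $n$ collides with another degree class. For $k=0$ the other degrees are $3,4,7,3n$, so $n\in\{3,4,7\}$ are already exceptions; for $k\ge 1$ the strip introduces degrees $5,6,8$ as well, creating further collisions. Second, and more seriously, the reflection-exclusion is not a proof but a plan: ``the resulting map on the umbrella of $v_2$ becomes inconsistent'' is exactly the content that needs to be written down, and you acknowledge it as the main obstacle.

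The paper sidesteps both issues by anchoring at the \emph{other} apex. The vertex $(k{+}3)n{+}2$ is genuinely the unique vertex of degree $3n$, hence fixed. Now look at the single face $f_{a_1}=\{(k{+}1)n{+}1,\,(k{+}2)n{+}1,\,(k{+}3)n{+}2\}$: its three vertices have degrees $4$, $3$, and $3n$ respectively. Any automorphism must send $f_{a_1}$ to a face at $(k{+}3)n{+}2$ carrying the same degree triple, and among the $3n$ faces $f_{a_i},f_{b_i},f_{e_i}$ there only the $n$ faces $f_{a_i}$ qualify. Remark~\ref{vertexfaithful} then pins $\pi$ down uniquely, giving $|\Aut(X^{(n,k)})|\le n$ with no reflection case to eliminate. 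The moral: choose a face whose three vertex degrees are pairwise distinct, and the dihedral ambiguity never arises.
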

\begin{proof}
By construction $\langle M_\alpha \rangle$ is a subgroup of the symmetry group $\Aut(\phi_l(X^{(n,k))}).$ Since $\langle M_\alpha \rangle \cong C_n,$ it suffices to show that 
the automorphism group of $X^{(n,k)}$ is also cyclic of order $n.$
Let therefore $\phi$ be an automorphism of $X^{(n,k)}$ and $(k+2)n+1,\ldots,(k+3)n$ the vertices of degree 3 in $X^{(n,k)}.$ Note, that there is exactly one vertex of degree $3n,$ namely $(k+3)n+2$ and that the image of a vertex under $\phi$ has to be a vertex with the same vertex degree. Thus, we conclude: 
\begin{itemize}
    \item $\phi((k+2)n+1)=(k+2)n+i$ for $i \in \{1,\ldots,n\},$
    \item $\phi((k+3)n+1)=(k+3)n+1.$
\end{itemize}
Since $\{(k+1)n+1,(k+2)n+1,(k+3)n+1\}$ is a face of $X^{(n,k)},$ the same holds for its images under $\phi.$ So, $\phi((k+1)n+1)$ has to be a vertex of degree 4, such that $\{\phi((k+1)n+1),(k+2)n+i,(k+3)n+1\}$ forms a face of the simplicial surface. This leads to $\phi ((k+1)n+1)=(k+1)n+i.$
With remark \ref{vertexfaithful}, the restriction of $\phi$ to the vertices of the surface is given by 
$((1,\ldots,n)\ldots((k+2)n+1,\ldots,(k+3)n))^i.$
Thus, we obtain
$C_n \cong \langle M_\alpha \rangle \leq\Aut(\phi_l(X^{(n,k)}))\hookrightarrow \Aut(X^{(n,k)})\cong C_n.$
\end{proof}
\subsection{Construction of dihedral family}
In this subsection, we present the detailed construction of the simplicial surfaces $Y^{(n,k)}$. We compute the incidence structure of the surfaces and the corresponding embeddings consisting of equilateral triangles, as in the cyclic case. Let therefore $n,k,l,\alpha,\rho,h,M_{\alpha}$ be defined as in subsection \ref{cyclic_embedding} and $\Gamma_{D_n}$ the cubic graph constructed in \ref{cyclic_dihedral_graph} with dihedral automorphism group. Furthermore, we prove that the surface $Y^{(n,k)}$ and the constructed embeddings have a underlying dihedral symmetry.

As described in the above construction, we define the set of faces of the simplicial surface $Y^{(n,k)}$ by 
\[
\{ f_{a_i},f_{b_i},f_{c_i},f_{d_i}\mid i=1,\ldots,n\}\cup \{ f_{i,j}, \overline{f_{i,j}}\mid i=1,\ldots,n,j=1,\ldots,k\}.
\]
Moreover, let the vertices of the faces  $f_{a_1},\ldots,f_{d_1}$ be defined by
\begin{align*}
    &f_{a_1}=\{kn+1,(k+1)n+1,(k+2)n+2\},f_{b_1}=\{kn+2,(k+1)n+1,(kn+2)+2\},\\
    &f_{c_1}=\{kn+1,kn+2,(k+1)n+1\},f_{d_1}=\{1,2,(k+2)n+1\}.
\end{align*}
and for $0< j \leq k$ the vertices of the faces $f_{1,j}$ and $\overline{f_{1,j}}$ be given by 
\[
f_{1,j}=\{(j-1)n+1,(j-1)n+2,jn+1\},\overline{f_{1,j}}=\{(j-1)n+2,jn+1,jn+2\}.
\]
Here, the permutation $h=(1,\ldots,n)\ldots ((k+1)n+1,\ldots,(k+2)n)$ gives rise to the vertices of the remaining faces of $Y^{(n,k)}.$
More precisely, the vertices of the faces $f_{a_{i+1}},\ldots f_{d_{i+1}},$ are given by ${(f_{a_1})}^{h^i},\ldots,{(f_{h_1})}^{h^i}$ respectively.  
If $k>0,$ we define the vertices of the faces $f_{i+1,j}$ resp. $\overline{f_{i+1,j}}$ by ${({f_{1,j}})}^{h^i}$ resp. ${(\overline{{f_{1,j}}})}^{h^i}.$   

For example, for $n=4$ and $k=0$, we obtain the following vertices of faces:
\begin{align*}
\{&\{1,5,10\},\{2,6,10\},\{3,7,10\},\{4,8,10\},\{2,5,10\},\{3,6,10\},\{4,7,10\},\{1,8,10\},\\
&\{1,2,5\},\{2,3,6\},\{3,4,7\},\{1,4,8\},\{1,2,9\},\{2,3,9 \},\{3,4,9 \}  \{ 1,4,9\}\}.
\end{align*}
\begin{remark}
The simplicial surface $Y^{(n,k)}$ satisfies the following properties: 
    \begin{itemize}
        \item The surface $X^{(n,k)}$ has Euler-Characteristic 2.
\item The face graph of the simplicial surface $Y^{(n,0)}$ is isomorphic to $G_{D_n}$.
 \end{itemize}
\end{remark}
Next, we embed the simplicial surface $Y^{(n,k)}$ into $\mathbb{R}^3$ by assigning 3D-coordinates to the vertices of the simplicial surface as follows:

Let $n,k,l,\alpha,\rho,h,M_\alpha $ be defined as in the construction of the cyclic embeddings. 
If $\cos(\alpha)\leq \frac{1}{2},$ then an embedding $\phi$ of the simplicial surface $Y^{(n,k)}$ is given as follows:
For $0<i\leq k $ we define the following images:
\begin{align*}
&    \phi(in+1)=-ih \left(
    \begin{array}{ccc}
         0&0  &1
    \end{array}\right)^t+\left(
    \begin{array}{ccc}
         \cos(i\frac{\alpha}{2})&\sin(i\frac{\alpha}{2})  &0
    \end{array}\right)^t\\
    &    \phi(in+2)=-ih \left(
    \begin{array}{ccc}
         0&0  &1
    \end{array}\right)^t+\left(
    \begin{array}{ccc}
         \cos((i+1)\frac{\alpha}{2})&\sin((i+1)\frac{\alpha}{2})  &0
    \end{array}\right)^t.
\end{align*}
Furthermore, the images of the other vertices are given by
\begin{align*}
    \phi((k+2)n+1)=&\rho \left(
    \begin{array}{ccc}
         0&0  &1
    \end{array}
\right)^t,
    \phi((k+2)n+2)=-(kh+\rho) \left(
    \begin{array}{ccc}
         0&0  &1
    \end{array}
\right)^t\\
\phi((k+1)n+1)=&\frac{1}{3}(\phi(kn+1)+\phi(kn+2)+\phi((k+3)n+2))\\
&+\frac{\phi((k+3)n+2)-\phi(kn+1))\times \phi(kn+2)-\phi((k+3)n+2))}{\|\phi((k+3)n+2)-\phi(kn+1))\times \phi(kn+2)-\phi((k+3)n+2))\|}.
\end{align*}
Hence, we can define the images of the other vertices by
\begin{align*}
& \phi ((in+j))=\phi ((in+1)^{g^j})={M_\alpha}^j \phi(in+1)
\end{align*}
where $i\in \{1,\ldots,k\}, j\in \{1,\ldots,n\}.$ Since the euclidean norm is invariant under the multiplication with orthogonal matrices, it suffices to show the edges of the constructed surfaces that are incident to the faces $f_{a_1},f_{b_1},f_{c_1},f_{d_1},f_1^i,f_2^i$ all have length 1. Since these edges have length 1 by construction, the above construction gives rise to an embedding of $Y^{(n,k)}$ constructed from congruent triangles.
For simplicity, we denote the embedding of the simplicial surface $Y^{(n,k)}_l$ that can be constructed by using the above procedure by $\psi_l^{(n,k)}.$ 
Figure \ref{embd45} shows the corresponding embedding for $n=4$ and $n=5.$
\begin{figure}[H]
\begin{minipage}{.5\textwidth}
    \centering
    \includegraphics[height=4cm]{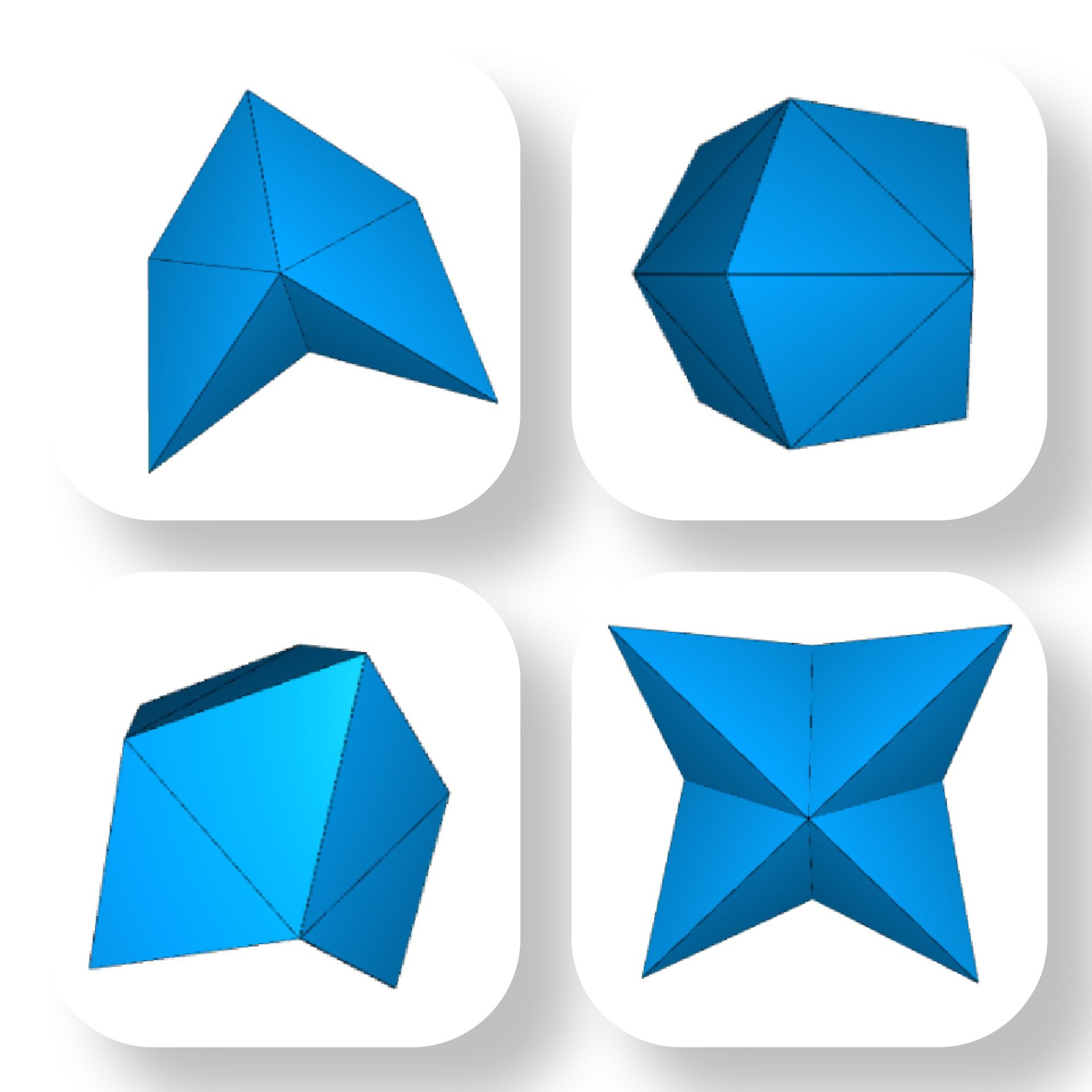}
    \includegraphics[height=4cm]{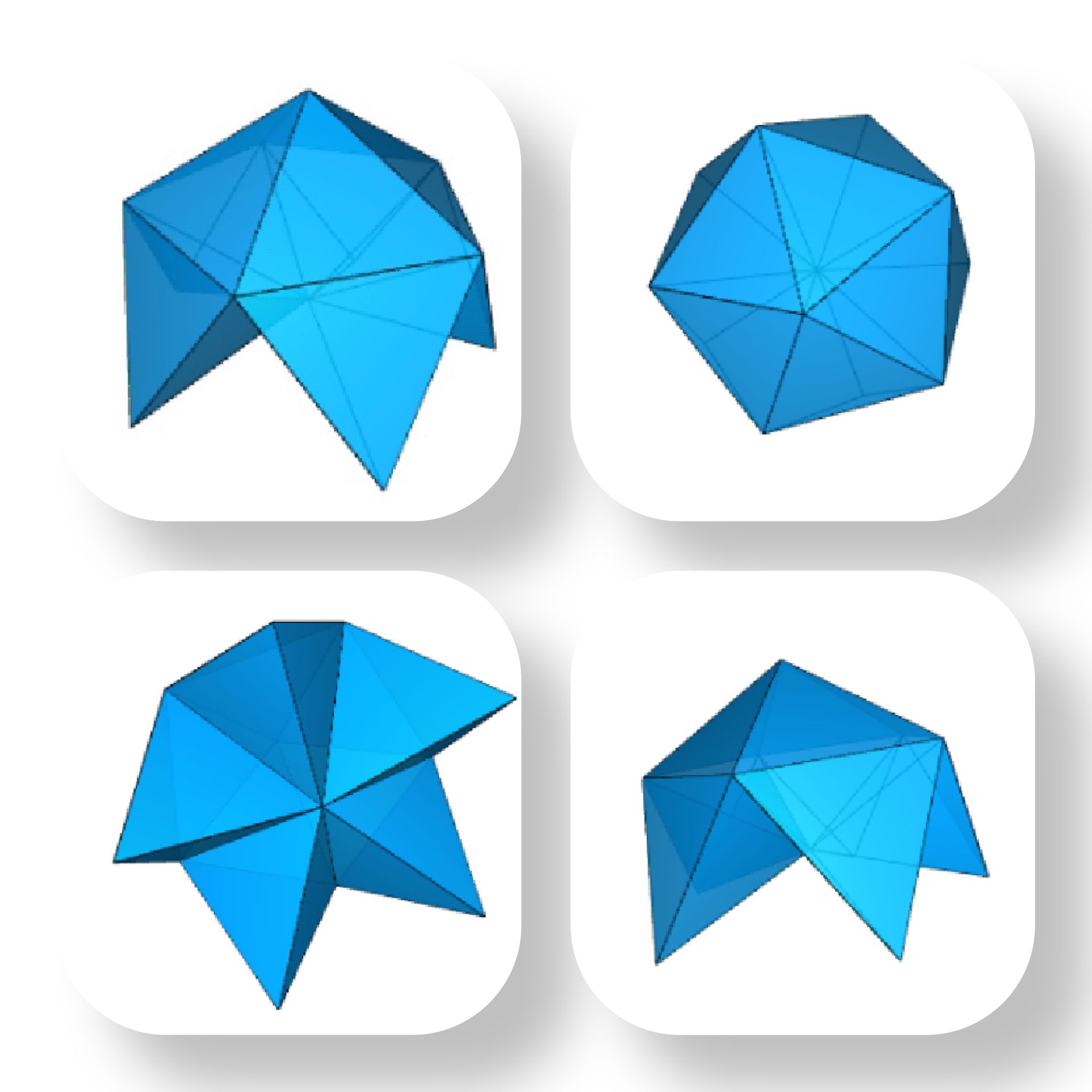}
\vspace{0.4cm}
\subcaption{}
\label{embd45}
\end{minipage}
\begin{minipage}{.5\textwidth}
    \centering
    \includegraphics[height=4cm]{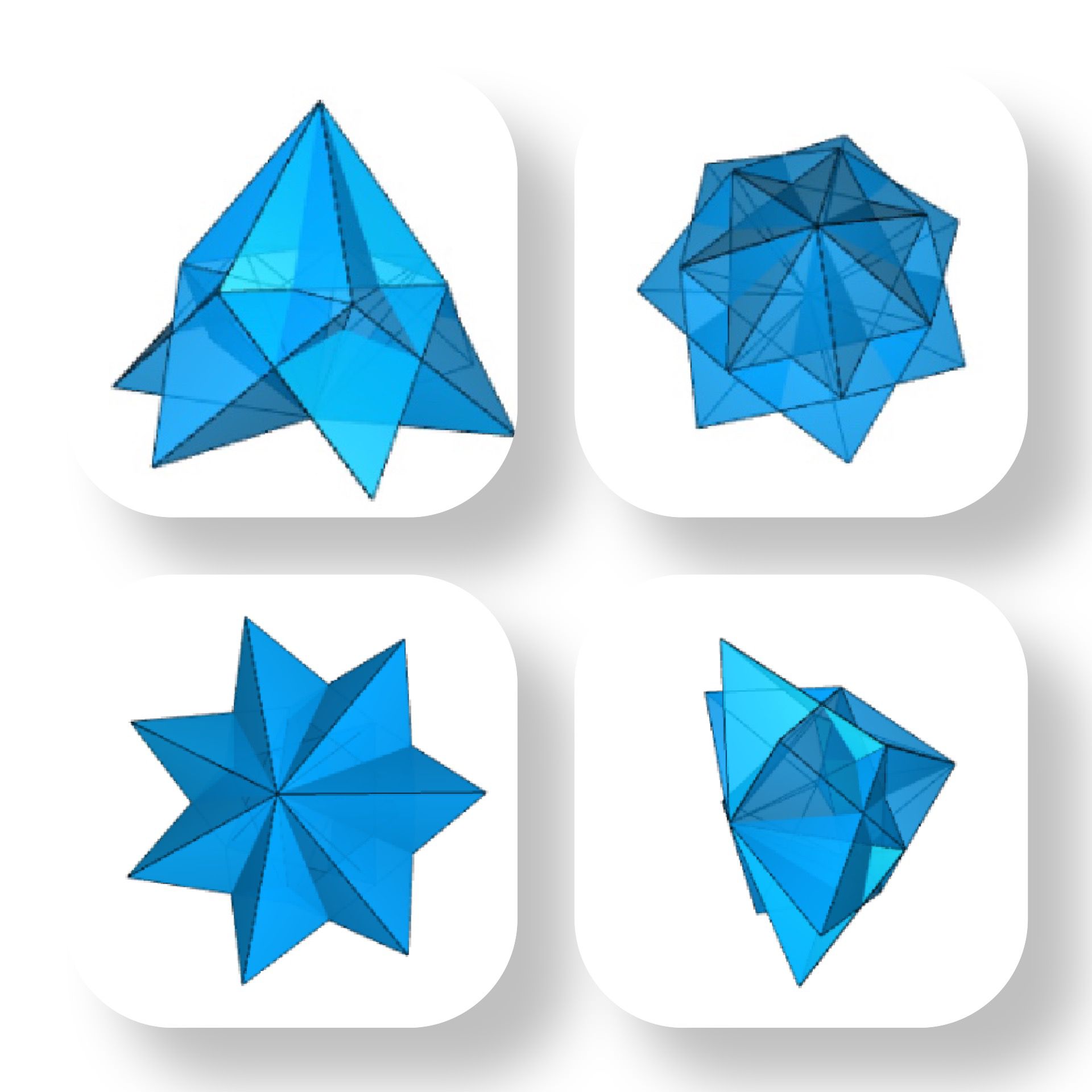}
    \includegraphics[height=4cm]{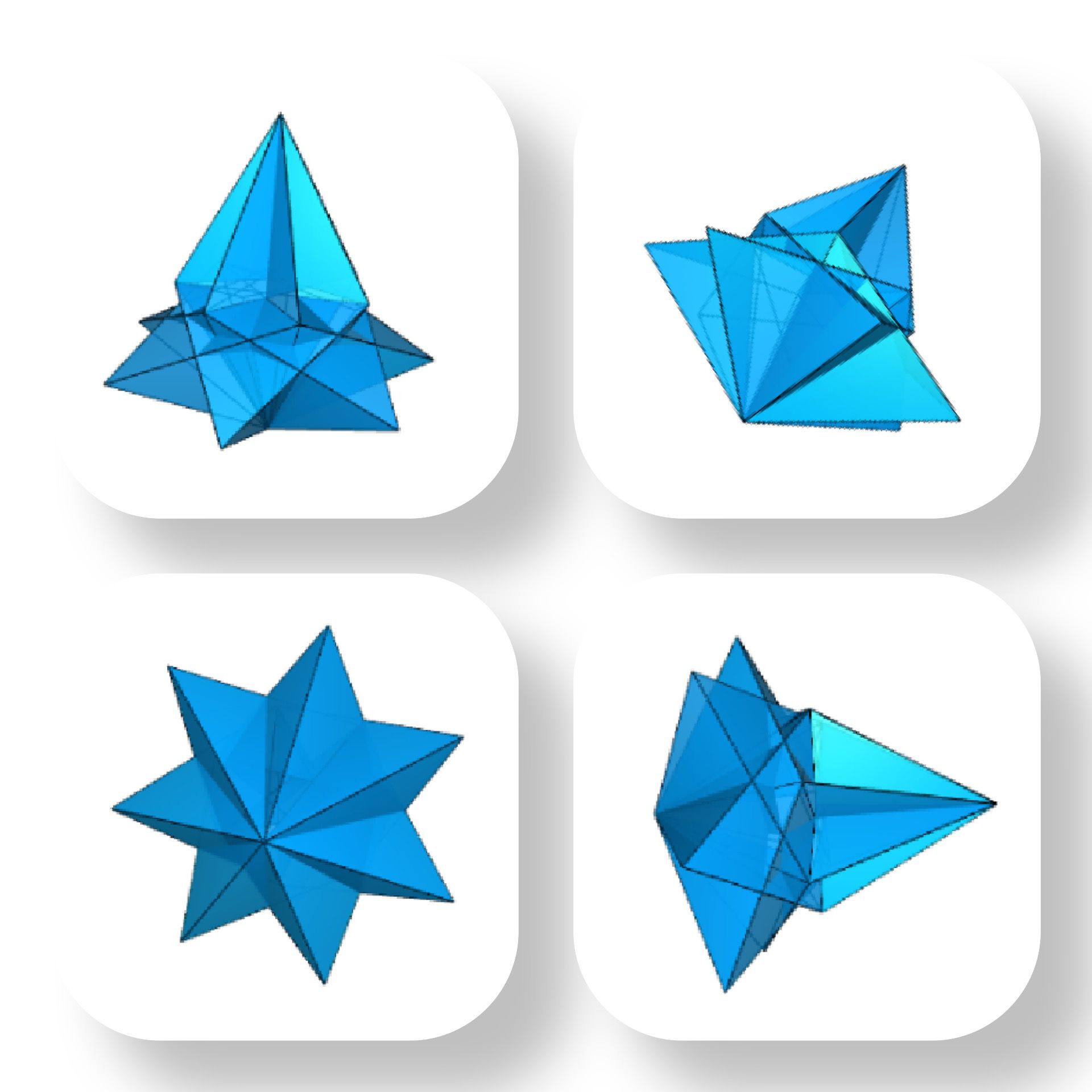}
\vspace{0.4cm}
\subcaption{}
\label{embd7}
\end{minipage}\caption{(a) Embeddings of the surfaces $Y^{(4,0)}$ (left) and $Y^{(5,0)}$ (right) (b) Two different embeddings of the surface $Y^{(7,0)}$}
\end{figure}
Note, as presented in the cyclic case the above construction gives rise 
\[
\vert \{k\mid 1\leq k \leq \frac{n}{2},\, gcd(n,k)=1, \cos(\frac{2\pi k}{n})\leq n\} \vert
\]
different embeddings of $Y^{(n,k)}$ that can not be transformed into each other by using rigid Euclidean motions. So for example, the above construction yields two different embeddings of the surface $Y^{(7,0)},$ see Figure \ref{embd7}. Figure \ref{embd44}, shows two members of the family $(X^{(4,k)})_{k \in \mathbb{N}_0}.$
\begin{figure}[H]
    \centering
    \includegraphics[height=4cm]{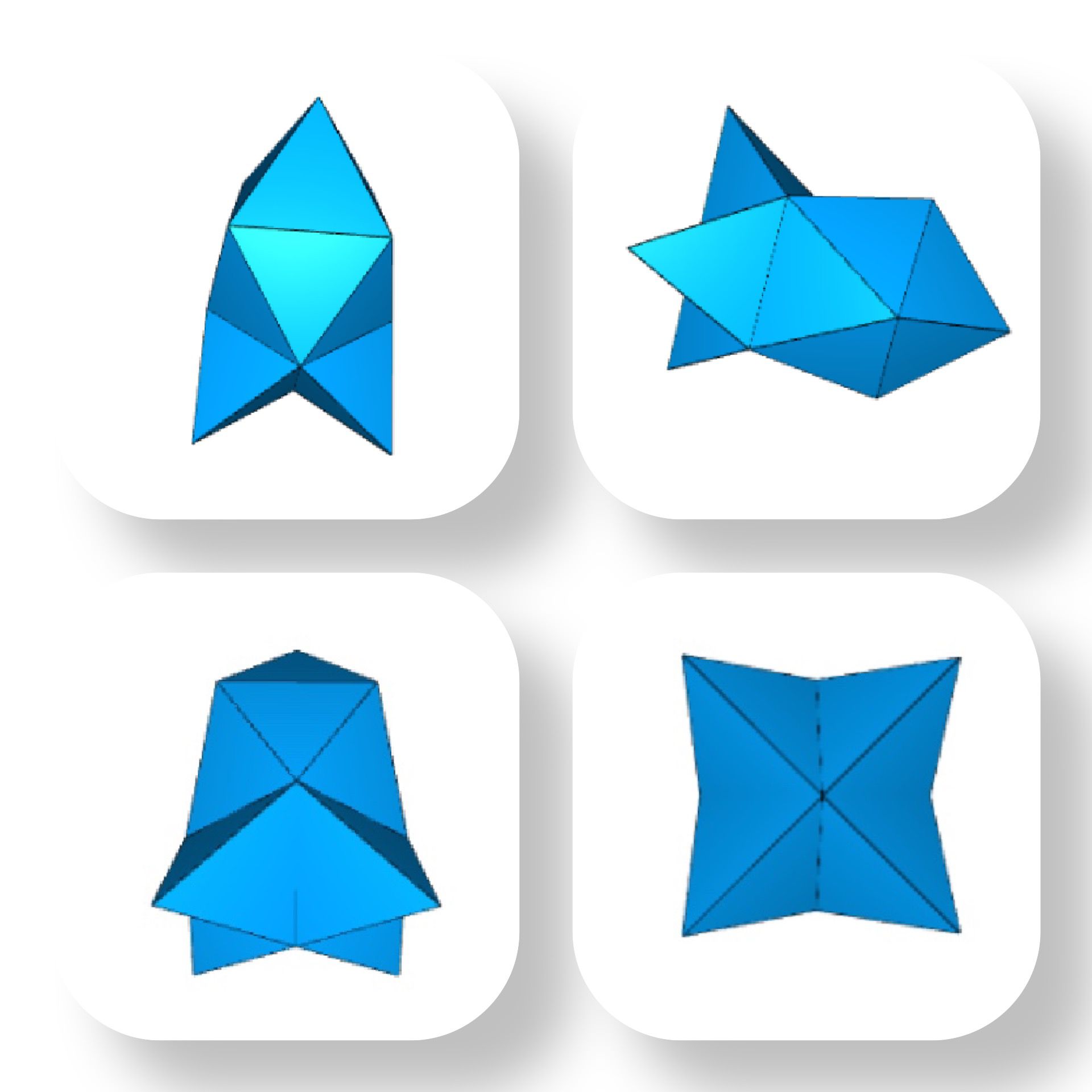}
    \includegraphics[height=4cm]{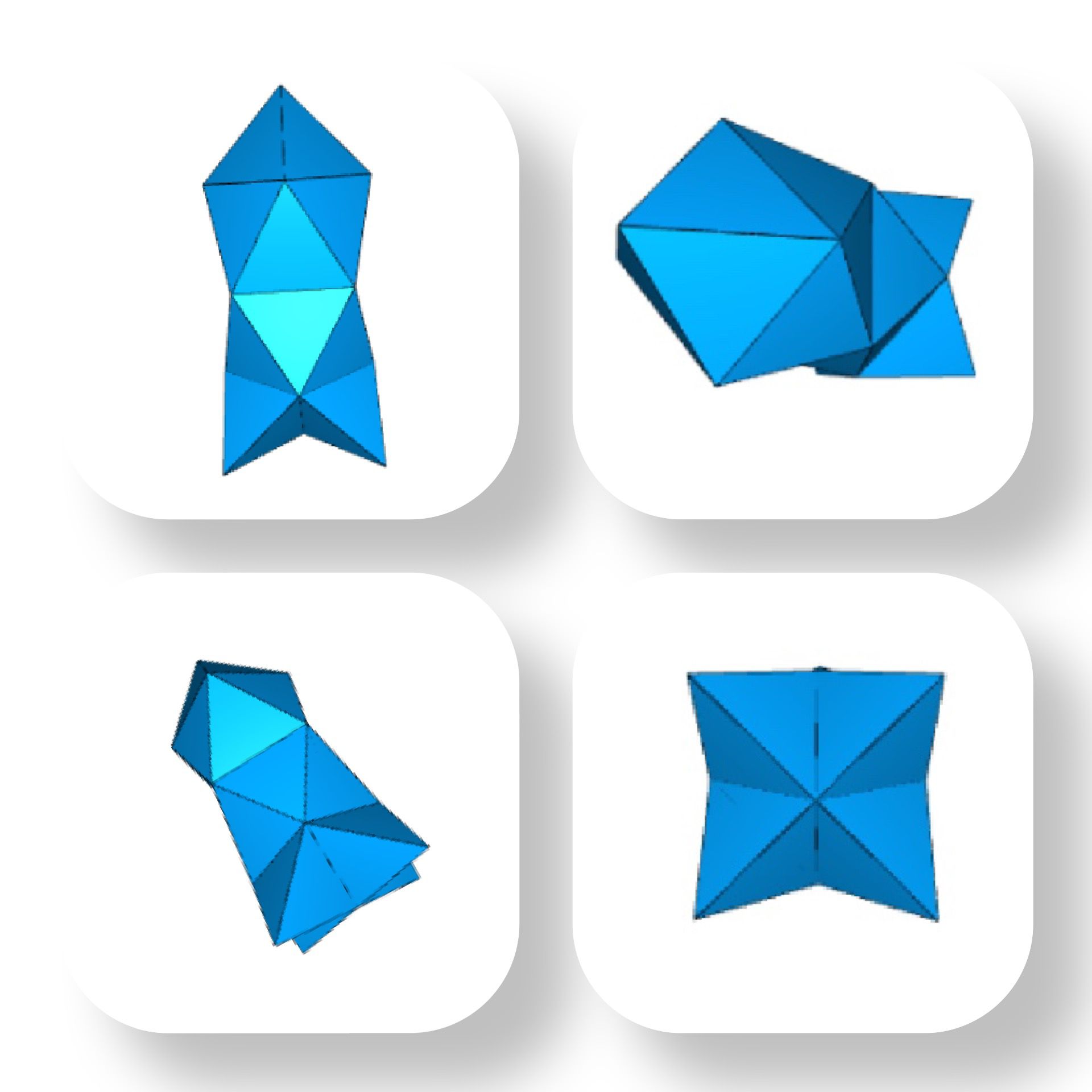}
    \includegraphics[height=4cm]{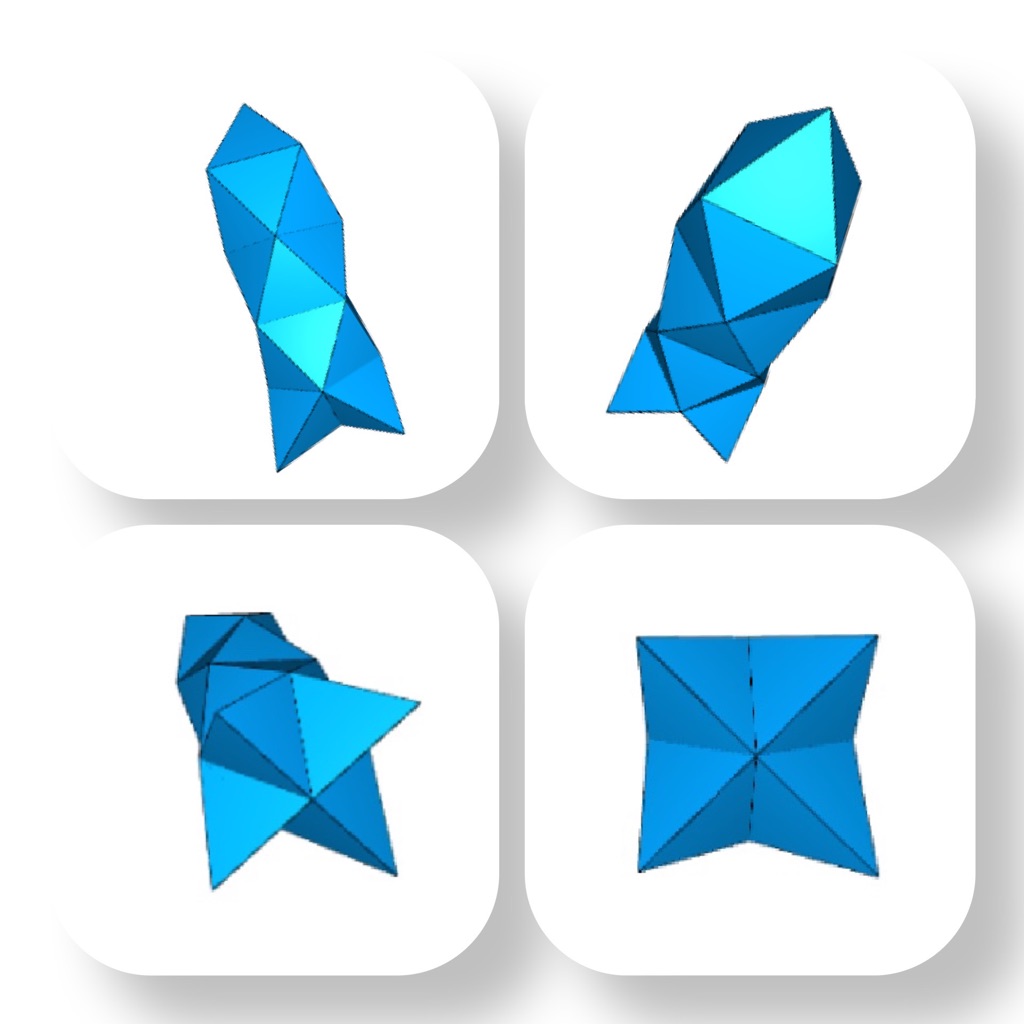}
    \caption{Embeddings of the surfaces $Y^{(4,1)}$(left),$Y^{(4,2)}$(middle) and $Y^{(4,3)}$(right)}
    \label{embd44}
\end{figure}
Next, we show that the surfaces and the embeddings have dihedral automorphism groups.
\begin{prop}
Let $n,k,l$ be deined as in the above construction and let $\psi_l^{(n,k)}$ be the corresponding embedding of the simplicial surface $Y^{(n,k)}.$ Then  
\[
\Aut(\psi_l(Y^{(n,k)})) \cong \Aut(\psi^{(n,k)}) \cong D_n. 
\]
\end{prop}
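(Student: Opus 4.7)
The plan is to mirror the proof of the preceding cyclic proposition, replacing the single rotational generator by a rotation together with a reflection. Concretely, I aim to establish the chain
\[
D_n \leq \Aut(\psi_l(Y^{(n,k)})) \hookrightarrow \Aut(Y^{(n,k)}) \leq D_n,
\]
so equality holds throughout. The middle inclusion is immediate from the general embedding $\Aut(\phi(X)) \hookrightarrow \Aut(X)$ recorded in Section~2, so only the leftmost and rightmost inequalities require work.

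For the lower bound, by construction $M_\alpha$ is a rotational symmetry of $\psi_l(Y^{(n,k)})$ of order $n$. To produce the missing involution I note that the two apex vertices $(k+2)n+1$ and $(k+2)n+2$ are placed on the $z$-axis and that every remaining vertex lies on a horizontal circle about this axis, with angular coordinate $j\alpha/2$ for $jn+1$ and $(j+1)\alpha/2$ for $jn+2$. Let $\sigma \in \mathrm{O}(3)$ be the reflection across the vertical plane through the $z$-axis making angle $\alpha/4$ with the positive $x$-axis; then $\sigma$ fixes both apices and, on each level $j \in \{0,\dots,k+1\}$, induces the permutation swapping $jn+1 \leftrightarrow jn+2$ and, more generally, $jn+(1-i) \leftrightarrow jn+(2+i)$ indexed modulo $n$. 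A direct check from the defining formulas gives $\sigma(f_{a_1}) = f_{b_1}$, $\sigma(f_{c_1}) = f_{c_1}$, $\sigma(f_{d_1}) = f_{d_n}$, $\sigma(f_{1,j}) = \overline{f_{n,j}}$, and similarly on the $G$-orbits of these faces. Thus $\langle M_\alpha, \sigma \rangle \cong D_n$ sits inside $\Aut(\psi_l(Y^{(n,k)}))$.

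For the upper bound I would classify vertices by their umbrella size. A direct count gives $\deg((k+2)n+2) = 2n$ (coming from the faces $f_{a_i}, f_{b_i}$), $\deg((k+2)n+1) = n$ (from the $f_{d_i}$) and $\deg((k+1)n+i) = 3$ for each $i$. For $n \geq 4$ the two apex degrees are distinct from each other and from $3$; in the borderline case $n=6$, where $\deg((k+2)n+1)$ coincides with the degree of a level-$0$ vertex, the two are still distinguished by the adjacency criterion that $(k+2)n+1$ has no neighbour of degree $2n$ while a level-$0$ vertex does. Hence every $\pi \in \Aut(Y^{(n,k)})$ fixes both apices. It therefore permutes the $n$ degree-$3$ vertices $\{(k+1)n+1,\dots,(k+1)n+n\}$, giving $n$ choices for $\pi((k+1)n+1)$. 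Given this, the face $f_{a_1} = \{kn+1, (k+1)n+1, (k+2)n+2\}$ must map to a face through $(k+2)n+2$ and $\pi((k+1)n+1)$, of which exactly two exist (one $f_{a_\cdot}$ and one $f_{b_\cdot}$). By Remark~\ref{vertexfaithful}, the images of the three vertices of a single face determine $\pi$ completely, so $|\Aut(Y^{(n,k)})| \leq 2n = |D_n|$.

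The main obstacle is the bookkeeping in the first step: one must write the vertex permutation induced by $\sigma$ precisely and then verify, for each face type in the construction, that its $\sigma$-image is again a face of $Y^{(n,k)}$, including the interaction with the cyclic relabelling produced by $h$. The upper-bound count is routine once the distinguishing degrees are on the table; the potential pitfall there is the degenerate case $n=6$, which is handled by the adjacency argument above rather than by degree alone.
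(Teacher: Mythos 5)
Your overall architecture coincides with the paper's: exhibit $\langle M_\alpha,\ \text{reflection}\rangle\cong D_n$ inside $\Aut(\psi_l(Y^{(n,k)}))$, pass through the general embedding $\Aut(\psi_l(Y^{(n,k)}))\hookrightarrow\Aut(Y^{(n,k)})$, and cap $|\Aut(Y^{(n,k)})|$ at $2n$ by a vertex-degree count plus vertex-faithfulness. Your upper-bound half is essentially the paper's argument: the paper tracks the image of the face $f_{c_1}=\{kn+1,kn+2,(k+1)n+1\}$ and reads off two choices for $\phi(kn+1)$ once $\phi((k+1)n+1)=(k+1)n+i$ is fixed, where you track $f_{a_1}$ through the bottom apex instead; both yield at most $n\cdot 2$ automorphisms via Remark \ref{vertexfaithful}, and your extra care with the apex degrees and the borderline case $n=6$ is harmless.

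The genuine gap is in the lower bound: the reflection you propose is not a symmetry of the embedding. Consecutive vertex rings are rotated against each other by $\alpha/2$ (the level-$j$ ring is the $\langle M_\alpha\rangle$-orbit of a point at angular coordinate $j\alpha/2$; this offset is precisely what makes the strip triangles equilateral), so the vertical mirror that swaps $jn+1\leftrightarrow jn+2$ on level $j$ must sit at angle $(2j+1)\alpha/4$, which depends on $j$. Hence no single plane realizes the level-uniform permutation you describe; concretely, for $n=4$, $l=1$, $k=0$ your mirror at angle $\alpha/4=\pi/8$ sends the ring vertex at angle $0$ to the point at angle $\pi/4$, which is not a vertex of the embedding. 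The paper instead takes $S$ to be the reflection $(x,y,z)\mapsto(x,-y,z)$, i.e.\ the mirror through the axis and the vertex $1$; this does preserve every ring, sending the level-$j$ vertex at angle $j\alpha/2+i\alpha$ to the one at $j\alpha/2+(-j-i)\alpha$, at the price of inducing a level-dependent permutation rather than the uniform swap you wrote down. The error is local and repairable---any one correct reflection suffices, since together with $M_\alpha$ it generates $D_n$---but as stated your $\sigma$ does not exist, and the face computations $\sigma(f_{a_1})=f_{b_1}$, $\sigma(f_{1,j})=\overline{f_{n,j}}$, etc.\ that are built on it remain unverified.
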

\begin{proof}
Let $\alpha$ be defined by $\frac{2\pi l}{n}.$ Furthermore let $S$ be defined by 
\[\
\left(\begin{tabular}{ccc}
    1&0&0  \\
    0&-1&0 \\
    0&0&1 \\ 
\end{tabular}\right).
\]
Then $S$ defines a symmetry of the embedding of $Y^{(n,k)}$ with $S^2=I_3.$ 
Thus, $\langle M_\alpha ,S\rangle$ is a subgroup of the symmetry group $\Aut(\psi_l(X^{(n,k))}).$ Since $\langle M_\alpha ,S\rangle \cong D_n,$ it suffices to show that 
the automorphism group of $Y^{(n,k)}$ is also dihedral of order $2n.$
Let $\phi$ be an automorphism of $Y^{(n,k)}$ and $(k+1)n+1,\ldots,(k+2)n$ the vertices of degree 3 in the simplical surface.  Since vertices that are mapped onto each other under an automorphism of the surface must have the same vertex degree, $\phi$ permutes the vertices of degree 3, i.e.\ there exists an $1\leq i \leq n$ with $\phi((k+1)n+1)=(k+1)n+i.$
Furthermore, the image of the face $\{kn+1,kn+2,(k+1)n+i\}$ under $\phi$ must again be a face of the surface $Y^{(n,k)}$. Examining the vertex degrees of the surfaces leads to $\phi(kn+1)=kn+i$ or $\phi(kn+1)=kn+i+1.$

If $\phi(kn+1)=kn+i,$ then $\phi(kn+2)=kn+i+1$ and the restriction of $\phi$ to the vertices of the vertices is given by 
\[
((1,\ldots,n)\ldots((k+2)n+1,\ldots,(k+3)n))^i.
\]
If $\psi(kn+1)=kn+i+1,$ then $\phi(kn+2)=kn+i$ and the restriction of $\phi$ to the vertices of the surface is given by 
\[
((1,\ldots,n)\ldots((k+2)n+1,\ldots,(k+3)n))^i\phi_S.
\]
whereby $\phi_S$ is an involution that arises from $S$ by embedding the symmetry group of the embedding into the automorphism group of the simplicial surface. 
Hence the automorphism group of $Y^{(n,k)}$ is dihedral.
\end{proof}

\section{Acknowledgments}
The authors gratefully acknowledge the funding by the Deutsche Forschungsgemeinschaft (DFG, German Research Foundation) in the framework of the Collaborative Research Centre CRC/TRR 280 “Design Strategies for Material-Minimized Carbon Reinforced Concrete Structures – Principles of a New Approach to Construction” (project ID 417002380).
The second author was partially supported by the FY2022 JSPS Postdoctoral Fellowship for Research in Japan (Short-term), ID PE22747. The authors thank Alice C. Niemeyer for useful comments and fruitful discussions.
\newpage
\printbibliography
\end{document}

\typeout{get arXiv to do 4 passes: Label(s) may have changed. Rerun}